\newtheorem{thm}{Theorem}[section]
\newtheorem{cor}[thm]{Corollary}
\newtheorem{lem}[thm]{Lemma}
\newtheorem{prop}[thm]{Proposition}
\theoremstyle{definition}
\theoremstyle{remark}
\newtheorem{rem}[thm]{\bf Remark}
\numberwithin{equation}{section}
\newtheorem*{thm*}{\bf Theorem A}
\newtheorem*{thm**}{\bf Theorem B}
\newtheorem*{thm***}{\bf Theorem C}
\newtheorem{exa}[thm]{\bf Example}
\theoremstyle{definition}
\newtheorem{df}[thm]{Definition}
\newtheorem*{ques*}{Question}
\numberwithin{equation}{section}
\newcommand{\Ext}{\mbox{\rm Ext}}
\newcommand{\Hom}{\mbox{\rm Hom}}
\newcommand{\Z}{\mbox{\rm Z}}
\newcommand{\Ker}{\mbox{\rm Ker}}
\newcommand{\rep}{\mbox{\rm Rep}}
\newtheorem{nota}[thm]{Notation}
\begin{document}
\title[Homological theory of representations]{Homological theory of representations having pure acyclic injective resolutions}

\dedicatory{}%
\commby{}%
\vspace{-5mm}
\author[Qihui Li, Junpeng Wang, Gang Yang] {Qihui Li, Junpeng Wang, Gang Yang$^*$ }

\thanks{\emph{2020 Mathematics Subject Classification}: 16G20,  18A40, 18G05, 18G20.}

\thanks{$^*$ Corresponding author.}

\keywords{Injective representation, strongly fp-injective representation, Gorenstein injective representation, model structure.}%

\maketitle

\vspace{-10mm}

\begin{abstract}
Let $Q$ be a quiver and $R$ an associative ring. A representation by $R$-modules of $Q$ is called
strongly fp-injective if it admits a pure acyclic injective resolution in
the category of representations.
It is shown that such representations possess many nice properties.
We characterize  strongly fp-injective representations under some mild  assumptions, which  is
closely related to strongly fp-injective $R$-modules.
Subsequently, we use such representations  to define relative Gorenstein injective representations,
called Gorenstein strongly fp-injective representations,
and give an explicit characterization of  the  Gorenstein strongly fp-injective representations of {certain} right rooted quivers.
As an application, a model structure in  the category of representations is given.
\end{abstract}

\section{Introduction}
The study of relation between  representations  and their components  is a traditional and  important
research subject in the theory of homological algebra. The point of this paper aims at studying how the homological theory on the
category $R$-Mod of modules related to strongly fp-injective and Gorenstein strongly fp-injective
modules can be extended to a homological theory on the category  of representations.
To introduce the research work that inspired us, we first introduce some needed notations.
Throughout the paper,  let $R$ be an associative ring with
identity, and $Q=(Q_0, Q_1)$ a quiver, where $Q_0$ and $Q_1$ are the sets of  all vertices and arrows in $Q$, respectively.
Denote by $\text{Rep}(Q, R)$ the category of representations by $R$-modules of $Q$.  For any
object $X$ in $\text{Rep}(Q, R)$ and any vertex $i\in Q_0$, there are two induced
canonical homomorphisms
$$\psi_i^X: X(i)\to\prod_{a: i\rightarrow j}X(j) \hspace{0.5cm} \text{and} \hspace{0.5cm} \varphi_i^X: \displaystyle\bigoplus_{a: j\rightarrow i}X(j)\to X(i),$$ where $a\in Q_1$ is the arrow.

Enochs, Estrada and  Garc\'{\i}a Rozas gave in \cite[Theorem 4.2]{EEGR09} and in \cite[Theorem 3.1]{EE05} the nice classification for injective and projective representations over right and left rooted quivers, respectively.  They showed that a representation $X$ in $\text{Rep}(Q, R)$ of a right rooted  (respectively,
left rooted) quiver $Q$  is injective (respectively, projective) if and only if  the canonical homomorphism $\psi_i^X$ (respectively,
$\varphi_i^X$) is a splitting epimorphism (respectively,
splitting monomorphism), and the left $R$-module $X(i)$ is  injective (respectively,
projective)  for each vertex  $i\in Q_0$. Analogously, a characterization of flat representations
 (colimits of projective representations) has been given by Enochs, Oyonarte and
Torrecillas in \cite[Theorem 3.7]{EOT04}, that is, a representation $X$ in $\text{Rep}(Q, R)$ of a left rooted  quiver $Q$  is flat if and only if  the canonical homomorphism $\varphi_i^X$ is a pure monomorphism, and the left $R$-module $X(i)$ is  flat for each vertex  $i\in Q_0$.

According to
\cite{Sten70}, an $R$-module $L$ is said to be fp-injective if $\Ext^1_R(P, L)=0$ for every finitely presented $R$-module $P$, or equivalently, if every exact sequence
$0\to L\to M\to N\to 0$ of $R$-modules is pure. In this sense, fp-injective modules are also known as absolutely pure modules. We notice that an $R$-module $N$ is flat if and only if every exact sequence $0\to L\to M\to N\to 0$ of $R$-modules is pure, and so fp-injective modules are  often regarded as dual analogues of flat modules. It is well-known that the class of all flat $R$-modules is closed under  taking kernels of epimorphisms over any associative ring.  However, the class of all fp-injective $R$-modules isn't closed under  taking cokernels of monomorphisms over an associative ring in general, in fact, it is  closed under taking cokernels of monomorphisms if and only if $R$ is left coherent. To remedy the failure of the class of all fp-injective $R$-modules to be closed under  taking cokernels of monomorphisms  without the coherent condition for the ring, Emmanouil and Kaperonis \cite{EK}, and  Li, Guan and Ouyang \cite{LGO}  paid special attention to the narrower class of strongly fp-injective modules, where an $R$-module $L$ is called \emph{strongly fp-injective} if $\Ext^n_R(P, L)=0$ for any finitely presented $R$-module $P$ and all $n\geq 1$ (\cite[Definition 3.1]{LGO}).

We are inspired in this paper to investigate strongly fp-injective representations in the category $\text{Rep}(Q, R)$, and we will show that such representations share many nice homological properties.
We use the pure exact sequence in the category of representations of quivers to give its definition. Specifically, we call a representation $X$ \emph{strongly fp-injective} if it admits a pure acyclic injective resolution  in the category $\text{Rep}(Q, R)$. This program of the definition ensures that the class of all strongly fp-injective representations behaves nicely without the coherent condition for the ring.  In section 3, we succeed in getting the following result:
\begin{thm*} (Propositions \ref{closeprop}, \ref{closeprop1}, \ref{closed p1}, \ref{Stability}) The following statements hold for any quiver $Q$:
   \begin{enumerate}
     \item The class of all strongly fp-injective representations is closed under
extensions, finite direct sums and direct summands.
     \item The class of all strongly fp-injective representations is closed under
taking cokernels of monomorphisms.
    \item Let  $0\rightarrow X\rightarrow Y\rightarrow Z\rightarrow 0$ be a pure exact sequence of  representations.
If $Y$ and $Z$ are strongly fp-injective, then so is $X$.
     \item A representation $X$ is strongly fp-injective if and only if there exists a pure exact sequence $0 \to X \to T^0
\to T^1 \to \cdots$ with $T^i$ strongly fp-injective in $\text{Rep}(Q,R)$ for each integer $i\geq 0$.
   \end{enumerate}
\end{thm*}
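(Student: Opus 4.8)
The forward implication is immediate: if $X$ is strongly fp-injective then by definition it admits a pure acyclic injective resolution $0\to X\to E^0\to E^1\to\cdots$, which is in particular a pure exact sequence, and every injective representation is itself strongly fp-injective (its pure acyclic injective resolution being $0\to E\xrightarrow{\,\mathrm{id}\,}E\to 0\to\cdots$); so we may take $T^i=E^i$. The content is therefore the converse, and I would prove it by an inductive peeling argument that manufactures the desired resolution one injective term at a time.

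The plan is to reduce the converse to the following one-step statement: whenever $X$ admits a pure exact sequence $0\to X\to T^0\to T^1\to\cdots$ with all $T^i$ strongly fp-injective, there is a pure short exact sequence $0\to X\to E\to Y\to 0$ with $E$ injective such that $Y$ again admits a pure exact coresolution by strongly fp-injective representations. Granting this, I iterate: applying it to $X=:X^0$ produces $0\to X^0\to E^0\to X^1\to 0$, applying it to $X^1$ produces $0\to X^1\to E^1\to X^2\to 0$, and so on. Splicing these pure short exact sequences yields a complex $0\to X\to E^0\to E^1\to\cdots$ with each $E^i$ injective. Since a splice of pure short exact sequences remains exact after applying $\Hom(F,-)$ for every finitely presented $F$, this complex is pure acyclic, so $X$ is strongly fp-injective.

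To build the one-step sequence, first use that $T^0$ is strongly fp-injective to obtain the initial segment $0\to T^0\to E\to W\to 0$ of its pure acyclic injective resolution, so that $E$ is injective, the inclusion is pure, and $W$ is strongly fp-injective (by the closure under cokernels of monomorphisms in part (2), since $E$ is injective hence strongly fp-injective). Composing the pure monomorphisms $X\to T^0$ and $T^0\to E$ gives a pure monomorphism $X\to E$; set $Y=\Coker(X\to E)$, so $0\to X\to E\to Y\to 0$ is pure. Writing $C^1=\Coker(X\to T^0)$, the tail $0\to C^1\to T^1\to T^2\to\cdots$ is a pure exact coresolution of $C^1$ by strongly fp-injectives. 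Now form the pushout $P$ of $Y\leftarrow C^1\to T^1$ along the two inclusions, fitting into $0\to T^1\to P\to W\to 0$ and $0\to Y\to P\to C^2\to 0$, where $C^2=\Coker(C^1\to T^1)$. The first sequence exhibits $P$ as an extension of the strongly fp-injective $W$ by the strongly fp-injective $T^1$, so $P$ is strongly fp-injective by the closure under extensions in part (1); in the second, $Y\to P$ is a pure monomorphism because it is the cobase change of the pure monomorphism $C^1\to T^1$ and pure monomorphisms are stable under pushout. Splicing $0\to Y\to P\to C^2\to 0$ with the pure coresolution $0\to C^2\to T^2\to T^3\to\cdots$ of $C^2$ exhibits the required pure exact coresolution $0\to Y\to P\to T^2\to T^3\to\cdots$ of $Y$ by strongly fp-injectives.

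I expect the main obstacle to be bookkeeping the purity throughout the construction rather than any single hard idea: one must know that composites and pushouts of pure monomorphisms are again pure, and that splicing pure short exact sequences yields a pure acyclic complex (equivalently, that the spliced complex remains exact under $\Hom(F,-)$ for every finitely presented $F$). These are standard properties of purity in $\text{Rep}(Q,R)$, and once they are in hand the construction above goes through verbatim; the only places where membership in the class is invoked are the already-established closures under extensions and under cokernels of monomorphisms.
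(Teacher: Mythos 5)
Your proposal proves only item (4) of the four-part statement: items (1)--(3) are nowhere argued, and indeed your one-step construction consumes (1) (closure under extensions, to see that the pushout $P$ is strongly fp-injective) and (2) (to see that $W$ is strongly fp-injective) as black boxes, while (3) is never mentioned. This is not circular --- the paper itself proves Propositions \ref{closeprop} and \ref{closeprop1} first and then cites \ref{closeprop} inside the proof of Proposition \ref{Stability} --- but as a proof of the full statement it is incomplete: the closure properties (1)--(3) carry real content (the paper establishes them by push-out and Snake Lemma arguments that deliberately avoid computing any homology), and they must be in place before your argument for (4) can run.

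For item (4) itself your argument is correct and is essentially the paper's own proof of Proposition \ref{Stability}, repackaged as a one-step lemma plus recursion: your $E$, $Y=\Coker(X\to E)$, $W$, $C^1$ and the pushout $P$ of $Y\leftarrow C^1\to T^1$ are precisely the paper's $I^0$, $U^1$, $S^1$, $X^1$ and $J^1$, and both proofs rest on the same two facts --- $P$ is strongly fp-injective because $0\to T^1\to P\to W\to 0$ exhibits it as an extension of strongly fp-injectives, and $0\to Y\to P\to C^2\to 0$ is again pure. The only genuine difference is how this last purity is justified: you invoke stability of pure monomorphisms under cobase change, whereas the paper applies $S\otimes_Q-$ together with the Horse-Shoe Lemma; both are valid. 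One point to tighten: you verify pure acyclicity of the spliced complex via $\Hom(F,-)$ for finitely presented $F$, but the paper's definition of purity in $\text{Rep}(Q,R)$ is exactness under $S\otimes_Q-$ for all $S\in\text{Rep}(Q^{\mathrm{op}},R^{\mathrm{op}})$; the two criteria do agree in this locally finitely presented category, but it is cleaner (and immediate) to check the splice with the tensor criterion, since tensoring each pure short exact piece and re-splicing preserves exactness.
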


It is worth to point out that our proof of the above result is very  ingenious since we use only the purity and the characterization of injective representations of quivers instead of computing the  homology. In this way, we even don't care about what does a finitely presented representation in $\text{Rep}(Q, R)$ look like. However, in the category of $R$-modules, to show that an $R$-module $M$ is strongly fp-injective, one usually needs to prove that the homology $\Ext^i_R(N,M)$ vanishes for any finitely presented $R$-module $N$ and any integer $i\geq 1$.

To clarify the structure of  strongly fp-injective representations, we prove that such representations are closely connected to strongly fp-injective modules as the following:

\begin{thm**} (Theorems \ref{strongly FP},  \ref{strongly FP right}) Let $Q$ be a  {locally target-finite} quiver and $X$
a  representation in $\text{Rep}(Q, R)$.   If  $X$ is strongly fp-injective, then  the $R$-module $X(i)$ is strongly fp-injective, and
    the canonical homomorphism $\psi_i^X$  is a pure epimorphism for each vertex  $i\in Q_0$. The converse also holds
    provided that $Q$ is right rooted.
 \end{thm**}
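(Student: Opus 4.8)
The plan is to pass between $\rep(Q,R)$ and $R$-Mod through the vertexwise functors and to detect both conditions on $X$ by testing the purity of an injective resolution against a small, explicit family of finitely presented representations. For a vertex $i$, write $e_i\colon\rep(Q,R)\to R\text{-Mod}$ for evaluation, $\ell_i$ for its left adjoint (so $\ell_i(P)(k)=\bigoplus_{p\colon i\to k}P$, the sum over paths, and $\Hom(\ell_i(P),Y)\cong\Hom_R(P,Y(i))$), and $s_i(P)$ for the stalk representation ($P$ at $i$, zero elsewhere). Since $Q$ is locally target-finite the product defining $\psi_i^X$ is finite, $\prod_{a\colon i\to j}X(j)=\bigoplus_{a\colon i\to j}X(j)$. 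For the necessity of the first conclusion: $e_i$ is exact, preserves direct limits, and has the exact left adjoint $\ell_i$; hence it preserves injectives and carries pure exact sequences to pure exact sequences. Applying $e_i$ to a pure acyclic injective resolution $0\to X\to E^0\to E^1\to\cdots$ thus yields a pure acyclic injective resolution of $X(i)$, so $X(i)$ is strongly fp-injective.

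For the necessity of ``$\psi_i^X$ is a pure epimorphism'' I would read it off the purity of the resolution in $\rep(Q,R)$, not of its evaluation. The crucial identity is $\Hom(s_i(P),Y)\cong\Hom_R(P,\Ker\psi_i^Y)$, valid for every $Y$, together with the fact that $s_i(P)$ is finitely presented whenever $P$ is: indeed $s_i(P)=\Coker\big(\bigoplus_{a\colon i\to j}\ell_j(P)\hookrightarrow\ell_i(P)\big)$, and the coproduct is finite precisely because $Q$ is locally target-finite. Applying $\Hom(s_i(R),-)$ to the pure exact sequence $0\to X\to E^0\to Z^1\to0$ (with $Z^1$ the first cosyzygy) therefore produces an exact sequence $0\to\Ker\psi_i^X\to\Ker\psi_i^{E^0}\to\Ker\psi_i^{Z^1}\to0$; in particular $\Ker\psi_i^{E^0}\to\Ker\psi_i^{Z^1}$ is onto. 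Feeding this into the snake lemma for the natural transformation $\psi_i$ across $0\to X\to E^0\to Z^1\to0$, and using that $\psi_i^{E^0}$ is an epimorphism because $E^0$ is injective (the easy half of the injective characterization, valid over any quiver), forces $\Coker\psi_i^X=0$. Running the same test with an arbitrary finitely presented $P$ in place of $R$, and comparing with the $\ell_i(P)$- and $\ell_j(P)$-tests, shows that $0\to\Ker\psi_i^X\to X(i)\to\bigoplus_{a\colon i\to j}X(j)\to0$ is pure; that is, $\psi_i^X$ is a pure epimorphism. I expect this translation to be the main obstacle of the necessity part, and it is exactly where local target-finiteness is indispensable, since it is what makes the stalks $s_i(P)$ legitimate (finitely presented) test objects.

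For the converse I would build a pure acyclic injective coresolution of $X$ one cosyzygy at a time and conclude by Theorem A(4). For each $i$ choose a pure exact sequence $0\to X(i)\to I_i\to X(i)'\to0$ of modules with $I_i$ injective and $X(i)'$ strongly fp-injective. Using that $Q$ is right rooted, assemble an injective representation $E^0$ by recursion along the well-founded rooted order, setting $E^0(i)=I_i\oplus\bigoplus_{a\colon i\to j}E^0(j)$ with the structure maps the projections; then $\psi_i^{E^0}$ is a split epimorphism and $E^0(i)$ is injective (a finite direct sum of injectives, by local target-finiteness), so $E^0$ is injective by the Enochs--Estrada--Garc\'{\i}a Rozas characterization. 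Map $X\to E^0$ by $x\mapsto(\iota_i x,\ \psi_i^X x)$ at vertex $i$, the second coordinate landing in $\bigoplus_{a\colon i\to j}E^0(j)$ via the inclusions $X(j)\hookrightarrow E^0(j)$; this is a monomorphism of representations. Its cokernel $Z^1$ again satisfies the two hypotheses: $Z^1(i)$ is strongly fp-injective (cokernel of a pure monomorphism with injective middle term, by the module form of Theorem A(2)), and a snake-lemma chase for $\psi_i$ across $0\to X\to E^0\to Z^1\to0$, using that $\psi_i^{E^0}$ splits and $\psi_i^X$ is a pure epimorphism, shows $\psi_i^{Z^1}$ is again a pure epimorphism. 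Iterating yields $0\to X\to E^0\to E^1\to\cdots$ with every $E^k$ injective.

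The delicate point of the converse is that this coresolution must be pure in $\rep(Q,R)$, not merely vertexwise, and here right-rootedness re-enters decisively. Combined with local target-finiteness it forces finitely many paths between any two vertices (K\"onig's lemma), so each $\ell_i(P)$ with $P$ finitely presented is itself finitely presented; together with the stalks $s_i(P)$ these generate the finitely presented objects, whence purity in $\rep(Q,R)$ is detected by exactly the two families used in the necessity part, namely vertexwise purity (from the $\ell_i(P)$) and pure-epimorphy of $\psi_i$ on every cosyzygy (from the $s_i(P)$). Both are guaranteed along the construction. The hardest bookkeeping I anticipate is verifying that the stalk-tests persist simultaneously at all vertices through every cosyzygy, i.e.\ that ``$\psi_i$ is a pure epimorphism'' propagates uniformly in $i$; this is precisely the content of the repeated snake-lemma step and is the crux of the sufficiency direction. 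Granting it, the coresolution is pure acyclic, and Theorem A(4) gives that $X$ is strongly fp-injective.
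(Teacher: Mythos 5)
Your necessity argument is workable and takes a genuinely different route from the paper: the paper proves Theorem~\ref{strongly FP}(2) by an explicit computation with duals, constructing a splitting of $(\psi_v^X)^+$ from a splitting of $\psi_v^I$ and of $f^+$, whereas you test the pure resolution against the finitely presented objects $\ell_i(P)$ and $s_i(P)$ and run a snake-lemma/lifting chase; that chase can indeed be completed. But it rests on a bridge you use silently in both directions: the paper \emph{defines} purity by exactness of $S\otimes_Q-$, equivalently by splitting of the dualized sequence (Lemma~\ref{pure}), while you treat purity as $\Hom_Q(F,-)$-exactness for finitely presented $F$. These two notions do coincide for functor categories such as $\text{Rep}(Q,R)$, but this equivalence is nontrivial and is nowhere established in the paper's framework; a complete proof must prove it or cite it explicitly.

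The genuine gap is in the converse. After building the coresolution, everything hinges on your claim that the families $\{\ell_i(P)\}$ and $\{s_i(P)\}$ ``generate the finitely presented objects,'' so that purity in $\text{Rep}(Q,R)$ is detected by vertexwise purity plus the stalk tests. This is unproved, and it does not follow formally even granting the bridge above: $\Hom$-exactness of a short exact sequence against $A$ and against $B$ does not imply $\Hom$-exactness against an extension $C$ of $B$ by $A$ (the obstruction lives in $\Ext^1_Q(B,X)$ and need not be hit by classes of the form $\beta^*\eta$), and general finitely presented representations are exactly such iterated extensions --- over $\bullet\to\bullet$, for instance, $(R\xrightarrow{\,r\,}R)$ is a non-split extension of $s_1(R)$ by $s_2(R)$ when $r\neq 0$. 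Establishing that each step $0\to X\to E^0\to Z^1\to 0$ is pure \emph{as a sequence of representations} (not merely vertexwise --- the paper's example at the end of Section~2 shows the difference is real) is precisely the hard part of the sufficiency, and the paper's proof of Theorem~\ref{strongly FP right} consists entirely of it: a transfinite induction along the right-rooted filtration $\{W_\alpha\}$ building a splitting $\gamma\colon E^+\to C^+$ vertex by vertex, with Lemma~\ref{split diagram} providing the successor step and local target-finiteness entering through $[\prod_{a}C(t(a))]^+\cong\bigoplus_a C^+(t(a))$. In your proposal the only transfinite recursion occurs in constructing $E^0$, and right-rootedness is invoked only via K\"onig's lemma to make test objects finitely presented; you then identify the crux as propagating the stalk tests through the cosyzygies, but that is the easy part (a snake-lemma argument, both here and in the paper). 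A correct completion along your lines would still have to run a vertex-by-vertex transfinite induction --- constructing either lifts of maps from finitely presented representations or splittings of the dualized sequences --- i.e., it would have to reproduce the substance of the paper's proof rather than replace it.
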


In section 4, we consider the  relative Gorenstein injective representations and introduce  the  Gorenstein strongly fp-injective representations. We
give an explicit characterization of  the  Gorenstein strongly fp-injective representations of certain right rooted quivers. More
precisely, we obtain the following result.

\begin{thm***} (Theorem \ref{main Gorens}) Let $Q$ be a   {locally target-finite and} right rooted quiver. Then a representation $X$ in $\text{Rep}(Q, R)$ is Gorenstein strongly fp-injective  if and only if the $R$-modules $X(i)$ and $\Ker(\psi_i^X)$ are  Gorenstein strongly fp-injective, and the canonical morphism $\psi_i^X$  is  epimorphic for each vertex  $i\in Q_0$.
\end{thm***}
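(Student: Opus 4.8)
The plan is to prove both implications by reducing statements about the representation $X$ to statements about its vertex modules through the evaluation functors $e_i\colon \rep(Q,R)\to R\text{-}\Mod$, $e_i(X)=X(i)$, which are exact, together with their left and right adjoints $f_i\dashv e_i\dashv g_i$; throughout, the engine is Theorem B, which says that over our (locally target-finite, right rooted) quiver a representation is strongly fp-injective precisely when each vertex module is strongly fp-injective and each $\psi_i^X$ is a pure epimorphism. Recall that $X$ is Gorenstein strongly fp-injective when it is a cocycle $X=\Ker d^0$ of a complete strongly fp-injective resolution, i.e. an exact complex $\mathbf{S}\colon\cdots\to S^{-1}\xrightarrow{d^{-1}}S^0\xrightarrow{d^0}S^1\to\cdots$ of strongly fp-injective representations which remains exact after applying $\Hom_{\rep(Q,R)}(W,-)$ for every strongly fp-injective representation $W$.

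\emph{Forward direction.} Starting from such a complex $\mathbf S$, I would first apply the exact functor $e_i$. By Theorem B each $S^n(i)$ is a strongly fp-injective $R$-module, so $e_i\mathbf S$ is an exact complex of strongly fp-injective modules with cocycles $e_iZ^n(\mathbf S)$; in particular $X(i)=Z^0(e_i\mathbf S)$. Again by Theorem B each $\psi_i^{S^n}$ is a pure epimorphism, so for every $n$ there is a pure exact sequence $0\to\Ker\psi_i^{S^n}\to S^n(i)\xrightarrow{\psi_i^{S^n}}\prod_{a\colon i\to j}S^n(j)\to0$, where local target-finiteness makes the product finite and hence $\prod_{a\colon i\to j}e_j$ exact. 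These assemble into a short exact sequence of complexes $0\to\mathbf K\to e_i\mathbf S\to(\prod_{a\colon i\to j}e_j)\mathbf S\to0$ whose three terms are exact and whose outer terms consist of strongly fp-injective modules; the module analogue of Theorem A(3) then shows each $\Ker\psi_i^{S^n}$ is strongly fp-injective, and passing to cocycles of this sequence of exact complexes shows simultaneously that $\psi_i^X$ is epic and that $\Ker\psi_i^X=Z^0(\mathbf K)$. Thus $X(i)$ and $\Ker\psi_i^X$ appear as cocycles of exact complexes of strongly fp-injective modules, and it remains to upgrade these to complete resolutions, that is, to verify $\Hom_R(L,-)$-exactness for strongly fp-injective $L$; this is done by transporting the orthogonality of $\mathbf S$ across the adjunction $f_i\dashv e_i$, which is where the first real work appears.

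\emph{Backward direction.} Assume each $X(i)$ and each $\Ker\psi_i^X$ is Gorenstein strongly fp-injective and each $\psi_i^X$ is epic; I must manufacture a complete strongly fp-injective resolution of $X$. The key observation is that the right adjoint $g_i$ sends a strongly fp-injective module $M$ to a strongly fp-injective representation: its components $\prod_{p\colon j\to i}M$ are products of copies of $M$ (strongly fp-injective modules being closed under products) and the maps $\psi_j^{g_iM}$ are split epimorphisms, so Theorem B applies. Feeding module-level complete resolutions of the $X(i)$ through the $g_i$, together with the canonical monomorphism $X\to\prod_i g_i(X(i))$ coming from the unit of $e_i\dashv g_i$, produces strongly fp-injective representations enveloping $X$; the canonical short exact sequence $0\to K\to X\to \bar X\to0$, where $K$ is the subrepresentation with $K(i)=\Ker\psi_i^X$ and zero structure maps and $\bar X(i)=\prod_{a\colon i\to j}X(j)$, lets me organize a horseshoe-type construction so that the cokernels arising at each stage again have the componentwise Gorenstein form with epic $\psi$. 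Transfinite induction along the well-founded order supplied by right-rootedness then builds the right half $0\to X\to S^0\to S^1\to\cdots$, a symmetric argument builds the left half, and splicing yields $\mathbf S$; Theorem B guarantees strongly fp-injectivity at every spot, Theorem A the requisite closure properties, and the orthogonality of $\mathbf S$ against a strongly fp-injective $W$ is checked by evaluation and the componentwise completeness.

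I expect the main obstacle to be this backward assembly of a genuinely totally acyclic complex of strongly fp-injective representations out of purely componentwise data, because two things must be controlled at once: that every representation built is strongly fp-injective, which by Theorem B forces verification of the pure-epimorphism condition on each $\psi$ and is exactly where local target-finiteness and the finiteness of the relevant products are indispensable; and that global $\Hom_{\rep(Q,R)}(W,-)$-exactness survives the construction. Reconciling these through the transfinite induction permitted by right-rootedness is the crux. A secondary but genuine technical point, already visible in the forward direction, is the transport of $\Hom$/$\Ext$-orthogonality between $R\text{-}\Mod$ and $\rep(Q,R)$ along the triple $f_i\dashv e_i\dashv g_i$: since the left adjoint $f_i$ does not itself preserve strong fp-injectivity, the orthogonality must be argued with more care than a naive application of adjunction would suggest.
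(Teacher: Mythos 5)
Your proposal leaves its two hardest steps unresolved, and both are genuine gaps rather than routine technicalities. In the forward direction, the entire content of the theorem is the step you defer: verifying $\Hom_R(L,-)$-exactness of $e_i\mathbf{S}$ and of $\mathbf{K}$ for strongly fp-injective modules $L$. Your proposed mechanism, transporting orthogonality along $f_i\dashv e_i$, fails for exactly the reason you concede: $f_i(L)$ is not strongly fp-injective, and no substitute argument is given. The paper's substitute is the stalk functor, not the left adjoint: $s_i(F)$ sits in an exact sequence $0\to s_i(F)\to e^i_{\rho}(F)\to e^i_{\rho}(F)/s_i(F)\to 0$ whose two right-hand terms \emph{are} strongly fp-injective representations (Proposition \ref{sub-quiver fp} and Corollary \ref{strongly ccor}), which forces $\Hom_Q(s_i(F),\mathbb{I})$ to be acyclic; then \cite[Proposition 3.10]{O19} converts this into $\Ext^1_R(F,\Ker(\psi^X_i))\cong\Ext^1_Q(s_i(F),X)=0$, handling $\Ker(\psi_i^X)$, and a transfinite induction over the rooted filtration $\{W_\alpha\}$, using the degreewise split sequence of complexes $0\to\mathbb{K}(j)\to\mathbb{I}(j)\to\prod_{a}\mathbb{I}(t(a))\to 0$, handles $X(j)$. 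Note that the transfinite induction belongs to the \emph{forward} implication, where your sketch has none. A further problem in both directions: you recall the wrong definition. Gorenstein strongly fp-injective means cocycle of an acyclic complex of \emph{injective} representations (respectively modules) that stays exact under $\Hom_Q(J,-)$ for strongly fp-injective $J$; your complexes have merely strongly fp-injective terms, so even after your "upgrade" the vertex modules would not be exhibited as members of $\mathcal{GI}$ in the paper's sense without an additional (nontrivial) comparison of the two definitions.

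In the backward direction your plan to hand-build a totally acyclic complex (pushing resolutions of the $X(i)$ through $g_i$, a horseshoe argument, transfinite splicing) founders on the verification you again defer: global $\Hom_Q(W,-)$-exactness cannot be "checked by evaluation and the componentwise completeness," because $\Hom_Q(W,-)$ is not computed vertexwise; it is a limit of the vertexwise Hom groups, and vertexwise exactness of $\Hom_R(W(i),S(i))$ gives no control over it. The paper avoids any construction whatsoever. By \cite[Proposition 24]{GI22} and \cite[Theorem B]{HP19}, the pair $(\text{Rep}(Q,\mathcal{W}),\Psi(\mathcal{GI}))$ is a hereditary cotorsion pair (Lemma \ref{cotorsion pair}), so $X\in\Psi(\mathcal{GI})$ satisfies $\Ext^{i\geq 1}_Q(J,X)=0$ for every strongly fp-injective representation $J$ (such $J$ lies in $\text{Rep}(Q,\mathcal{W})$ by Theorem \ref{strongly FP}(1)); since $X$ is in particular Gorenstein injective, \cite[Theorem 3.5.1]{EHS13} supplies an exact complex $\cdots\to I_1\to I_0\to X\to 0$ of injective representations with Gorenstein injective kernels, and a dimension-shifting computation $\Ext^1_Q(J,X_1)\cong\Ext^2_Q(J^1,X_1)\cong\Ext^1_Q(J^1,X)=0$ shows this resolution is $\Hom_Q(J,-)$-exact, whence \cite[Lemma 17]{GI22} finishes. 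So the cotorsion-pair machinery dissolves precisely the obstacle you identify as the crux; as written, your backward direction is a program whose essential steps remain unfilled.
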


It is known that a ring $R$ is left coherent if and only if any  fp-injective $R$-module is  strongly fp-injective. Thus it is particularly interesting to note that the above result reveals incidently the behavior of the Ding injective representations of $R$-modules over left coherent rings, which is given in Corollary \ref{Ding injective}, and then this helps us to improve the model structure in \cite[Proposition 5.10]{BGH}  from the category $R$-Mod to the category $\text{Rep}(Q, R)$  when $R$ is restricted to a left coherent ring, see Corollary \ref{Ding injective Model}.

\section{preliminaries}

Throughout the paper $R$ denotes an associative ring with  identity. An $R$-module will
mean a left $R$-module, unless stated otherwise. We also refer to right $R$-modules as modules over the opposite ring $R^{op}$.
The category of $R$-modules will be denoted $R$-Mod. In this section, we recall some notions and facts which will be used throughout the paper.

\subsection*{Quiver representations}
A quiver, denoted by $Q=(Q_0, Q_1)$, is a directed graph, where $Q_0$ and $Q_1$ are the set of vertices and arrows, respectively.
For $i, j\in Q_0$ (not necessarily different), $Q(i, j)$ denotes the set of paths in $Q$ from $i$ to $j$.
For every $i\in Q_0$, $e_i$ is the trivial path. If $p\in Q(i, j)$, we write $s(p)=i$ and $t(p)=j$, called its source and target,
respectively. Thus, one can think of $Q $ as a category with the object class $Q_0$ and $\Hom_Q(i, j):=Q(i, j)$.

Let $Q$ be a quiver, and $R$ a ring. A representation $X$ of $Q$ by $R$-modules is  a covariant functor $X: Q\to R\text{-Mod}$.
Thus a representation $X$ is determined by assigning an $R$-module
 $X(i)$ to each vertex $i\in Q_0$ and an $R$-homomorphism $X(a): X(i)\to X(j)$ to each arrow $a\in Q_1$.
 A morphism $f: X\to Y$ between representations $X$ and $Y$ is
a natural transformation, that is, a family of  homomorphisms $\{f(i): X(i)\to Y(i)\}_{i\in Q_0}$ such that
$Y(a)\circ f(i)=f(j)\circ X(a)$, i.e.,  the following diagram is commutative
$$\xymatrix{X(i)\ar[d]_{f(i)} \ar[r]^{X(a)} & X(j) \ar[d]^{f(j)} \\
  Y(i) \ar[r]^{Y(a)} & Y(j)   }$$  for each arrow $a: i\to j$ in $Q_1$. A
representation $P$ of $Q$ is said to be finitely presented if the functor  $\Hom_Q(P, -)$
presevers direct limits.  The category of representations of a quiver $Q$ by
$R$-modules over a ring $R$ is denoted by $\text{Rep}(Q, R)$. This is a locally finitely presented additive
category, see \cite{AR94, CB94}, actually it is a Grothendieck
category with enough projectives  and injectives. It is known that if  $Q$ is finite, i.e., $Q_0$ and $Q_1$ are both finite, then the category $\text{Rep}(Q, R)$ is
equivalent to the category of $RQ$-modules, where $RQ$ is the path ring of $Q$. For the
two representations $X$ and $Y$ of $Q$, we write $\Hom_Q(X,Y)$ for the abelian group of
morphisms from $X$ to $Y$ in the category $\text{Rep}(Q, R)$.

\subsection*{Right adjoint of the restriction functor}
Let $Q$ be a quiver and $Q'=(Q'_0, Q'_1)$ be its subquiver. According to \cite{AEHS11}, the \emph{restriction functor}
$$e^{Q'}: \text{Rep}(Q, R)\longrightarrow\text{Rep}(Q', R)$$  which restricts any representation of
$Q$ to the vertices of $Q'$ is known to possess the \emph{right adjoint}
$$e^{Q'}_{\rho}: \text{Rep}(Q', R)\longrightarrow\text{Rep}(Q, R)$$
given as the following description  for any representation $X\in\text{Rep}(Q', R)$, if $v \in Q'_0$,
then
$$e^{Q'}_{\rho}(X)(v)=X(v)\bigoplus\displaystyle\prod_{\begin{array}{c}s(a\alpha)=v \\
                                                                  t(a\alpha)\in Q'\\
                                                                 a\not\in Q'
                                                                 \end{array}} X({t(a\alpha)}),$$
and if  $v \not\in Q'_0$, then
$$e^{Q'}_{\rho}(X)(v)=\displaystyle\prod_{\begin{array}{c}  s(a\alpha)=v \\
                                                t(a\alpha)\in Q'\\
                                                 a\not\in Q'
                                                 \end{array}} X({t(a\alpha)}),$$
where the symbols $\alpha$ and $a$, stand respectively, for a path and an arrow with $t(\alpha)=s(a)$. The arrows in $e^{Q'}_{\rho}(X)$ are represented
by the natural projections and if $g: X \to Y$ is a morphism in $\text{Rep}(Q', R)$, then $e^{Q'}_{\rho}(g)$ is just the
obvious map. In particular, if $Q'=\{v\}\subseteq Q$, then for any $R$-module $M$ and any vertex $w\in Q_0$,
$$ e^{v}_{\rho}(M)(w)= \prod_{Q(w,v)} M.$$

\subsection*{The stalk functor} Let $Q$ be a quiver, and $i\in Q_0$ a vertex. The $i$th \emph{stalk functor} $$s_i: R\text{-Mod}\to \text{Rep}(Q, R)$$
is defined as
$$s_i(M)(j):=\left\{
            \begin{array}{ll}
              M, & j=i; \\
             \ 0, & j\neq i.
            \end{array}
          \right.
$$
for each vertex $j\in Q_0$, and
$s_i(M)(a)=0$ for any arrow $a\in Q_1$.

For a given vertex $i$ in a quiver $Q$, we denote
by $Q_1^{i\to *}$ (respectively  $Q_1^{*\to i}$) the set of arrows in $Q$ whose source (respectively  target) is the vertex $i$, that is,
$$Q^{i\rightarrow\ast}_1:=\{a\in Q_1| s(a)=i\} \hspace{0.6cm} \text{and} \hspace{0.6cm} Q^{\ast\rightarrow i}_1:=\{a\in Q_1| t(a)=i\} \eqno{(2.1)}$$
From now on, the letter $Q$ will denote a quiver with the sets of vertices and arrows $Q_0, Q_1$, respectively.
Let $X\in\rep(Q, R)$ be a representation. By the universal property of coproducts, there is a unique homomorphism
$$\varphi_i^X: \bigoplus_{a\in Q^{\ast\rightarrow i}_1}X(s(a))\longrightarrow X(i).  \eqno{(2.2)}$$

Dually, by the universal property of products,  there is a unique homomorphism
$$\psi_i^X: X(i)\longrightarrow\prod_{a\in Q^{i\rightarrow\ast}_1}X(t(a)).  \eqno{(2.3)}$$

Given a subcategory $\mathcal{C}$ of $R$-Mod. We let
$$  \text{Rep}(Q, \mathcal{C})=\{X\in \text{Rep}(Q,R)\hspace{2mm}|\hspace{1mm} X(v)\in\mathcal{C} \text{ for each }v\in Q_0\};  \eqno{(2.4)} $$
  $$ \Psi(\mathcal{C})=\left \{X\in \text{Rep}(Q,R)\hspace{2mm}\begin{array}{ |l} \psi^X_v\text{ is an epimorphism, } X(v) \text{  and}\\
 \Ker(\psi^X_v)\in\mathcal{C} \text{ for each }v\in Q_0\end{array}\right\}.   \eqno{(2.5)} $$

\subsection*{Opposite quivers}
Let  $Q$ be a quiver. By $Q^{\text{op}}=(Q_0, Q_1^{\text{op}})$ we mean a quiver  with the
same set of vertices and the set of reversed arrows, i.e., if $a: i\to j$ is an arrow in $Q_1$, then
 $a^{\text{op}}: j\to i$ is the corresponding  arrow in $Q_1^{\text{op}}$.

\subsection*{Rooted quivers} Let $Q$ be a quiver. As in \cite[2.9]{HP19}, we consider
the transfinite sequence $\{W_{\alpha}\}_{\alpha\text{ }\mathrm{ ordinal}}$ of subsets of $Q_0$ defined as follows:
\begin{enumerate}
\item For the first ordinal $\alpha=0$, set $W_0=\emptyset$;
\item For a successor ordinal $\alpha+1$, set
$$W_{\alpha+1}=\left \{i\in Q_0 \hspace{0.2cm}
\begin{array}{ |l} i \text{ is not the source of any arrow} \\
 a \text{ in }  Q \text{\ with \ } t(a)\not\in\bigcup_{\gamma\leq\alpha}W_\gamma\end{array}\right\};
$$
\item For a limit ordinal $\alpha$, set $W_{\alpha}=\bigcup_{\beta<\alpha}W_{\beta}$.
\end{enumerate}
A quiver $Q$ is called \emph{right rooted} if there is some ordinal $\lambda$ such that $W_{\lambda}=Q_0$; equivalently,
if there is no infinite sequence of arrows of the form $\bullet\to\bullet\to\bullet\to\cdots$  in $Q$ (also see \cite{EEGR09}).
Dually, it follows from \cite[3.6]{EOT04} that a quiver $Q$ is \emph{left rooted} if and only if
it has no infinite sequence of arrows of the form $\cdots \to \bullet\to \bullet \to \bullet$.
Also see Holm and J{\o}rgensen \cite[2.5 and 2.6]{HP19} for the left and right rooted quivers.
We note that  a quiver $Q$ is right
rooted if and only if $Q^{\text{op}}$
is left rooted.


The following result, which is  dual to Holm and Jorgenson \cite[Lemma 2.7 and Corollary 2.8]{HP19}, provides an elegant characterization
of the properties about the transfinite sequence $\{W_{\alpha}\}_{\alpha\text{\ }\mathrm{ ordinal}}$ defined in a right rooted quiver.

\begin{prop}\label{rooted quiver property}
Let $Q$ be a quiver, and  $\{W_{\alpha}\}_{\alpha\text{ }\mathrm{ ordinal}}$ the transfinite sequence, of subsets of $Q_0$, defined as the above. Then the following statements hold:
\begin{enumerate}
  \item[(1)] The transfinite sequence $\{W_{\alpha}\}_{\alpha\text{ }\mathrm{ ordinal}}$ is ascending; that is,
  one has $W_\alpha\subseteq W_\beta$ for each pair of ordinals $\alpha, \beta$ with $\alpha<\beta$. In particular, we have $W_\beta=\bigcup_{\alpha\leq \beta}W_\alpha$  for each ordinal $\beta$.
  \item[(2)] Let $v, w\in Q_0$. If $w\not\in W_{\alpha}$  and $v\in W_{\alpha+1}$ (in particular, if $v\in W_{\alpha}$), then there is
  no arrow $a: v\to w$ belonging to $Q$.
\end{enumerate}
\end{prop}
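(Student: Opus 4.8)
The plan is to establish both statements by transfinite induction, with part~(1) doing essentially all of the work and part~(2) then dropping out almost immediately. It will be convenient to abbreviate $V_\alpha := \bigcup_{\gamma \leq \alpha} W_\gamma$, so that the successor rule reads: $i \in W_{\alpha+1}$ precisely when every arrow $a$ with $s(a) = i$ has $t(a) \in V_\alpha$. The real content of part~(1) is that $V_\alpha = W_\alpha$, i.e. that $W_\alpha$ is already the largest of the terms $W_\gamma$ with $\gamma \leq \alpha$; once this is known the successor rule simplifies to ``$t(a) \in W_\alpha$'', which is exactly the form part~(2) uses.

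For part~(1) I would prove, by strong transfinite induction on $\beta$, the statement $P(\beta)$: \emph{$W_\delta \subseteq W_\beta$ for every $\delta \leq \beta$.} The case $\beta = 0$ is vacuous since $W_0 = \emptyset$, and the case of $\beta$ a limit ordinal is immediate from the defining equality $W_\beta = \bigcup_{\gamma < \beta} W_\gamma$. The heart of the argument is the successor case $\beta = \alpha+1$, where it is enough to prove the single inclusion $W_\alpha \subseteq W_{\alpha+1}$. I would take $i \in W_\alpha$ and split on the type of $\alpha$. If $\alpha = \delta+1$ is a successor, then $i \in W_\alpha$ already forces every arrow out of $i$ to have target in $V_\delta \subseteq V_\alpha$, which is exactly the condition $i \in W_{\alpha+1}$. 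If $\alpha$ is a limit, then $i \in W_\gamma$ for some $\gamma < \alpha$; since $\gamma+1 < \alpha < \beta$, the hypothesis $P(\gamma+1)$ gives $W_\gamma \subseteq W_{\gamma+1}$, so $i \in W_{\gamma+1}$, and now every arrow out of $i$ has target in $V_\gamma \subseteq V_\alpha$, again placing $i$ in $W_{\alpha+1}$. The resulting monotonicity gives $W_\beta = V_\beta$, which is the ``in particular'' clause.

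Part~(2) is then a one-line contradiction. Assuming $w \notin W_\alpha$ and $v \in W_{\alpha+1}$, suppose there were an arrow $a : v \to w$. By part~(1) we have $V_\alpha = W_\alpha$, so $w \notin W_\alpha$ says $t(a) \notin V_\alpha$; but then $v = s(a)$ is the source of an arrow whose target misses $V_\alpha$, contradicting $v \in W_{\alpha+1}$. The parenthetical remark, that the same holds when $v \in W_\alpha$, follows because part~(1) gives $W_\alpha \subseteq W_{\alpha+1}$.

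The one point I expect to demand care is the bootstrapping inside the successor step of part~(1): because the successor rule is written with the cumulative union $V_\alpha$ rather than with $W_\alpha$ alone, proving $W_\alpha \subseteq W_{\alpha+1}$ when $\alpha$ is a limit requires first knowing one-step monotonicity at a strictly smaller ordinal $\gamma < \alpha$. This is exactly why I would run the induction in the strong cumulative form $P(\beta)$ rather than attempt the weaker-looking claim ``$W_\alpha \subseteq W_{\alpha+1}$ for all $\alpha$'' in isolation; with $P(\beta)$ in place the smaller-ordinal information is available at the moment it is needed, and the remaining verifications are routine.
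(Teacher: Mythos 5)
Your proof is correct. The paper gives no details of its own here---it simply notes that (1) is dual to \cite[Lemma 2.7]{HP19} and that (2) is clear---and the transfinite induction you carry out (with the cumulative statement $P(\beta)$ supplying the one-step monotonicity $W_\gamma\subseteq W_{\gamma+1}$ exactly where the limit case needs it) is precisely the dualized Holm--J{\o}rgensen argument the paper invokes, so your proposal takes essentially the same approach, just written out in full.
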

\begin{proof}
The proof of (1) is dual to that of \cite[Lemma 2.7]{HP19}, and (2) is clear.
\end{proof}

\subsection*{Tensor products of representations}
Inspired by the work of Salarian and Vahed \cite{SV16}, Di, Estrada, Liang and Odabasi \cite[2.1]{DELO23} gave the following definition
of tensor product functors in the category of representations.
For a given representation $X$ in $\text{Rep}(Q, R)$, and a $\mathbb{Z}$-module $G$. The representation
$\Hom(X, G)\in\text{Rep}(Q^{\text{op}}, R^{\text{op}})$ is constructed as follows:

\begin{enumerate}
     \item[$\bullet$] For each vertex $i\in Q^{\text{op}}_0$, set $\Hom(X, G)(i)=\Hom_{\mathbb{Z}}(X(i), G)$;
     \item[$\bullet$] For each arrow $a: i\to j$ in $Q_1^{\text{op}}$,  define $\Hom(X, G)(a)=\Hom_{\mathbb{Z}}(X(a^{\text{op}}), G):
     \Hom_{\mathbb{Z}}(X, G)(i)\to \Hom_{\mathbb{Z}}(X, G)(j),$  where $a^{\text{op}}: j\to i$ is an arrow in $Q$.
\end{enumerate}

It is clear that $\Hom(X, -)$ is a functor from the category $\mathbb{Z}$-Mod of all $\mathbb{Z}$-modules
 to $\text{Rep}(Q^{\text{op}}, R^{\text{op}})$. This functor is left exact and preserves arbitrary products,
so it has a left adjoint from $\text{Rep}(Q^{\text{op}}, R^{\text{op}})$ to $\mathbb{Z}$-Mod, which is denoted $-\otimes_QX$
and will play the role of the tensor product.

\begin{lem} (\cite[Theorem 2.2]{DELO23})\label{adjoint} Let $X$ be a representation in $\text{Rep}(Q, R)$ and $Y$ a representation in $\text{Rep}(Q^{\text{op}}, R^{\text{op}})$. Then
there is a natural isomorphism
$$\Hom_{\mathbb{Z}}(Y\otimes_Q X, G)\cong \Hom_{Q^{\text{op}}}(Y, \Hom(X, G))$$
for each $\mathbb{Z}$-module $G$.
\end{lem}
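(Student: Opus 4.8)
The plan is to recognize that the displayed isomorphism is nothing but the defining adjunction for the functor $-\otimes_Q X$, and then to make it concrete by exhibiting the tensor product explicitly so that naturality can be verified by hand. Since $\Hom(X,-)$ is left exact and preserves arbitrary products, it preserves all limits, and because both $\mathbb{Z}\text{-Mod}$ and $\text{Rep}(Q^{\text{op}}, R^{\text{op}})$ are Grothendieck categories the adjoint functor theorem already guarantees a left adjoint $-\otimes_Q X$ together with a natural isomorphism of the asserted form. The substance of the lemma is therefore to pin down this adjoint and verify the isomorphism at the level of elements, which I would do through the coend description below.

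\textbf{Explicit construction.} For a representation $Y$ in $\text{Rep}(Q^{\text{op}}, R^{\text{op}})$ I would set
$$Y\otimes_Q X=\Big(\bigoplus_{i\in Q_0}Y(i)\otimes_R X(i)\Big)\Big/K,$$
where $K$ is the subgroup generated by all elements $Y(a^{\text{op}})(y)\otimes x-y\otimes X(a)(x)$ with $a\in Q_1$, $y\in Y(t(a))$ and $x\in X(s(a))$; here $Y(a^{\text{op}})\colon Y(t(a))\to Y(s(a))$ and $X(a)\colon X(s(a))\to X(t(a))$, so both terms lie in $\bigoplus_{i}Y(i)\otimes_R X(i)$. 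This is exactly the coequalizer computing the coend $\int^{i}Y(i)\otimes_R X(i)$. Giving a $\mathbb{Z}$-linear map $Y\otimes_Q X\to G$ is then the same as giving a family of $\mathbb{Z}$-linear maps $\theta_i\colon Y(i)\otimes_R X(i)\to G$ which annihilates $K$, that is, which satisfies $\theta_{s(a)}(Y(a^{\text{op}})(y)\otimes x)=\theta_{t(a)}(y\otimes X(a)(x))$ for every arrow $a$.

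\textbf{Vertexwise adjunction.} For each vertex $i$ the classical tensor-hom adjunction for modules gives a natural isomorphism $\Hom_{\mathbb{Z}}(Y(i)\otimes_R X(i), G)\cong\Hom_{R^{\text{op}}}(Y(i),\Hom_{\mathbb{Z}}(X(i), G))$, and by definition $\Hom_{\mathbb{Z}}(X(i),G)=\Hom(X,G)(i)$. Thus each $\theta_i$ corresponds to an $R^{\text{op}}$-linear map $f(i)\colon Y(i)\to\Hom(X,G)(i)$. I would then check that the relation $\theta_{s(a)}(Y(a^{\text{op}})(y)\otimes x)=\theta_{t(a)}(y\otimes X(a)(x))$ translates, under this correspondence and the definition $\Hom(X,G)(a^{\text{op}})=\Hom_{\mathbb{Z}}(X(a),G)$, precisely into the commutativity squares making $\{f(i)\}_{i\in Q_0}$ a morphism $Y\to\Hom(X,G)$ in $\text{Rep}(Q^{\text{op}}, R^{\text{op}})$. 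Naturality of the resulting bijection in both $Y$ and $G$ is inherited from the naturality of the module-level adjunction.

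\textbf{Main obstacle.} The only real work is the last matching step: one has to verify that the single coend relation defining $Y\otimes_Q X$ encodes exactly the naturality condition defining a morphism of representations into $\Hom(X,G)$, keeping careful track of the reversal $a\mapsto a^{\text{op}}$ and of which module carries the left versus right action so that $Y(i)\otimes_R X(i)$ and $\Hom_{R^{\text{op}}}(Y(i),\Hom_{\mathbb{Z}}(X(i),G))$ pair correctly. This is a direct diagram chase rather than a conceptual difficulty, and it is the step I would write out in full.
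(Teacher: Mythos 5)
Your proposal is correct, but it is worth noting that the paper itself never proves this lemma: it is quoted from \cite[Theorem 2.2]{DELO23}, and within the paper's own setup the statement is essentially definitional, because $-\otimes_Q X$ is \emph{introduced} as the left adjoint of $\Hom(X,-)$, whose existence follows from exactly the observation you open with ($\Hom(X,-)$ is left exact and preserves products, hence preserves all small limits, and the special adjoint functor theorem applies since the domain $\mathbb{Z}$-Mod is complete, well-powered, and has the cogenerator $\mathbb{Q}/\mathbb{Z}$). So your first paragraph coincides with the paper's treatment, and everything after it is genuinely extra: you reconstruct the adjoint explicitly as the coend $\bigl(\bigoplus_{i}Y(i)\otimes_R X(i)\bigr)/K$ and verify the adjunction vertexwise via the classical tensor-hom adjunction, checking that the coend relations correspond precisely to the naturality squares for a morphism $Y\to\Hom(X,G)$ in $\text{Rep}(Q^{\text{op}},R^{\text{op}})$ -- which is, in substance, the construction carried out in the cited source. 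The trade-off is clear: the abstract route is shorter and suffices for everything this paper does with $\otimes_Q$, since the properties used later (right exactness, preservation of direct sums and summands, and the duality of Lemma 2.4) are all derived formally from the adjunction isomorphism rather than from any formula; your explicit model costs a careful bookkeeping of $a\mapsto a^{\text{op}}$ and of the left/right $R$-actions, but in exchange it produces a computable description of $Y\otimes_Q X$, which the purely abstract definition never provides. Your handling of the one delicate point -- that the single relation $\theta_{s(a)}(Y(a^{\text{op}})(y)\otimes x)=\theta_{t(a)}(y\otimes X(a)(x))$ translates exactly into $f(s(a))\circ Y(a^{\text{op}})=\Hom(X,G)(a^{\text{op}})\circ f(t(a))$ -- is correct, and since adjoints are unique up to natural isomorphism, your explicit functor agrees with the paper's abstractly defined one, so the lemma as stated follows either way.
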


By Lemma \ref{adjoint}, it is straightforward to verify that the functors $-\otimes_QX$ and $Y\otimes_Q-$ are right exact and preserve arbitrary direct sums and
direct summands. In the following we always let $X^+$ denote the representation $\Hom(X, \mathbb{Q}/\mathbb{Z})\in\text{Rep}(Q^{\text{op}}, R^{\text{op}})$.

\subsection*{Purity in the category of representations}
Recall from \cite[Difinition 2.6 and Lemma 2.19]{GT12} that a short exact sequence of  $R$-modules is called pure if it remains exact after applying   the
tensor product functor $M\otimes_R-$ with any $R^{op}$-module $M$, or equivalently, after applying the Hom functor $\text{Hom}_R(N,- )$ for any finitely presented $R$-module $N$. We are inspired to introduce the following definition.

\begin{df}An exact sequence
 $\eta: 0\to X\to Y\to Z\to 0$ of representations in $\text{Rep}(Q, R)$ is called pure if $S\otimes_Q\eta: 0\to S\otimes_QX\to S\otimes_QY\to S\otimes_QZ\to 0$ is exact for any representation $S\in \text{Rep}(Q^{\text{op}}, R^{\text{op}})$.
 \end{df}

The next result provides a nice characterization of pure exact sequences in $\text{Rep}(Q, R)$ and plays an important role  in our work.

\begin{lem} \label{pure} Let $\eta: 0\to X\to Y\to Z\to 0$ be an exact sequence in $\text{Rep}(Q, R)$. Then  $\eta$ is  pure  if and only if the sequence
$\eta^+: 0\to Z^+\to Y^+\to X^+\to 0$ is splitting exact in $\text{Rep}(Q^{\text{op}}, R^{\text{op}})$.
\end{lem}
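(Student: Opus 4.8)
The plan is to reduce everything to the adjunction isomorphism of Lemma \ref{adjoint} specialized to $G=\mathbb{Q}/\mathbb{Z}$, which reads
$$(S\otimes_Q W)^+\cong\Hom_{Q^{\text{op}}}(S,W^+)$$
naturally in $S\in\text{Rep}(Q^{\text{op}},R^{\text{op}})$ and $W\in\text{Rep}(Q,R)$. Write $\eta\colon 0\to X\xrightarrow{f}Y\xrightarrow{g}Z\to 0$, so that $\eta^+$ is $0\to Z^+\xrightarrow{g^+}Y^+\xrightarrow{f^+}X^+\to 0$. First I would observe that $\eta^+$ is automatically exact: kernels and cokernels in $\text{Rep}(Q,R)$ are computed vertexwise, and $(-)^+$ acts vertexwise as $\Hom_{\mathbb{Z}}(-,\mathbb{Q}/\mathbb{Z})$, so exactness is preserved because $\mathbb{Q}/\mathbb{Z}$ is an injective $\mathbb{Z}$-module. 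Hence in both directions the genuine content is the \emph{splitting}, not the exactness, of $\eta^+$. I will also use throughout that a sequence of abelian groups is exact if and only if its character dual under $(-)^+$ is exact, since $\mathbb{Q}/\mathbb{Z}$ is an injective cogenerator.

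For the "if" direction, suppose $\eta^+$ is split exact. Given any $S\in\text{Rep}(Q^{\text{op}},R^{\text{op}})$, applying the additive functor $\Hom_{Q^{\text{op}}}(S,-)$ to the split sequence $\eta^+$ yields again a split exact sequence. By naturality of the displayed isomorphism in the variable $W$, this sequence is isomorphic, as a complex, to $(S\otimes_Q\eta)^+$. Thus $(S\otimes_Q\eta)^+$ is exact, and by the cogenerator property $S\otimes_Q\eta$ is itself exact. As $S$ was arbitrary, $\eta$ is pure.

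For the "only if" direction, assume $\eta$ is pure; here I must manufacture a splitting, and the key idea is to test purity against the specific object $S=X^+$. Purity gives that $X^+\otimes_Q\eta$ is exact, so in particular $\mathrm{id}_{X^+}\otimes f\colon X^+\otimes_Q X\to X^+\otimes_Q Y$ is a monomorphism. Dualizing by the exact functor $(-)^+$ turns this into an epimorphism $(X^+\otimes_Q Y)^+\to (X^+\otimes_Q X)^+$, which under the adjunction isomorphism (natural in $W$) is identified with the map $\Hom_{Q^{\text{op}}}(X^+,f^+)\colon \Hom_{Q^{\text{op}}}(X^+,Y^+)\to \Hom_{Q^{\text{op}}}(X^+,X^+)$ given by post-composition with $f^+$. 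Surjectivity of this last map means the identity $\mathrm{id}_{X^+}$ admits a preimage $s\colon X^+\to Y^+$ with $f^+\circ s=\mathrm{id}_{X^+}$; that is, $f^+$ is a split epimorphism and $\eta^+$ splits.

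The step I expect to require the most care is the identification, in the previous paragraph, of the dualized map $(\mathrm{id}_{X^+}\otimes f)^+$ with post-composition by $f^+$ under the isomorphism of Lemma \ref{adjoint}. This rests on verifying that the adjunction isomorphism is natural in the representation variable and sends the morphism induced by $f$ to the morphism induced by $f^+$. Once this naturality diagram is in place, the clever choice $S=X^+$ together with the cogenerator property of $\mathbb{Q}/\mathbb{Z}$ makes both implications formal, with no need to describe finitely presented representations or to compute homology. I would therefore devote the bulk of the write-up to pinning down these naturality squares and keep the purity and cogenerator manipulations brief.
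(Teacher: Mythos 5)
Your proposal is correct and follows essentially the same route as the paper: both directions rest on the natural isomorphism $(S\otimes_Q W)^+\cong\Hom_{Q^{\text{op}}}(S,W^+)$ of Lemma \ref{adjoint} together with the injective cogenerator property of $\mathbb{Q}/\mathbb{Z}$, and the splitting is manufactured by the same choice of test object $S=X^+$, which lifts $\mathrm{id}_{X^+}$ through $f^+$. Your explicit vertexwise argument for the exactness of $\eta^+$ just spells out what the paper dismisses as ``easily seen.''
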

\begin{proof} If the sequence   $\eta^+: 0\to Z^+\to Y^+\to X^+\to 0$ in $\text{Rep}(Q^{\text{op}}, R^{\text{op}})$ is splitting exact, then $\Hom_{Q^{\text{op}}}(S, \eta^+)$ is exact  for any representation $S\in \text{Rep}(Q^{\text{op}}, R^{\text{op}})$, now it follows from the following commutative diagram
 $$\xymatrix{
  0 \ar[r]^{} & \Hom_{Q^{\text{op}}}(S, Z^+) \ar[d]_{\cong} \ar[r]^{} & \Hom_{Q^{\text{op}}}(S, Y^+) \ar[d]_{\cong} \ar[r]^{} &\Hom_{Q^{\text{op}}}(S, X^+) \ar[d]_{\cong} \ar[r]^{} &0 \\
   0 \ar[r]^{} & (S\otimes_QZ)^+ \ar[r]^{} &  (S\otimes_QY)^+ \ar[r]^{} &  (S\otimes_QX)^+\ar[r]^{} &0 }$$
 that the sequence $S\otimes_Q\eta$ is exact as the three vertical maps are natural isomorphic, see Lemma \ref{adjoint}. Thus the exact sequence $\eta$ is  pure.

  Conversely, suppose that the exact sequence $\eta$ is  pure, that is, the sequence
$S\otimes_Q\eta$ is  exact for any representation $S\in \text{Rep}(Q^{\text{op}}, R^{\text{op}})$. Then the second row in the diagram above is clearly exact, and again we infer from  Lemma \ref{adjoint}  that the sequence
$$\xymatrix{  0 \ar[r]^{} & \Hom_{Q^{\text{op}}}(S, Z^+)  \ar[r]^{} & \Hom_{Q^{\text{op}}}(S, Y^+) \ar[r]^{} &\Hom_{Q^{\text{op}}}(S, X^+)  \ar[r]^{} &0 }$$ is exact. It is easily seen that $\eta^+$ is exact, and so by taking $S= X^+$, one then gets readily that $\eta^+$ is splitting exact, as desired.
\end{proof}

Recall from \cite[Definition 2.2]{Hos13} that a monomorphism $f: X\to Y$ in $\text{Rep}(Q, R)$ is said to be a pure  monomorphism if
 $f^+: Y^+\to X^+$ is a splitting epimorphism,  and an epimorphism $g: Y\to Z$ in $\text{Rep}(Q, R)$ is said to be a pure epimorphism if
 $g^+: Z^+\to Y^+$ is a splitting monomorphism. In the sense, an exact sequence $ 0\to X\xrightarrow{f} Y\xrightarrow{g} Z\to 0$  in $\text{Rep}(Q, R)$ is  pure  if and only if $f^+: Y^+\to X^+$ is a splitting epimorphism, if and only if  $g^+: Z^+\to Y^+$ is a splitting monomorphism.
Thus, it follows by Lemma \ref{pure} that our definition of the pure exact sequence in $\text{Rep}(Q, R)$ agrees to \cite[Definition 2.2]{Hos13}.

Recall from \cite{AN14} that a representation $X$ is called \emph{fp-injective}, or \emph{absolutely pure} if every
exact sequence $0\to X\to Y$ is pure. The following results are obtained by Enochs, Estrada, Garc\'{\i}a Rozas, Oyonarte and Torrecillas in \cite{EE05, EEGR09, EOT04}, and  Aghasi and Nemati in \cite{AN14}, respectively.

\begin{lem}\label{proflatfp-injective} Let $Q$ be a quiver, and $X$ a representation in $\text{Rep}(Q, R)$.
\begin{enumerate}
  \item If $X$ is projective (respectively, flat), then  $\varphi_i^X$ is a splitting (respectively, pure) monomorphism, and
  $X(i)$ is a projective (respectively, flat) $R$-module for each vertex  $i\in Q_0$. The converse holds provided that the quiver $Q$ is left rooted.
  \item If $X$ is injective, then $\psi_i^X $ is a splitting epimorphism, and  $X(i)$ is an injective $R$-module for each vertex  $i\in Q_0$. The converse holds provided that the quiver $Q$ is right rooted.
 \item If $X$ is  fp-injective, then $\psi_i^X $ is a  pure epimorphism, and
 $X(i)$ is an fp-injective $R$-module for each vertex  $i\in Q_0$. The converse holds provided that the quiver $Q$ is right rooted, and $R$ is left coherent.
\end{enumerate}
\end{lem}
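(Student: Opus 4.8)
The plan is to handle the three assertions through a single device—the character duality $(-)^+=\Hom(-,\mathbb{Q}/\mathbb{Z})$ of Lemma \ref{pure}—which lets me reduce the fp-injective statement (3) to the flat statement (1), and to treat the injective statement (2) separately by means of the adjoint functors attached to a vertex. Throughout I would exploit that the two canonical maps $\varphi_i^X$ and $\psi_i^X$ are, respectively, a coproduct comparison and a product comparison, so that dualizing interchanges their roles together with the roles of left- and right-rootedness.

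For the injective case in (2) I would first recall that evaluation at a vertex is exact and has the right adjoint $e^i_{\rho}$, whence $e^i_{\rho}$ sends injective $R$-modules to injective representations; a direct computation shows that $e^i_{\rho}(E)$ has each $\psi_j$ a split epimorphism with injective components, and since both properties pass to arbitrary products and direct summands while every injective $X$ embeds as a summand of $\prod_{i\in Q_0}e^i_{\rho}(E_i)$ for suitable injective modules $E_i\supseteq X(i)$, the forward implication follows. For the converse I would induct transfinitely along the filtration $\{W_\alpha\}$ of Proposition \ref{rooted quiver property}: right rootedness prevents arrows from $W_{\alpha+1}$ into the complement of $\bigcup_{\gamma\le\alpha}W_\gamma$, so the splitting of $\psi_i^X$ and injectivity of $X(i)$ let one propagate injectivity from $W_\alpha$ to $W_{\alpha+1}$ and pass to unions at limit stages. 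The flat case of (1) is then the dual statement, proved with $s_i$ and coproducts in place of $e^i_{\rho}$ and products along the left-rooted filtration of $Q$.

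For (3) the crux is the bridge that $X$ is fp-injective in $\text{Rep}(Q,R)$ if and only if $X^+$ is flat in $\text{Rep}(Q^{\mathrm{op}},R^{\mathrm{op}})$, a quiver-level analogue of the classical fact that a module is absolutely pure precisely when its character module is flat. I would prove this bridge directly from the tensor--Hom adjunction of Lemma \ref{adjoint} together with the purity criterion of Lemma \ref{pure}, translating the purity of every embedding $0\to X\to Y$ into the purity of every epimorphism onto $X^+$. Feeding $X^+$ into the flat characterization (1) over the left-rooted quiver $Q^{\mathrm{op}}$ then produces the dictionary $X^+(i)=X(i)^+$, where $X(i)^+$ is flat if and only if $X(i)$ is fp-injective, while $\varphi_i^{X^+}$ is a pure monomorphism if and only if $\psi_i^X$ is a pure epimorphism; this delivers both halves of (3).

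The main obstacle, and the precise point at which left coherence of $R$ is needed, is the last correspondence in the converse. The source of $\varphi_i^{X^+}$ is the coproduct $\bigoplus_{a:i\to\ast}X(t(a))^+$, whereas the dual of $\psi_i^X$ acts on $\bigl(\prod_{a:i\to\ast}X(t(a))\bigr)^+$, and these coincide only up to the canonical comparison between a coproduct and a product—an isomorphism only when $i$ emits finitely many arrows. To reconstruct the pure monomorphism $\varphi_i^{X^+}$ from the pure epimorphism $\psi_i^X$ when this product is infinite, I would need that products of flat right $R$-modules remain flat, which is exactly Chase's characterization of left coherence; equivalently, this is the failure-free analogue of the fact, recalled in the introduction, that without coherence the fp-injective $R$-modules need not be closed under cokernels of monomorphisms. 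I therefore expect the bulk of the technical work to reside in this single compatibility lemma, the remaining steps being formal consequences of Lemmas \ref{pure} and \ref{adjoint} and Proposition \ref{rooted quiver property}.
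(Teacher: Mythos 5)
The paper offers no proof of this lemma at all (it is a compilation of results cited from Enochs--Estrada, Enochs--Estrada--Garc\'{\i}a Rozas, Enochs--Oyonarte--Torrecillas, and Aghasi--Nemati), so your attempt must stand on its own, and it has a genuine gap in part (3). Your central bridge --- that $X$ is fp-injective in $\text{Rep}(Q,R)$ if and only if $X^+$ is flat in $\text{Rep}(Q^{\mathrm{op}},R^{\mathrm{op}})$ --- is not a ``classical fact'' provable unconditionally from Lemmas \ref{adjoint} and \ref{pure}: already for a one-vertex quiver, i.e.\ for modules, the implication ``$M$ fp-injective $\Rightarrow M^+$ flat'' is \emph{equivalent} to left coherence of $R$ (Cheatham--Stone); only the converse ``$M^+$ flat $\Rightarrow M$ fp-injective'' holds over arbitrary rings. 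Since the forward half of (3) is asserted for arbitrary rings and arbitrary quivers, it cannot be obtained from this bridge; and if the bridge were provable unconditionally, your own argument would establish the converse of (3) without coherence, contradicting known counterexamples. Even granting coherence, running the forward half of (3) backwards through (1) fails for a second reason: from $\varphi_i^{X^+}=(\psi_i^X)^+\circ\iota$ being a pure monomorphism one cannot recover that $(\psi_i^X)^+$ is a split monomorphism unless the comparison $\iota:\bigoplus_{a}X(t(a))^+\to\bigl(\prod_{a}X(t(a))\bigr)^+$ is epimorphic, i.e.\ unless $i$ emits only finitely many arrows --- a hypothesis the lemma does not make. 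The forward direction of (3) needs a direct argument: embed $X$ purely into an injective $E$, evaluate at $i$ (purity of a sequence of representations passes to each vertex, as the paper notes at the end of Section 2), and exploit the splitting of $\psi_i^E$.

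Your localization of where coherence enters is also inverted. In the converse of (3) --- the only half where coherence is assumed --- the product/coproduct discrepancy is harmless: one never needs $\iota$ to be an isomorphism, only $\varphi_i^{X^+}$ to be a pure monomorphism, and that holds unconditionally, since $(\psi_i^X)^+$ is a split monomorphism and $\iota$ is always a pure monomorphism (composing $\iota$ with the dual of the canonical inclusion $\bigoplus_a X(t(a))\hookrightarrow\prod_a X(t(a))$ gives the canonical pure inclusion $\bigoplus_a X(t(a))^+\hookrightarrow\prod_a X(t(a))^+$, and a first factor of a pure monomorphism is pure). So neither Chase's theorem nor closure of flats under products is the relevant input; what coherence actually buys, and the only place it is needed, is the vertexwise dictionary ``$X(i)$ fp-injective $\Rightarrow X(i)^+$ flat,'' which you asserted as unconditional. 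Two smaller points: in part (1) the stalk functor $s_i$ does not preserve projectivity or flatness (over $\bullet\to\bullet$ the representation $s_1(R)=(R\to 0)$ has $\varphi_2$ not even monic, so it is not flat); the correct companion to $e^i_\rho$ is the \emph{left} adjoint of evaluation, $e^i_\lambda(M)(j)=\bigoplus_{Q(i,j)}M$. Finally, character duality cannot reach the projective half of (1) at all --- there is no character-module criterion for projectivity --- so that case still requires its own transfinite induction along the left-rooted filtration, as in the original sources.
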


It is evident
that any projective representation is always flat since a flat representation is defined to be a direct limit
of (finitely generated) projective representations.

\vspace{2mm}
{
 Let $\eta: 0\to X\to Y\to Z\to 0$ be an exact sequence in $\text{Rep}(Q, R)$.  Then it is clear that the pure exactness of $\eta$ implies the pure exactness of the sequence $\eta(v): 0\to X(v)\to Y(v)\to Z(v)\to 0$ of $R$-modules for each vertex $v\in Q_0$, but the converse is not true in general. In fact, let $Q$ be a non-discrete quiver, that is, it has at least one arrow, and $X$ a representation.
Then we have an exact sequence
 $$\xi: \xymatrix{
   0 \ar[r]^{} & X \ar[r]^{\hspace{-8mm}f} &  \prod_{v\in Q_0} e^{v}_{\rho}(X(v)) \ar[r]^{\hspace{-5mm}g} &  \prod_{a\in Q_1} e^{s(a)}_{\rho}(X(t(a))) \ar[r]^{} & 0   }$$
 in $\text{Rep}(Q, R)$ with $\xi(v)$ splitting for each vertex $v\in Q_0$, and so $\xi(v)$ is pure exact.
Moreover,  if we let $Q=(\xymatrix{ \bullet \ar[r]_{1\hspace{1.1cm}2} & \bullet })$ and $X=s_2(I)$ be a representation of $Q$
with $I\not=0$ an injective $R$-module, then it is easy to see that the exact sequence
$$\xi^+: \xymatrix{ 0 \ar[r]^{} & [\prod_{a\in Q_1} e^{s(a)}_{\rho}(X(t(a)))]^+ \ar[r]^{\hspace{5mm}g^+} &  [\prod_{v\in Q_0} e^{v}_{\rho}(X(v))]^+ \ar[r]^{\hspace{15mm}f^+} &  X^+ \ar[r]^{} & 0   }$$
restated explicitly as follows
$$\xymatrix{
  0 \ar[r]^{} & 0 \ar[d]_{} \ar[r]^{} & X^+ \ar@{=}[d] \ar@{=}[r] & X^+ \ar[d]_{} \ar[r]^{} & 0 \\
  0 \ar[r]^{} & X^+ \ar@{=}[r] & X^+ \ar[r]^{} & 0 \ar[r]^{} & 0   }$$
isn't splitting, that is, the exact sequence
$0\to s_2(I) \to   e^{2}_{\rho}(I) \to   e^{1}_{\rho}(I) \to 0$ of representations
 in $\text{Rep}(Q, R)$ isn't pure.
}

\section{Strongly fp-injective representations}
In  this section the notion of strongly fp-injective representations is introduced and the properties of such representations are
investigated.

Recall from \cite{Hos13} that a  representation $F$ in $\text{Rep}(Q, R)$ is flat if and only if any epimorphism
with target $F$ is pure, and dually, $J$ is called fp-injective if and only if  any monomorphism
with source $J$ is pure (\cite{AN14}). We see from \cite[Proposition 2.6]{Hos13} that the class of all flat representations  is closed under
taking kernels of epimorphisms. However, it is easily seen that the class of all fp-injective representations isn't closed under
taking cokernels of monomorphisms generally. In fact, the purity of an exact sequence $0\to J\to E\to E/J\to 0 $  of representations with $E$ injective doesn't necessarily imply the  fp-injectivity of $E/J$.

By further pursuit one would see that a representation $X$ in $\text{Rep}(Q, R)$ is flat if and only if there exists a pure acyclic projective resolution  in the category $\text{Rep}(Q, R)$, that is,
there exists a pure exact sequence of representations $\cdots\to P^{-1}\to P^0\to X\to 0$ with each $P^i$ projective. We are inspired to
introduce the notion of  strongly fp-injective representations as the following, the class of these representations is shown to be
closed under taking cokernels of monomorphisms, see Proposition \ref{closeprop1}.

\begin{df}  A representation $X$ is called strongly fp-injective if it admits a pure acyclic injective resolution  $ 0 \rightarrow X \rightarrow E^0 \rightarrow E^1\rightarrow \cdots$ in $\text{Rep}(Q, R)$  with $E^i$ injective for all $i\geq 0$.
\end{df}

\begin{rem} The following statements hold:
\begin{enumerate}
  \item[(1)] It is clear that any injective representation is strongly fp-injective. By  \cite[Theorem 4.2]{AN14} any  strongly fp-injective
  representation is  fp-injective. On the other hand, we note that such implications are strict for general rings.
  \item[(2)] If   $ 0 \rightarrow X \rightarrow E^0 \rightarrow E^1\rightarrow \cdots$  is a pure acyclic injective resolution of $X$, then $\Ker(E^{i} \rightarrow E^{i+1})$ is strongly fp-injective in $\text{Rep}(Q, R)$  for each $i\geq 0$.
\end{enumerate}
\end{rem}

In the following, we will consider some closure properties of the class of strongly fp-injective representations.

\begin{prop}\label{closeprop}
The class of all strongly fp-injective representations is closed under
extensions, finite direct sums and direct summands.
\end{prop}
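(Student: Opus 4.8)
The plan is to reduce all three closure properties to one reformulation: a coresolution $\mathbf{E}\colon 0\to X\to E^0\to E^1\to\cdots$ by injective representations is \emph{pure acyclic} precisely when $S\otimes_Q\mathbf{E}$ is exact for every $S\in\text{Rep}(Q^{\text{op}},R^{\text{op}})$, equivalently (by the definition of purity, cf. Lemma \ref{pure}) when each syzygy short exact sequence $0\to K^i\to E^i\to K^{i+1}\to0$, with $K^0=X$, is pure. The tools I would use are that $\text{Rep}(Q,R)$ has enough injectives with (finite) direct sums of injectives injective, that $S\otimes_Q-$ is right exact and preserves direct sums (Lemma \ref{adjoint}), and the standard injective horseshoe and comparison theorems available in any Grothendieck category.

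The direct-sum case is immediate. Given pure acyclic injective coresolutions $\mathbf{E}_A$ and $\mathbf{E}_C$ of strongly fp-injective $A$ and $C$, the termwise sum $\mathbf{E}_A\oplus\mathbf{E}_C$ is an injective coresolution of $A\oplus C$, and $S\otimes_Q(\mathbf{E}_A\oplus\mathbf{E}_C)\cong(S\otimes_Q\mathbf{E}_A)\oplus(S\otimes_Q\mathbf{E}_C)$ is exact for every $S$; hence $A\oplus C$ is strongly fp-injective.

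For extensions, let $0\to A\to B\to C\to0$ be exact with $A,C$ strongly fp-injective. First I would apply the injective horseshoe lemma to fixed pure acyclic injective coresolutions of $A$ and $C$ to obtain an injective coresolution $\mathbf{E}_B$ of $B$ with $E_B^i=E_A^i\oplus E_C^i$, fitting into a short exact sequence of complexes $0\to\mathbf{E}_A\to\mathbf{E}_B\to\mathbf{E}_C\to0$ that is split in every nonnegative degree and lies over $0\to A\to B\to C\to0$. It then remains to prove that each syzygy sequence $0\to K_B^i\to E_B^i\to K_B^{i+1}\to0$ is pure. For this I would fix $S$ and apply $S\otimes_Q-$ to the commutative $3\times3$ diagram whose three rows are the $i$-th syzygy sequences of $A$, $B$, $C$ and whose middle column $0\to E_A^i\to E_B^i\to E_C^i\to0$ is split: the split column stays exact, the outer two rows stay exact because $A$ and $C$ are strongly fp-injective, and a routine diagram chase then forces the middle row to remain left exact, i.e.\ $S\otimes_Q K_B^i\to S\otimes_Q E_B^i$ is injective. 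Splicing these over all $i$ shows $S\otimes_Q\mathbf{E}_B$ is exact, so $B$ is strongly fp-injective. This is the step I expect to be the main obstacle: since the given extension $0\to A\to B\to C\to0$ need not itself be pure, one cannot simply tensor the sequence of complexes and read off the conclusion, and the argument must be run syzygy-by-syzygy through the $3\times3$ diagram.

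Finally, for direct summands I would first record that pure acyclicity is independent of the chosen injective coresolution: any two injective coresolutions of an object are homotopy equivalent by the comparison theorem, and since $S\otimes_Q-$ is additive it preserves homotopy equivalences, so $S\otimes_Q-$-exactness transfers between them. Now suppose $W=X\oplus Y$ is strongly fp-injective and choose \emph{arbitrary} injective coresolutions $\mathbf{I}_X$ and $\mathbf{I}_Y$ of $X$ and $Y$. Then $\mathbf{I}_X\oplus\mathbf{I}_Y$ is an injective coresolution of $W$, hence pure acyclic by the previous remark; as $S\otimes_Q(\mathbf{I}_X\oplus\mathbf{I}_Y)\cong(S\otimes_Q\mathbf{I}_X)\oplus(S\otimes_Q\mathbf{I}_Y)$ is exact for every $S$, each summand is exact, so $\mathbf{I}_X$ is pure acyclic and $X$ is strongly fp-injective.
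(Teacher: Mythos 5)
Your proof is correct. For extensions you are following the same route the paper indicates (it simply cites the Horseshoe and Snake Lemmas and calls the verification routine); your syzygy-by-syzygy chase through the tensored $3\times 3$ diagram, using that the degreewise split middle column stays exact and that the outer rows stay exact by purity of the chosen resolutions of $A$ and $C$, is precisely the omitted verification, and your remark that the given extension need not itself be pure correctly identifies why such a chase is unavoidable. Where you genuinely diverge is the direct summand case. The paper never invokes the comparison theorem: starting from a pure exact sequence $0\to X\oplus Y\to E^0\to X'\to 0$, it forms push-outs along the two inclusions to produce pure exact sequences $0\to X\to E^0\to X^1\to 0$ and $0\to Y\to E^0\to Y^1\to 0$, uses the Snake Lemma and injectivity of $E^0$ to identify $X^1\oplus Y^1\cong E^0\oplus X'$ (hence strongly fp-injective by closure under extensions and finite sums), and then iterates this construction one degree at a time to assemble a pure acyclic injective resolution of $X$. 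Your route instead proves first, via the comparison theorem and additivity of $S\otimes_Q-$, that pure acyclicity of an injective coresolution does not depend on the coresolution chosen, after which summands follow at once from the direct-sum case. Both arguments are valid. Yours is shorter and delivers Corollary \ref{closecokernel} (every injective coresolution of a strongly fp-injective representation is pure) as an immediate byproduct, whereas the paper only obtains that statement later as a consequence of Proposition \ref{closeprop1}; the paper's push-out argument, in turn, stays inside the elementary purity toolkit the authors deliberately restrict themselves to, and its degree-by-degree iteration is the template reused in Propositions \ref{closed p1} and \ref{Stability}.
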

\begin{proof}
By using the Horse-Shoe Lemma and the Snake Lemma, it follows readily that
 the class of all strongly fp-injective representations is closed under
extensions, and so it is closed under finite direct sums.

To see that the class is closed under direct summands, let  $X\oplus Y$ be  a strongly fp-injective representation, and we will show that
the representation $X$ (or $Y$) is strongly fp-injective.
By  definition there is a pure exact sequence
$$\xymatrix{0 \ar[r]^{} & X\oplus Y \ar[r]^{\alpha} &E^0 \ar[r]^{} & X' \ar[r]^{} &0   }$$
of representations such that $E^0$ is injective and $X'$ is strongly fp-injective. Consider
the push-out diagram with exact rows and columns:
$$ \xymatrix@C=8mm@R=8mm{
  & 0\ar[d]_{}                        &0\ar[d]_{}& &\\
  & X\ar[d]_{\rho_X} \ar@{=}[r]^{} &X\ar[d]_{\beta_X}& &\\
 0 \ar[r]^{} & X\oplus Y \ar[d]_{}\ar[r]^{\alpha}  &E^0\ar[d]_{} \ar[r]^{} &X'\ar@{=}[d]_{}\ar[r]^{} &0 \\
 0 \ar[r]^{} & Y \ar[d]_{}\ar[r]^{}  & X^1\ar[d]_{} \ar[r]^{} &X'\ar[r]^{} &0 \\
            & 0  & 0  & &  }$$
As the morphisms $\rho_X$ and $\alpha$ are pure monomorphisms,   so is $\beta_X$. Thus the exact sequence of representations
$$\xymatrix{0 \ar[r]^{} & X \ar[r]^{\beta_X} &E^0 \ar[r]^{} & X^1 \ar[r]^{} &0   }\eqno{(1)}$$
is pure exact.

By interchanging the roles of $X'$ and $X$, one obtains
a pure exact sequence
$$\xymatrix{0 \ar[r]^{} & Y \ar[r]^{\beta_Y} &E^0 \ar[r]^{} & Y^1 \ar[r]^{} &0   }\eqno{(2)}$$
of representations.
The direct sum of the sequences
 (1) and (2) makes up the upper row, which is easily seen pure exact, in the following commutative diagram:
$$\xymatrix{
  0 \ar[r]^{} & X\oplus Y \ar@{=}[d]_{} \ar[r]^{} & E^0\oplus E^0 \ar[d]_{\theta} \ar[r]^{} & X^1\oplus Y^1 \ar[d]_{} \ar[r]^{} & 0 \\
  0\ar[r]^{} & X\oplus Y \ar[r]^{\alpha} & E^0 \ar[r]^{} & X' \ar[r]^{} & 0  }$$
where $\theta:  E^0\oplus E^0\to  E^0$ is the morphism given by $\theta(i)(a, b)=a+b$ for each vertex $i\in Q_0$.
 By the Snake Lemma we obtain the following commutative diagram with exact rows and columns:
$$\xymatrix@C=8mm@R=8mm{
 & & 0\ar[d]_{}                        &0\ar[d]_{}& \\
 & & E^0\ar[d]_{} \ar@{=}[r]^{} &E^0\ar[d]_{}& \\
 0 \ar[r]^{} &X\oplus Y \ar@{=}[d]_{} \ar[r]^{}  & E^0\oplus E^0\ar[d]_{\theta} \ar[r]^{} &X^1\oplus Y^1\ar[d]_{}\ar[r]^{} &0 \\
 0 \ar[r]^{} & X\oplus Y  \ar[r]^{\alpha}  & E^0\ar[d]_{} \ar[r]^{} &X'\ar[d]_{}\ar[r]^{} &0 \\
             &               & 0  & 0  &  }$$
 It follows that the right-hand column splits as $E^0$ is injective. Since the class of strongly fp-injective representations is closed under
 finite direct sums, one gets that $X^1\oplus Y^1\cong E^0\oplus X'$ is strongly fp-injective.

By replacing $X\oplus Y$ with $X^1\oplus Y^1$ and using the same procedure, one gets a pure acyclic injective resolution
$0\to X \to E^0 \to E^1\to E^2 \to \cdots $
of the representation $X$. This shows that $X$ is a strongly fp-injective.
\end{proof}

\begin{prop}\label{closeprop1}
The class of all strongly fp-injective representations is closed under
taking cokernels of monomorphisms.
\end{prop}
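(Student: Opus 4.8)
The plan is to prove the statement in the following form: if $0\to X\xrightarrow{f} Y\xrightarrow{g} Z\to 0$ is exact in $\text{Rep}(Q,R)$ with $X$ and $Y$ strongly fp-injective, then $Z$ is strongly fp-injective. The guiding principle is to avoid building a pure acyclic injective resolution of $Z$ and verifying its purity by hand; instead I will realize $Z$ as a direct summand of a representation already seen to be strongly fp-injective, so that the conclusion drops out of the closure properties recorded in Proposition \ref{closeprop}.

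First I would use that $X$ is strongly fp-injective to extract the initial step of a pure acyclic injective resolution, i.e.\ a pure exact sequence $0\to X\xrightarrow{\iota} E^0\to X^1\to 0$ with $E^0$ injective; truncating the resolution shows that the cokernel $X^1$ is again strongly fp-injective (this is exactly the second item of the Remark above). I then form the pushout of $\iota$ and $f$,
$$\xymatrix{ X \ar[r]^{f}\ar[d]_{\iota} & Y \ar[d]^{j} \\ E^0 \ar[r]^{k} & W }$$
Since pushouts preserve monomorphisms in the abelian category $\text{Rep}(Q,R)$ and carry the cokernel of each original map to the cokernel of the opposite leg, this produces two short exact sequences, namely $0\to Y\xrightarrow{j} W\to X^1\to 0$ and $0\to E^0\xrightarrow{k} W\to Z\to 0$.

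The two sequences now finish the argument. The first presents $W$ as an extension of the strongly fp-injective representation $X^1$ by the strongly fp-injective representation $Y$, so $W$ is strongly fp-injective by closure under extensions (Proposition \ref{closeprop}). In the second sequence the subobject $E^0$ is injective, hence $k$ splits and $W\cong E^0\oplus Z$. Therefore $Z$ is a direct summand of the strongly fp-injective representation $W$, and closure under direct summands (Proposition \ref{closeprop} again) yields that $Z$ is strongly fp-injective. Note that only the first step of the resolution of $X$ is used, together with the bare hypothesis that $Y$ is strongly fp-injective.

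Because the argument is short, the issue is less a computational obstacle than choosing the right reduction: the key move is to refuse to verify purity of a resolution of $Z$ directly and instead exploit the injectivity of $E^0$ to split the second sequence. The points that genuinely need care are the standard pushout bookkeeping (that $k$ is monic with $\Coker(k)\cong Z$ and $\Coker(j)\cong X^1$) and the observation that closure under extensions in Proposition \ref{closeprop} applies to $0\to Y\to W\to X^1\to 0$ even though this sequence need not be pure. This keeps the whole proof inside the purity-and-injectivity framework stressed in the paper, with no appeal to $\Ext$ groups.
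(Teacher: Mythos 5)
Your proposal is correct and is essentially identical to the paper's own proof: the paper also takes a pure exact sequence $0\to X\to I\to X'\to 0$ with $I$ injective and $X'$ strongly fp-injective, forms the pushout $U$ of $X\to Y$ and $X\to I$, deduces that $U$ is strongly fp-injective from the (not necessarily pure) extension $0\to Y\to U\to X'\to 0$ via Proposition \ref{closeprop}, and then splits $0\to I\to U\to Z\to 0$ using injectivity of $I$ to realize $Z$ as a direct summand of $U$. Your $E^0$, $X^1$, $W$ correspond exactly to the paper's $I$, $X'$, $U$, so there is nothing to add.
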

\begin{proof}
Let $0\to X\to Y\to Z\to 0$ be an exact sequence of representations with $X$ and $Y$ strongly fp-injective.
Then there is a pure exact sequence of representations $0\to X\to I\to X'\to 0$ such that $X'$ is strongly fp-injective and $I$ is injective.
Consider the push-out diagram of representations with exact rows and columns:
$$\xymatrix@C=8mm@R=8mm{
 & 0\ar[d]_{}    &0\ar[d]_{}& &\\
 0 \ar[r]^{} &X \ar[d]_{} \ar[r]^{} &Y\ar[d]_{}\ar[r]^{}&Z  \ar@{=}[d]_{} \ar[r]^{}& 0\\
 0 \ar[r]^{} &I \ar[d]_{} \ar[r]^{}  & U\ar[d]_{} \ar[r]^{} &Z \ar[r]^{} &0 \\
             &X'\ar[d]    \ar@{=}[r]^{}  & X'\ar[d]_{}&& \\
             & 0             & 0  &  &  }$$
As  the class of strongly fp-injective representations is closed under extensions (see Proposition \ref{closeprop}), the exactness of
the middle column implies that $U$ is strongly fp-injective. Since $I$ is injective,  it follows that the middle row splits. Now we get that
$I\oplus Z\cong U$ is strongly fp-injective, and then so is $Z$ by  Proposition \ref{closeprop}.
\end{proof}

\begin{prop}\label{closed p1} Let  $0\rightarrow X\rightarrow Y\rightarrow Z\rightarrow 0$ be a pure exact sequence of  representations.
If $Y$ and $Z$ are strongly fp-injective, then so is $X$.
\end{prop}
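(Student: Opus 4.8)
The plan is to reduce the claim to a single cosyzygy step: I aim to construct a pure exact sequence $0\to X\to I\to C\to 0$ with $I$ injective and $C$ strongly fp-injective, and then splice it with a pure acyclic injective resolution of $C$ to obtain one for $X$. Note in advance that, unlike Theorem B, no hypothesis on the quiver $Q$ will be needed, since the argument uses only purity and the closure properties already established.

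First I would exploit that $Y$ is strongly fp-injective to peel off the initial term of its pure acyclic injective resolution, i.e.\ a pure exact sequence $0\to Y\to I\to Y'\to 0$ with $I$ injective; here the cokernel $Y'$ is again strongly fp-injective, being a cosyzygy of a pure acyclic injective resolution (see the Remark above, whose tail $0\to Y'\to E^{1}\to E^{2}\to\cdots$ is itself such a resolution). Because the given sequence $0\to X\to Y\to Z\to 0$ is pure, the map $X\to Y$ is a pure monomorphism, and by construction $Y\to I$ is a pure monomorphism; applying $(-)^{+}$ turns each into a split epimorphism (by the characterization of purity recalled after Lemma \ref{pure}), and since a composite of split epimorphisms splits, the composite $X\to I$ is again a pure monomorphism. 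Setting $C=\Coker(X\to I)$ then gives the desired pure exact sequence $0\to X\to I\to C\to 0$ with $I$ injective.

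Next I would identify the structure of $C$. Comparing the two sequences through the commutative ladder
$$\begin{CD}
0 @>>> X @>>> Y @>>> Z @>>> 0\\
@. @| @VVV @VVV @.\\
0 @>>> X @>>> I @>>> C @>>> 0
\end{CD}$$
and invoking the Snake Lemma produces an exact sequence $0\to Z\to C\to Y'\to 0$. As $Z$ and $Y'$ are both strongly fp-injective, closure under extensions (Proposition \ref{closeprop}) forces $C$ to be strongly fp-injective. Pleasantly, this step needs no purity of $0\to Z\to C\to Y'\to 0$, since closure under extensions is available for arbitrary exact sequences.

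Finally, splicing $0\to X\to I\to C\to 0$ with a pure acyclic injective resolution $0\to C\to E^{0}\to E^{1}\to\cdots$ of $C$ yields the augmented complex $0\to X\to I\to E^{0}\to E^{1}\to\cdots$ with all terms injective; its syzygy short exact sequences are precisely $0\to X\to I\to C\to 0$ together with those of the resolution of $C$, all of which are pure, so the spliced resolution is pure acyclic and $X$ is strongly fp-injective. I expect the only real care to lie in the purity bookkeeping, namely checking that the composite $X\to I$ stays pure and, above all, that splicing preserves pure acyclicity; the homological identification of $C$ via the Snake Lemma is routine.
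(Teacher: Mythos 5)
Your proof is correct and follows essentially the same route as the paper: your representation $C=\Coker(X\to I)$ is exactly the push-out $U$ of $Z\leftarrow Y\to E$ that the paper constructs, and your Snake-Lemma identification of the extension $0\to Z\to C\to Y'\to 0$ is just the push-out's right-hand column, with the remaining steps (purity of the composite $X\to I$, closure under extensions, splicing) matching the paper's argument step for step. The only difference is expository: you spell out the purity of the composite and the splicing, which the paper leaves as "easy to see."
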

\begin{proof}
By the definition, there is  a pure exact sequence of representations $0\rightarrow Y\rightarrow E\rightarrow V\rightarrow 0$ with $E$ injective and  $V$ strongly fp-injective. Take into account the following push-out diagram of representations:
$$\xymatrix{
               &                      & 0\ar[d]_{}            & 0 \ar[d]_{}              &  \\
  0 \ar[r]^{} & X \ar@{=}[d]_{} \ar[r]^{} & Y \ar[d]_{} \ar[r]^{} & Z \ar[d]_{} \ar[r]^{} & 0\\
  0 \ar[r]^{} & X           \ar[r]^{} & E \ar[d]_{} \ar[r]^{} & U   \ar[d]_{} \ar[r]^{} & 0 \\
              &                       & V \ar[d]_{} \ar@{=}[r]^{} & V   \ar[d]_{}             &  \\
              &                       & 0                     & 0                         & }
$$
The exactness of the right-hand column implies that $U$ is strongly fp-injective as
the class of strongly fp-injective representations is closed under extensions, see Proposition \ref{closeprop}. It is then easy to see that the middle row
is pure exact, and so it is readily checked that $X$ is strongly fp-injective.
\end{proof}

As an immediate consequence of Proposition \ref{closeprop1} and Proposition \ref{closed p1} respectively, one has the following corollaries.

\begin{cor} \label{closecokernel} A representation $X$ is strongly fp-injective if and only if any exact sequence  $ 0 \rightarrow X \rightarrow E^0 \rightarrow E^1\rightarrow \cdots$ is pure, where $E^i$ is injective for all $i\geq 0$.
\end{cor}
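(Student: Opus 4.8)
The plan is to prove the two implications separately, the forward one being where all the content lies. The backward implication is immediate: since $\text{Rep}(Q,R)$ has enough injectives, one can always construct \emph{some} exact sequence $0\to X\to E^0\to E^1\to\cdots$ with every $E^i$ injective; if every such sequence is pure, then this one is in particular pure, so $X$ admits a pure acyclic injective resolution and is strongly fp-injective by definition. So I would dispatch this direction in one line and devote the work to the converse.

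For the forward implication, suppose $X$ is strongly fp-injective and let $0\to X\to E^0\to E^1\to\cdots$ be an \emph{arbitrary} exact sequence with each $E^i$ injective. Setting $K^0=X$ and $K^{i+1}=\Coker(E^{i-1}\to E^i)$ for $i\geq 0$, the resolution splices into short exact sequences $0\to K^i\to E^i\to K^{i+1}\to 0$, and — using right-exactness of $S\otimes_Q-$ and the definition of purity — the whole resolution is pure exactly when each of these short pieces is pure. Thus I would reduce the problem to showing that every splicing sequence is pure. The key preliminary step is an induction proving that \emph{each} cosyzygy $K^i$ is strongly fp-injective: $K^0=X$ is so by hypothesis, and if $K^i$ is strongly fp-injective, then since $E^i$ is injective (hence strongly fp-injective), the sequence $0\to K^i\to E^i\to K^{i+1}\to 0$ realizes $K^{i+1}$ as the cokernel of a monomorphism between strongly fp-injective representations, so $K^{i+1}$ is strongly fp-injective by Proposition \ref{closeprop1}.

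Once every $K^i$ is known to be strongly fp-injective, each $K^i$ is in particular fp-injective, and by the characterization of fp-injective representations recalled at the start of this section (any monomorphism with fp-injective source is pure), the inclusion $K^i\to E^i$ is a pure monomorphism; this is precisely the assertion that $0\to K^i\to E^i\to K^{i+1}\to 0$ is pure. Splicing these pure pieces back together then yields purity of the entire resolution, completing the proof. The one genuinely load-bearing step — and the only place where \emph{strong} fp-injectivity rather than plain fp-injectivity is used — is the inductive propagation of strong fp-injectivity up the cosyzygies through Proposition \ref{closeprop1}: without it one could not guarantee that $K^{i+1}$ is again fp-injective (this is exactly the failure that prevents the ordinary fp-injective class from being closed under cokernels of monomorphisms), and the pure-monomorphism argument could not be iterated beyond the first cosyzygy. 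Everything else is routine bookkeeping about splicing short exact sequences.
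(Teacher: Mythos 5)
Your proof is correct and is essentially the paper's own argument: the paper presents this corollary as an immediate consequence of Proposition \ref{closeprop1}, precisely via the induction you spell out (each cosyzygy is strongly fp-injective by closure under cokernels of monomorphisms, hence fp-injective, so each inclusion into the injective term is a pure monomorphism, and the pure short exact pieces splice back into a pure resolution), with the backward direction following trivially from the existence of enough injectives. The only quibble is the harmless indexing slip $K^{i+1}=\Coker(E^{i-1}\to E^{i})$ at $i=0$, where one should read $E^{-1}=X$.
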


\begin{cor} Any fp-injective representation of  strongly fp-injective dimension at most 1 is
strongly fp-injective.
\end{cor}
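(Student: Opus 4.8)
The plan is to read off the hypothesis as a short coresolution and then reduce to Proposition \ref{closed p1}. Unwinding the definition, that $X$ has strongly fp-injective dimension at most $1$ means there is an exact sequence
$$0 \rightarrow X \rightarrow T^0 \rightarrow T^1 \rightarrow 0$$
of representations with $T^0$ and $T^1$ strongly fp-injective. My goal is to upgrade this ordinary exact sequence to a \emph{pure} exact sequence, since Proposition \ref{closed p1} then finishes the argument immediately.

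The upgrade is exactly where the fp-injectivity of $X$ enters. By the characterization recalled from \cite{AN14} at the start of Section 3, a representation is fp-injective if and only if every monomorphism with it as source is pure. Applying this to the monomorphism $X \rightarrow T^0$ shows that it is a pure monomorphism, so by the description of purity following Lemma \ref{pure} the whole sequence $0 \rightarrow X \rightarrow T^0 \rightarrow T^1 \rightarrow 0$ is pure exact.

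Finally I would apply Proposition \ref{closed p1} to this pure exact sequence: since its middle and right-hand terms $T^0$ and $T^1$ are strongly fp-injective, the proposition yields that the left-hand term $X$ is strongly fp-injective, as desired. The one genuinely nontrivial step, and hence the place where care is required, is the passage from ordinary exactness to purity; everything rests on fp-injectivity forcing the inclusion $X \rightarrow T^0$ to be a pure monomorphism. I expect no further obstacle: no appeal to the classification of injective representations over rooted quivers, and no hypothesis on the quiver $Q$ or on the ring $R$, is needed, which is precisely why the statement is advertised as an immediate consequence of Proposition \ref{closed p1}.
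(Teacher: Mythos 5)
Your proof is correct and is essentially the paper's own argument: the paper presents this corollary as an immediate consequence of Proposition \ref{closed p1}, and the one step it leaves implicit is exactly the one you supply, namely that fp-injectivity of $X$ makes the monomorphism $X \to T^0$ pure, so that the coresolution $0 \to X \to T^0 \to T^1 \to 0$ witnessing strongly fp-injective dimension at most $1$ is a pure exact sequence to which Proposition \ref{closed p1} applies. You are also right that no hypothesis on $Q$ or $R$ is needed.
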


\begin{rem} (1) Given a pure short exact sequence $0\to X\to Y\to Z\to 0$ of representations, it is known from Propositions \ref{closeprop}-\ref{closed p1} that,
if any two representations $X, Y$ and $Z$ are strongly fp-injective, then so is the third.

(2) Note that the class of all strongly fp-injective representations is neither
 closed under taking  pure subrepresentations nor closed under taking  pure quotients of representations, see Example \ref{exa1} below.
\end{rem}

Recall from \cite{LGO} that an $R$-module $M$ is called \textbf{strongly fp-injective} if,
for any finitely presented $R$-module $N$, $\Ext_R^i(N,M)=0$ for all $i\geq 1$.
Emmanouil and  Kaperonis  characterized certain complexes of strongly fp-injective modules, see \cite{EK}. In particular,
if $R$ is left coherent, then the strongly fp-injective modules are exactly the fp-injective modules; and if
$R$ is left noetherian, then they are exactly the injective modules.
 We will show that such modules are closely connected to  strongly fp-injective representations.

\begin{exa} \label{exa1} {\rm Let $Q$  be the quiver
$\xymatrix{ \bullet \ar[r]_{1\hspace{1.1cm}2} & \bullet }$, and let $M$ be
an fp-injective $R$-module which is not  strongly fp-injective (such $M$ exists by \cite[Example 1]{LGO}).
Consider the exact sequence of $R$-modules $ 0 \rightarrow M \rightarrow E^0 \rightarrow E^1\rightarrow \cdots$
 with $E^i$ injective for all $i\geq 0$. It is evident that the sequence is not pure exact in $R$-Mod,
 and  so we deduce that the induced sequence of representations
$$\xymatrix{0 \ar[r]^{} & e^1_{\rho}(M) \ar[r]^{} & e^1_{\rho}(E^0) \ar[r]^{} & e^1_{\rho}(E^1) \ar[r]^{} & e^1_{\rho}(E^2)\ar[r]^{}&\cdots  }$$
 is not pure exact as well. Hence $e^1_{\rho}(M)=(M \to 0)$ is not a strongly fp-injective representation though
 it is a pure subrepresentation of $e^1_{\rho}(E^0)$, where $e^1_{\rho}(E^i)=(E^i \to 0)$ are injective (and so  strongly fp-injective) in $\text{Rep}(Q, R)$ for all $i\geq 0$, see Lemma \ref{proflatfp-injective}.}
\end{exa}


{We call a quiver $Q$   \emph{locally target-finite} if there are only
finitely many arrows in $Q$ from any given vertex to some others, that is, if the set $Q^{v\rightarrow\ast}_1$
is finite for any vertex $v\in Q_0$. Next, we intend to clarify the relations between  strongly fp-injective representations and  strongly fp-injective modules.}

\begin{thm}\label{strongly FP} Let  $X$ be a strongly fp-injective representation in $\text{Rep}(Q, R)$. {If the
quiver $Q$ is locally target-finite}, then the following statements hold.
\begin{enumerate}
  \item For each vertex  $v\in Q_0$,  $X(v)$ is a strongly fp-injective $R$-module.
  \item  For each vertex  $v\in Q_0$, the homomorphism $\psi_v^X: X(v)\longrightarrow\prod_{a\in Q^{v\rightarrow\ast}_1}X(t(a)) $ induced by
  $X(a): X(v)\longrightarrow X(t(a))$ is a pure epimorphism.
  \end{enumerate}
\end{thm}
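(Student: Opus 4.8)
The plan is to prove the two statements by different devices: part (1) by evaluating a pure injective coresolution at $v$, and part (2) by passing to the character dual $(-)^+$ and exploiting the splitting supplied by Lemma \ref{pure}.

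For (1) I would fix a pure acyclic injective resolution $0\to X\to E^0\to E^1\to\cdots$ afforded by the hypothesis and cut it into pure short exact sequences $0\to C^{i-1}\to E^i\to C^i\to 0$ with $C^{-1}=X$. Evaluating at $v$ keeps each of these pure in $R$-Mod (the purity of a sequence of representations passes to every vertex), while Lemma \ref{proflatfp-injective}(2) makes each $E^i(v)$ an injective $R$-module; hence $0\to X(v)\to E^0(v)\to E^1(v)\to\cdots$ is a pure injective coresolution of $X(v)$. Applying $\Hom_R(N,-)$ for a finitely presented $N$ leaves every one of these pure sequences exact, so the whole coresolution stays exact after $\Hom_R(N,-)$; reading off cohomology yields $\Ext_R^i(N,X(v))=0$ for all $i\ge 1$, i.e. $X(v)$ is strongly fp-injective. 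This step would not even use local target-finiteness.

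For (2) the guiding observation is that, as soon as the product over $Q_1^{v\to *}$ is \emph{finite}, the character dual $(\psi_v^X)^+$ is exactly the canonical map $\varphi_v^{X^+}$ of the dual representation $X^+\in\text{Rep}(Q^{\text{op}},R^{\text{op}})$. I would start from the first pure short exact sequence $0\to X\to E^0\to C^0\to 0$; by Lemma \ref{pure} its dual $0\to (C^0)^+\to (E^0)^+\to X^+\to 0$ is split exact, so $(E^0)^+\cong X^+\oplus (C^0)^+$ in $\text{Rep}(Q^{\text{op}},R^{\text{op}})$. Since $E^0$ is injective, $\psi_v^{E^0}$ is a split epimorphism by Lemma \ref{proflatfp-injective}(2), and dualizing a split epimorphism gives a split monomorphism which, by local target-finiteness, is identified with $\varphi_v^{(E^0)^+}$. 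Using the additivity of $\varphi_v^{(-)}$ together with the decomposition $(E^0)^+\cong X^+\oplus (C^0)^+$, this split monomorphism is the block-diagonal map $\varphi_v^{X^+}\oplus\varphi_v^{(C^0)^+}$; as a direct summand of a split monomorphism is again a split monomorphism, I would conclude that $\varphi_v^{X^+}=(\psi_v^X)^+$ is a split monomorphism. Finally, a map whose character dual is a split monomorphism must be epic (faithfulness of $(-)^+$ forces its cokernel to vanish), and the short exact sequence $0\to\Ker(\psi_v^X)\to X(v)\to\prod_a X(t(a))\to 0$ then dualizes to a split one, so by the module version of Lemma \ref{pure} the map $\psi_v^X$ is a pure epimorphism.

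The hardest point will be the identification $(\psi_v^X)^+=\varphi_v^{X^+}$, and this is exactly where local target-finiteness is indispensable: for an infinite $Q_1^{v\to *}$ the dual of $\prod_a X(t(a))$ strictly contains $\bigoplus_a X(t(a))^+$, so $(\psi_v^X)^+$ is no longer the canonical map $\varphi_v^{X^+}$ and the summand argument collapses. The remaining ingredients — that $(-)^+$ sends split epimorphisms to split monomorphisms and is faithful, and that additivity of $\varphi_v^{(-)}$ turns the direct-sum decomposition into a block-diagonal map — I expect to be routine, once the arrows of $Q^{\text{op}}$ with target $v$ are matched correctly against the arrows in $Q_1^{v\to *}$.
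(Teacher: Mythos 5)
Your proposal is correct. Part (1) coincides with the paper's argument: evaluate the pure acyclic injective resolution at $v$, use Lemma \ref{proflatfp-injective}(2) to see each $E^i(v)$ is injective, note purity passes to vertices, and read off $\Ext^i_R(N,X(v))=0$; the paper likewise records that this step needs no finiteness hypothesis. For part (2) your route is genuinely different in execution, though built from the same three ingredients (purity of $0\to X\to E^0\to C^0\to 0$, the splitting of $\psi_v^{E^0}$ from injectivity, and local target-finiteness to identify the dual of the finite product with the direct sum of duals). The paper chooses a splitting $g\colon X^+\to (E^0)^+$ of $f^+$ and a splitting $\phi_v$ of $\psi_v^{E^0}$, defines the explicit composite $\zeta=[\prod f(t(a))]^+\circ\phi_v^+\circ g(v)$, and verifies $\zeta\circ(\psi_v^X)^+=1$ by a long element-by-element computation in $[\prod_{a}X(t(a))]^+\cong\bigoplus_a X^+(t(a))$. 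You instead reinterpret $(\psi_v^X)^+$ as the canonical map $\varphi_v^{X^+}$ of the dual representation, use Lemma \ref{pure} to split $(E^0)^+\cong X^+\oplus(C^0)^+$ at the level of representations, and invoke additivity and naturality of $\varphi_v^{(-)}$ so that $\varphi_v^{X^+}$ becomes a direct summand of the split monomorphism $\varphi_v^{(E^0)^+}\cong(\psi_v^{E^0})^+$; a summand of a split monomorphism is split, and faithfulness plus exactness of $(-)^+$ then gives epimorphy and purity of $\psi_v^X$ exactly as in the paper. What your version buys is the elimination of the computation (the paper's $\zeta$ is, in effect, the retraction your summand argument produces abstractly) and a transparent localization of where local target-finiteness is indispensable, namely the identification $(\psi_v^X)^+=\varphi_v^{X^+}$, which fails for infinite $Q_1^{v\to\ast}$; what the paper's version buys is a self-contained verification that never needs the additivity/naturality bookkeeping for $\varphi_v^{(-)}$, though that bookkeeping is routine.
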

\begin{proof}
(1) By definition, there is a pure exact sequence of representations
$$\xymatrix@C=8mm@R=10mm{0 \ar[r]^{} & X \ar[r]^{f} & E_0 \ar[r]^{f_0} &E_1 \ar[r]^{f_1} & E_2 \ar[r]^{} &\cdots  },$$
where all $E_i$ are injective representations.
It follows from Lemma \ref{proflatfp-injective}  that all $E_i(v)$ are injective $R$-modules. Note that the induced sequence of $R$-modules
$$\xymatrix@C=8mm@R=10mm{0 \ar[r]^{} & X(v) \ar[r]^{f(v)} & E_0(v) \ar[r]^{f_0(v)} &E_1(v) \ar[r]^{f_1(v)} & E_2(v) \ar[r]^{} &\cdots  }$$
is pure exact as well,  and so $\Ext^{n}_R(N, X(v))=0$ for any finitely presented $R$-module $N$ and all  $n\geq 1$.
Consequently, $X(v)$ is a strongly fp-injective $R$-module. This result holds true for any quiver $Q$ (not necessarily locally target-finite).

\vskip0.1in
(2) By the assumption, there is a pure monomorphism $f: X\to I$ of representations with $I$ injective.  Consider the following commutative diagram for any vertex $v\in Q_0$:
$$\xymatrix@C=12mm{  X(v)\ar[d]_{f(v)} \ar[r]^{\hspace{-4mm}\psi^{X}_v} & \displaystyle\prod_{a\in Q^{v\rightarrow\ast}_1}\hspace{-3mm}X(t(a))
 \ar[d]_{\prod f(t(a))} \ar[r]^{\hspace{3mm}\pi^{X}_{t(a)}} &  \ar[d]_{f(t(a))}  X(t(a))  \\
I(v)  \ar[r]^{\hspace{-4mm}\psi^{I}_v}  &\displaystyle\prod_{a\in Q^{v\rightarrow\ast}_1}\hspace{-3mm}I(t(a)) \ar[r]^{\hspace{3mm}\pi^{I}_{t(a)}}   & I(t(a)) }$$
where  $\pi^{X}_{t(a)}: \prod_{a\in Q^{v\rightarrow\ast}_1}X(t(a))\to X(t(a))$ denotes the $t(a)$-th coordinate homomorphism, $\pi^{I}_{t(a)}$  is defined similarly. By the properties of product, it is obvious that $\pi^{X}_{t(a)}\psi^{X}_v=X(a)$ and $\pi^{I}_{t(a)}\psi^{I}_v=I(a)$ for each arrow $a\in Q^{v\rightarrow\ast}_1$.

Since the homomorphism $\psi^{I}_v$ is splitting epimorphic, it follows that there is a homomorphism
$\phi_v: \prod_{a\in Q_1^{v\to *}}I(t(a))\to I(v)$ such that
$\psi^{I}_v\phi_v=1_{\prod_{a\in Q_1^{v\to *}}I(t(a))}$.
Since $X$ is strongly fp-injective, it follows that the sequence  $I^+ \xrightarrow{f^+} X^+ \rightarrow 0$
splits, that is,  there exists a homomorphism $g: X^+\to I^+$ such that $f^+g=1_{X^+}$, and also $g(v)X^+(a)=I^+(a)g(t(a))$ for any arrow $a\in Q_1^{v\to *}$. Now consider the following commutative diagram:
\begin{center}
\begin{tikzpicture}
\matrix(a)[matrix of math nodes,
          row sep=3.0em,
          column sep=4.5em,
          text height=1.5ex, text depth=0.5ex]
{I^+(t(a)) & {[}\displaystyle\prod_{a\in Q^{v\rightarrow\ast}_1}\hspace{-3mm}I(t(a)){]}^+ & I^+(v) \\
X^+(t(a)) & {[}\displaystyle\prod_{a\in Q^{v\rightarrow\ast}_1}\hspace{-4mm}X(t(a)){]}^+ & X^+(v) \\};
\path[->, font=\scriptsize]
(a-1-1) edge node[auto] {$[\pi^{I}_{t(a)}]^+$} (a-1-2)
(a-1-2) edge node[auto] {$(\psi^I_v)^+$} (a-1-3)
(a-1-1) edge node[auto] {$f^+(t(a))$} (a-2-1)
(a-1-2) edge node[auto] {$[\prod f(t(a))]^+$} (a-2-2)
(a-1-3) edge node[auto] {$f^+(v)$} (a-2-3)
(a-2-1) edge  [bend right=30]  node[] {$\begin{array}{c} \null \\ {X^+(a)} \end{array}$} (a-2-3)
(a-1-1) edge  [bend left=30]  node[] {$\begin{array}{c} {I^+(a)}\\ \null \end{array}$} (a-1-3)
(a-2-1) edge node[auto] {$[\pi^{X}_{t(a)}]^+$} (a-2-2)
(a-2-2) edge node[auto] {$(\psi^X_v)^+$} (a-2-3);
\end{tikzpicture}
\end{center}
Set $\zeta:= [\prod f(t(a))]^+\circ\phi_v^+\circ g(v):X^+(v)\to [\displaystyle\prod_{a\in Q^{v\rightarrow\ast}_1}\hspace{-3mm}X(t(a))]^+$. As the set $Q^{v\rightarrow\ast}_1$ is finite, for
each element $Y$  in $[\displaystyle\prod_{a\in Q^{v\rightarrow\ast}_1}\hspace{-3mm}X(t(a))]^+\cong\displaystyle\bigoplus_{a\in Q^{v\rightarrow\ast}_1}\hspace{-3mm}X^+(t(a))$, we have $Y=\sum_{a\in Q^{v\rightarrow\ast}_1}[\pi^{X}_{t(a)}]^+(Y_{t(a)})$ with each $Y_{t(a)}\in X^+(t(a)$, and then
\begin{equation*}
\begin{split}\zeta\circ (\psi^X_v)^+(Y)&=\zeta\circ(\psi^X_v)^+\big(\sum_{a\in Q^{v\rightarrow\ast}_1}[\pi^{X}_{t(a)}]^+(Y_{t(a)})\big)\\
&=\sum_{a\in Q^{v\rightarrow\ast}_1}\zeta\circ\big((\psi^X_v)^+\circ[\pi^{X}_{t(a)}]^+\big)(Y_{t(a)})\\
&=\sum_{a\in Q^{v\rightarrow\ast}_1}\zeta\circ X^+(a)(Y_{t(a)})\\
&=\sum_{a\in Q^{v\rightarrow\ast}_1}[\prod f(t(a))]^+\circ\phi_v^+\circ\big( g(v)\circ X^+(a)\big)(Y_{t(a)})\\
&=\sum_{a\in Q^{v\rightarrow\ast}_1}[\prod f(t(a))]^+\circ\phi_v^+\circ \big(I^+(a)\circ g(t(a))\big)(Y_{t(a)})\\
&=\sum_{a\in Q^{v\rightarrow\ast}_1}[\prod f(t(a))]^+\circ\phi_v^+\circ \big((\psi^I_v)^+\circ[\pi^{I}_{t(a)}]^+\circ g(t(a))\big)(Y_{t(a)})\\
&=\sum_{a\in Q^{v\rightarrow\ast}_1}[\prod f(t(a))]^+\circ\big((\phi_v^+\circ (\psi^I_v)^+\big)\circ[\pi^{I}_{t(a)}]^+\circ g(t(a))(Y_{t(a)})\\
&=\sum_{a\in Q^{v\rightarrow\ast}_1}[\prod f(t(a))]^+\circ\big( \psi^I_v\circ\phi_v\big)^+\circ[\pi^{I}_{t(a)}]^+\circ g(t(a))(Y_{t(a)})\\
&=\sum_{a\in Q^{v\rightarrow\ast}_1}\big([\prod f(t(a))]^+\circ[\pi^{I}_{t(a)}]^+\big)\circ g(t(a))(Y_{t(a)})\\
&=\sum_{a\in Q^{v\rightarrow\ast}_1}\big([\pi^{X}_{t(a)}]^+\circ f^+(t(a))\big)\circ g(t(a))(Y_{t(a)})\\
&=\sum_{a\in Q^{v\rightarrow\ast}_1}[\pi^{X}_{t(a)}]^+\circ \big(f^+(t(a))\circ g(t(a))\big)(Y_{t(a)})\\
&=\sum_{a\in Q^{v\rightarrow\ast}_1}[\pi^{X}_{t(a)}]^+(Y_{t(a)})\\
&=1_{[\prod_{a\in Q^{v\rightarrow\ast}_1}X(t(a))]^+}(Y).\\
\end{split}
\end{equation*}
This shows $(\psi^{X}_v)^+$ a  splitting monomorphism, and so $\psi^{X}_v: X(v)\to  \prod_{a\in Q^{v\rightarrow\ast}_1}X(t(a))$
is a pure epimorphism, as desired.
\end{proof}

\begin{rem} We use \cite[Example 3]{EEGR09} given by  Enochs, Estrada and Garcia Rozas, to show that a representation satisfying conditions (1) and (2) in Theorem \ref{strongly FP} is not necessarily strongly fp-injective.
\end{rem}

\begin{exa}{\rm
Consider the non-right rooted quiver $Q$
\begin{center}
\begin{tikzpicture}
\draw[->] (1,1) arc (0:354:0.5)node[right] {$\hspace{-0.08cm}\bullet$};
\end{tikzpicture}
\end{center}
with one vertex $v$ and an arrow $\alpha$ from $v$ to itself, and the category of representations of $Q$ by $\mathbf{k}$-vector spaces ($\mathbf{k}$ is a field).
In this case Rep($Q, \mathbf{k}$) is equivalent to the category  $\mathbf{k}[x]$-Mod of $\mathbf{k}[x]$-modules.  If we
take the representation $X$ with $\mathbf{k}[x,x^{-1}]$ in the vertex $v$ and the morphism $X(\alpha):\mathbf{k}[x,x^{-1}]\to \mathbf{k}[x,x^{-1}]$ given
by multiplying $x$. It is clear that $X$ satisfies (1) and (2) in Theorem \ref{strongly FP}, but $\mathbf{k}[x,x^{-1}]$ is not divisible
as a $\mathbf{k}[x]$-module, so it can't be injective. Since the ring $\mathbf{k}[x]$ is noetherian, the notions of strongly fp-injective $\mathbf{k}[x]$-modules and injective $\mathbf{k}[x]$-modules
coincide, and so $X$ is not strongly fp-injective as a $\mathbf{k}[x]$-module.}
\end{exa}

In the next, we will prove that a representation satisfying conditions (1) and (2) in Theorem \ref{strongly FP} is always strongly fp-injective when the quiver $Q$ is right-rooted {and locally target-finite}. To achieve this,  we need the following lemma.

\begin{lem}\label{split diagram} Assume that the commutative diagram of $R$-modules $$\xymatrix{
  0 \ar[r] & X \ar[d]_{f} \ar[r]^{\mu} & Y \ar[d]_{g} \ar[r]^{} & Z \ar[d]_{h} \ar[r]^{} &0 \\
  0 \ar[r] & L \ar[r]^{\nu} & M \ar[r]^{} & N \ar[r]^{} & 0 }$$
has two exact rows split with $r\mu=1$,  where $r: Y\rightarrow X$. If the homomorphism $h$ is monic, and  every homomorphism
$Z\to L$ can be extended to $N\to L$, then there exists a left-inverse $s: M\rightarrow L$ of the homomorphism $\nu$ such that $fr=sg$.
\end{lem}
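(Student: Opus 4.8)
The plan is to exploit the splitting of both rows and to build $s$ as a correction of an arbitrary left inverse of $\nu$. Denote by $\pi\colon Y\to Z$ and $q\colon M\to N$ the epimorphisms of the top and bottom rows. Since the bottom row splits, there is some homomorphism $s_0\colon M\to L$ with $s_0\nu=1_L$. I would look for $s$ in the form $s:=s_0+\varepsilon q$ for a suitable $\varepsilon\colon N\to L$, because any such correction is automatically again a left inverse of $\nu$: one has $s\nu=s_0\nu+\varepsilon q\nu=1_L$ as $q\nu=0$ by exactness of the bottom row. Thus the entire problem reduces to choosing $\varepsilon$ so that the remaining identity $sg=fr$ holds, and all the difficulty is concentrated in that choice.

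First I would measure the failure of $s_0$ by the discrepancy $d:=fr-s_0g\colon Y\to L$. Precomposing with $\mu$ and using the commutativity $g\mu=\nu f$ of the left-hand square together with $r\mu=1_X$ and $s_0\nu=1_L$ gives $d\mu=f(r\mu)-s_0(g\mu)=f-s_0\nu f=f-f=0$. Hence $d$ vanishes on the image of $\mu$, and since $\pi$ is the cokernel of $\mu$, the map $d$ factors through $\pi$; that is, there is a unique homomorphism $d'\colon Z\to L$ with $d=d'\pi$. At this point the hypotheses on $h$ enter: because $h\colon Z\to N$ is a monomorphism along which every homomorphism $Z\to L$ extends, I may extend $d'$ to obtain $\varepsilon\colon N\to L$ with $\varepsilon h=d'$. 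Setting $s:=s_0+\varepsilon q$, the compatibility is then checked by $sg=s_0g+\varepsilon qg=s_0g+\varepsilon h\pi=s_0g+d'\pi=s_0g+d=fr$, where the commutativity $qg=h\pi$ of the right-hand square is used in the second equality.

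I expect the sole genuinely non-formal step to be the passage from $d'$ to $\varepsilon$: the vanishing $d\mu=0$, the factorization through $\pi$, and the two verifications $s\nu=1_L$ and $sg=fr$ are all diagram bookkeeping forced by the two splittings, but the existence of the correction term $\varepsilon$ is precisely what the two assumptions on $h$ are designed to supply. The monomorphism hypothesis is exactly what makes \emph{extending} $d'$ along $h$ meaningful, and the extension property then delivers $\varepsilon$; this is the crux on which the whole construction of $s$ rests.
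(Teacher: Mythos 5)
Your proof is correct, and it is essentially the paper's own argument in coordinate-free form: the paper uses the two splittings to identify $Y=X\oplus Z$ and $M=L\oplus N$, observes that $g$ then has the matrix form $\begin{pmatrix} f & k \\ 0 & h \end{pmatrix}$, extends $k$ along the monomorphism $h$ to $\alpha\colon N\to L$, and sets $s=(1,-\alpha)$. Under this identification your discrepancy $d'$ is precisely $-k$, your $\varepsilon$ is $-\alpha$, and your $s=s_0+\varepsilon q$ is exactly the paper's $(1,-\alpha)$, so the two constructions coincide.
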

\begin{proof}  Since the diagram has two exact rows split, we can write it as
$$\xymatrix@C=10mm@R=10mm{
  X \ar[d]_{f} \ar[r]^{\Small\left(
         \begin{matrix}
           1 \\
           0 \\
         \end{matrix}
       \right)
  } & X\oplus Z \ar[d]_{g} \ar[r]^{\big{(0,1)}} & Z \ar[d]_{h} \\
  L \ar[r]^{\Small\left(
         \begin{matrix}
           1 \\
           0 \\
         \end{matrix}
       \right)
  } & L\oplus N \ar[r]^{\big{(0,1)}} & N  }
$$
and  $r={\big{(1,0)}}:X\oplus Z \to X$.   Note that the homomorphism $g: X\oplus Z\rightarrow L\oplus N$  has the form $g=\begin{pmatrix}
     f & k \\
     0 & h \\
   \end{pmatrix}$,
where $k: Z\rightarrow L$. Since $h: Z\to N$ is monic and $k: Z\rightarrow L$ can be extended to $\alpha: N\to L$ by the assumption,  that is, $k=\alpha h$, one has $sg=fr$ with $s=(1,-\alpha)$, as desired.
\end{proof}

The following result provides an explicit characterization of  strongly fp-injective representations of a {locally target-finite and}  right rooted quiver.

\begin{thm}\label{strongly FP right} Let  $Q$ be a {locally target-finite and} right rooted quiver. Then a representation $X$ in $\text{Rep}(Q, R)$ is strongly fp-injective if and only if the following statements hold.
\begin{enumerate}
  \item For each vertex  $i\in Q_0$,  $X(i)$ is a strongly fp-injective $R$-module.
  \item  For each vertex  $i\in Q_0$, the homomorphism $\psi_i^X: X(i)\longrightarrow\prod_{a\in Q^{i\rightarrow\ast}_1}X(t(a)) $  is a pure epimorphism.
    \end{enumerate}
\end{thm}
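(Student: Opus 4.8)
The forward implication is exactly Theorem \ref{strongly FP}, so the whole content here is the converse: assuming (1) and (2) for a representation $X$ over a locally target-finite right rooted quiver $Q$, I would produce a pure acyclic injective resolution of $X$. The plan is to reduce everything to a single one-step statement: \emph{if $X$ satisfies (1) and (2), then there is a pure monomorphism $u\colon X\to I$ with $I$ an injective representation whose cokernel $C=\Coker(u)$ again satisfies (1) and (2).} Granting this, I would iterate it to obtain pure short exact sequences $0\to C^{n}\to I^{n}\to C^{n+1}\to 0$ (with $C^0=X$ and each $I^n$ injective) and splice them; since each piece is pure, applying $S\otimes_Q-$ keeps every short sequence exact, so the spliced sequence $0\to X\to I^0\to I^1\to\cdots$ is pure exact. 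This is precisely a pure acyclic injective resolution, whence $X$ is strongly fp-injective.

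To build $I$ and $u$ I would use induction on the rank of a vertex, transfinite along the filtration $\{W_\alpha\}$, where $v\in W_{\alpha+1}\setminus W_\alpha$ has rank $\alpha$. The decisive structural fact is Proposition \ref{rooted quiver property}(2): every arrow out of a rank-$\alpha$ vertex lands in a vertex of strictly smaller rank, so the construction may recurse on targets. For $v$ of rank $\alpha$, with $I(t(a))$ and $u(t(a))$ already defined for every arrow $a\in Q_1^{v\to*}$, I would first choose a pure monomorphism $\iota_v\colon X(v)\to E_v$ of $R$-modules with $E_v$ injective and $\Coker(\iota_v)$ strongly fp-injective (this exists since $X(v)$ is a strongly fp-injective module), then set $I(v)=E_v\oplus\prod_{a\in Q_1^{v\to*}}I(t(a))$ and declare $\psi_v^I$ to be the projection onto the second factor, which is finite by local target-finiteness. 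As each $I(t(a))$ is injective and the product is finite, $I(v)$ is injective and $\psi_v^I$ splits, so $I$ is injective by Lemma \ref{proflatfp-injective}(2). Setting $u(v)=(\iota_v,(\prod u(t(a)))\circ\psi_v^X)$, the identity $\pi_{t(a)}\psi_v^X=X(a)$ makes $u$ a morphism of representations, and $u(v)$ is a pointwise pure monomorphism because its composite with the projection onto $E_v$ is the pure monomorphism $\iota_v$. A snake-lemma argument applied to $X(v)\xrightarrow{u(v)}I(v)\to E_v$ yields a short exact sequence $0\to\prod_a I(t(a))\to C(v)\to\Coker(\iota_v)\to 0$, so $C(v)$ is strongly fp-injective by closure of strongly fp-injective modules under extensions, giving (1) for $C$; a second snake-lemma comparison of the $\psi$-sequences of $X$, $I$, $C$ (using that $\psi_v^I$ splits and $\psi_v^X$ is a pure epimorphism) gives that $\psi_v^C$ is a pure epimorphism, giving (2) for $C$.

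The genuine difficulty is that pointwise purity of $u$ does not imply purity in $\text{Rep}(Q,R)$, exactly the phenomenon flagged by the example in Section 2. By Lemma \ref{pure} it suffices to produce a \emph{natural} splitting $s\colon X^+\to I^+$ of $u^+$ in $\text{Rep}(Q^{\mathrm{op}},R^{\mathrm{op}})$. I would construct $s$ by induction on the rank of $v$ in $Q$, so that along each arrow $a^{\mathrm{op}}\colon t(a)\to v$ of $Q^{\mathrm{op}}$ the map $s(t(a))$ is already available, and at each stage I demand both $u(v)^+s(v)=1_{X(v)^+}$ and the naturality squares $s(v)\circ X(a)^+=I(a)^+\circ s(t(a))$. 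This is precisely the setting of Lemma \ref{split diagram}: the $+$-dual of the split $\psi_v^I$-sequence and the $+$-dual of the pure $\psi_v^X$-sequence (which splits, a pure epimorphism dualizing to a split monomorphism) furnish the two split rows, where local target-finiteness is again essential, turning $[\prod_a X(t(a))]^+$ into the finite direct sum $\bigoplus_a X^+(t(a))$; the extension hypothesis of the lemma holds because character modules are pure-injective while $\psi_v^X$ is a pure epimorphism. Feeding the already-built $s(t(a))$ into Lemma \ref{split diagram} produces a compatible $s(v)$, and assembling these gives the natural splitting, hence the purity of $u$.

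The main obstacle is this last step, manufacturing the splitting of $u^+$ coherently across all vertices and arrows, since the purely pointwise data is genuinely insufficient. The right rootedness (providing the well-founded induction via Proposition \ref{rooted quiver property}(2)) and the local target-finiteness (keeping all products finite, so that duals of products are direct sums and products of injectives remain injective) are exactly the two hypotheses that make the inductive splitting through Lemma \ref{split diagram} succeed. The remaining verifications, namely that $u$ commutes, that the two snake-lemma sequences behave as claimed, and that splicing preserves purity, are routine given the closure properties established in Section 3.
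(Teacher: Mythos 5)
Your outline coincides with the paper's: forward direction by Theorem \ref{strongly FP}; converse by a one-step statement (pure monomorphism into an injective with cokernel again satisfying (1) and (2)) that is iterated and spliced; purity of the step reduced, via Lemma \ref{pure}, to producing a natural splitting of the character-dual sequence; and that splitting built by transfinite induction along $\{W_\alpha\}$, using Lemma \ref{split diagram} at successor ordinals and local target-finiteness to identify $[\prod_{a}M(t(a))]^+$ with $\bigoplus_{a}M^+(t(a))$. One difference is harmless but costs you information: you construct a special injective $I$ recursively, whereas the paper takes an \emph{arbitrary} exact sequence $0\to X\to E\to C\to 0$ with $E$ injective and proves it is automatically pure with $C$ inheriting (1) and (2); that stronger statement is what yields Corollary \ref{closecokernel}, and it spares the recursion defining $I$.

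The genuine gap is in the decisive step, the application of Lemma \ref{split diagram}. You propose to take the two rows of that lemma to be the character duals of the $\psi^{I}_v$- and $\psi^{X}_v$-sequences and to ``feed in'' the inductively constructed $s(t(a))$. This cannot work as stated, for two reasons. First, Lemma \ref{split diagram} requires as input a \emph{complete} morphism of split short exact sequences (all three vertical maps); the only canonical verticals between your two rows are the duals of $u$, namely $[\prod u(t(a))]^+$, $u^+(v)$ and the dual of the inclusion $\Ker(\psi^{X}_v)\to\Ker(\psi^{I}_v)$, and then the lemma's hypothesis that the right-hand vertical be monic amounts to that inclusion being \emph{epic}, which fails in general; your $s(t(a))$ cannot serve as verticals because the middle vertical would have to be the map $s(v)$ you are trying to build. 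Second, even ignoring the hypotheses, the conclusion of Lemma \ref{split diagram} is a retraction of the bottom row's monomorphism, i.e.\ a map $X^+(v)\to[\prod_{a}X(t(a))]^+$, not the desired $s(v)\colon X^+(v)\to I^+(v)$ between the middle terms. The correct configuration is transposed: the rows must be the character duals of $0\to X\to I\to C\to 0$, evaluated once at $\prod_{a}(-)(t(a))$ and once at $v$; the verticals are the canonical maps $(\psi^{C}_v)^+,(\psi^{I}_v)^+,(\psi^{X}_v)^+$; the inductive data enter as the retraction $\bigoplus_{a}\gamma(t(a))$ splitting the \emph{top row} (this is where local target-finiteness is used); and the splitness of the dual $\psi^{X}_v$-sequence is consumed only through the lemma's hypotheses, since $(\psi^{X}_v)^+$ being a split monomorphism gives both monicity and the extension property. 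Note also that Lemma \ref{split diagram} produces retractions of monomorphisms, never sections of epimorphisms, so the data one can carry through the induction are retractions $\gamma(v)$ of the monomorphism $C^+(v)\to I^+(v)$ rather than your sections $s(v)$ of $u^+(v)$; that is enough for Lemma \ref{pure} (splitness of the dual sequence is all that is needed), but a proof along your lines would have to either establish the dual of Lemma \ref{split diagram} or convert $\gamma$ into $s$ and check naturality survives, neither of which your sketch supplies.
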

\begin{proof}
We need only to show the sufficiency  since the necessity follows from Theorem \ref{strongly FP}. Let $X$ be
a representation  in $\text{Rep}(Q, R)$ satisfying the conditions (1) and (2).
In the following, we will show that any exact sequence of representations $0\to X \to E \to C\to 0  $ with $E$ injective is
pure, and $C$ has the same properties as $X$. Then, inductively, we get readily that $X$ admits a pure acyclic injective resolution
$0 \to X \to E^0 \to E^1 \to E^2 \to\cdots$, and so $X$ is a strongly fp-injective representation.

Let  $0\to X \xrightarrow{f} E \xrightarrow{p} C\to 0$ be an exact sequence in  $\text{Rep}(Q, R)$ with $E$ injective.
Then for each vertex $i\in Q_0$, it is easily seen that the sequence of $R$-modules
$$\xymatrix{0 \ar[r]^{} & X(i) \ar[r]^{f(i)} & E(i) \ar[r]^{p(i)} &C(i) \ar[r]^{} &0  }$$
is pure exact with $E(i)$ injective  as $X(i)$ is a strongly fp-injective $R$-module, and so $C(i)$
is strongly fp-injective as the class of strongly fp-injective $R$-modules is closed under
taking cokernels of monomorphisms. Now consider the following commutative diagram with exact rows induced by the short exact sequence $0\to X \xrightarrow{f} E \xrightarrow{p} C\to 0$ of representations at the vertex $i\in Q_0$,
$$\xymatrix@C=6mm{
  0 \ar[r]^{} & X(i) \ar[d]_{\psi_i^X} \ar[r]^{f(i)} & E(i) \ar[d]_{\psi_i^{E}} \ar[r]^{p(i)} & C(i) \ar[d]_{\psi_i^{C}} \ar[r]^{} & 0 \\
  0\ar[r]^{} & \prod_{a\in Q^{i\rightarrow\ast}_1}X(t(a)) \ar[r]^{} & \prod_{a\in Q^{i\rightarrow\ast}_1}E(t(a)) \ar[r]^{} & \prod_{a\in Q^{i\rightarrow\ast}_1}C(t(a)) \ar[r]^{} & 0   }$$
Since the class of strongly fp-injective  $R$-modules is closed under direct products, it is easy to see that the bottom row is pure exact in $R$-Mod. Noting that $\psi_i^{E}$ is a splitting epimorphism, one infers readily that $\psi_i^{C}$ is a pure epimorphism. This proves that $C$ has the same properties as $X$.
To show the purity of the exact sequence of representations
$$\xymatrix{0\ar[r]^{} &X \ar[r]^{f} & E \ar[r]^{p} & C \ar[r]^{} &0  },$$
it suffice  to show that the sequence
$$\xymatrix{0\ar[r]^{} & C^+ \ar[r]^{p^+} & E^+ \ar[r]^{f^+} & X^+ \ar[r]^{} &0  }$$
is splitting in $\text{Rep}(Q^{\text{op}}, R^{\text{op}})$ by Lemma \ref{pure}.
In the next, we will prove that there exist homomorphisms $\gamma(v):  E^+(v) \to  C^+(v)$ for all $v\in Q_0$ such that
$\gamma:  E^+ \to  C^+$ is a morphism of representations satisfying $\gamma\circ p^+=1_{C^+}$.

As $Q$ is right rooted, there  exists some ordinal $\lambda$ such that $Q_0=W_\lambda=\bigcup_{\alpha\leq\lambda}W_\alpha$, see \cite[2.9]{HP19}.
We will show the result by transfinite induction on $\lambda$.
The induction start for $\lambda=0$ is nothing to prove since $W_0$ is empty. For $\lambda=1$, there are no arrows in $Q_1$,
since the sequence $0\to C^+(v) \xrightarrow{p^+(v)}  E^+(v)\xrightarrow{f^+(v)}  X^+(v) \to 0$
is splitting for each $v\in Q_0=W_1$, the result follows readily.

Now we assume that $\lambda>1$ is a limit ordinal and  that the
homomorphism $\gamma(w):  E^+(w) \to  C^+(w)$ has been built such that the following conditions hold for each  $\delta<\lambda$:
\begin{enumerate}
  \item[(I)] $\gamma(w)\circ p^+(w)=1_{ C^+(w)}$ for each $w\in W_\delta$;
  \item[(II)] $\gamma(v)\circ E(a)^+= C(a)^+\circ\gamma(w)$  for each arrow $a: v\to w$ in $Q_0$ with $v, w\in W_\delta$.
\end{enumerate}
We will build a homomorphism $\gamma(v):  E^+(v) \to  C^+(v)$ for any given vertex $v\in W_\lambda$ such that the  conditions (I) and (II) hold for the  ordinal $\lambda$. However, this is clear since we have $W_\lambda=\bigcup_{\delta<\lambda}W_\delta$ by our assumption that $\lambda$ is a limit ordinal.

It remains to consider the case that $\lambda=\delta+1$ is a successor ordinal. We assume again that
the homomorphism $\gamma(w):  E^+(w) \to  C^+(w)$ has been built such that the conditions  (I) and (II) hold for $\delta$.
We must build a homomorphism $\gamma(v):  E^+(v) \to  C^+(v)$  for any given vertex $v\in W_{\delta+1}$ such that  the conditions  (I) and (II) hold for $\delta+1$.

Without loss of generality, we may assume that $v\in W_{\delta+1}\setminus \bigcup_{\alpha\leq \delta}W_\alpha$.
Since we have
$$W_{\delta+1}=\left \{v\in Q_0 \hspace{0.2cm}
\begin{array}{ |l} v \text{ is not the source of any arrow} \\
 a \text{ in }  Q \text{\ with \ } t(a)\not\in\bigcup_{\alpha\leq \delta}W_\alpha\end{array}\right\},
$$
it follows that  $t(a)\in\bigcup_{\alpha\leq \delta}W_\alpha$ for every arrow $a\in Q^{v\rightarrow\ast}_1$. Therefore, we may let $a: v\to w$ be such an arrow, that is, we have $w=t(a)\in W_\delta$, see Proposition \ref{rooted quiver property}. Now consider the commutative diagram  of $R^{op}$-modules with exact rows :
$$\xymatrix@C=6mm@R=6mm{
 0\ar[r] & C^+(w) \ar[d]^{(\pi^{C}_w)^+} \ar[r]^{p^+(w)} & E^+(w) \ar[d]^{(\pi^{E}_w)^+} \ar[r]^{f^+(w)} & X^+(w) \ar[d]^{(\pi^X_w)^+} \ar[r]^{} & 0 \\
  0\ar[r] &[\prod_{a\in Q^{v\rightarrow\ast}_1}C(t(a))]^+ \ar[d]^{(\psi^{C}_v)^+} \ar[r]^{\eta} &[\prod_{a\in Q^{v\rightarrow\ast}_1}E(t(a))]^+ \ar[d]^{(\psi^{E}_v)^+} \ar[r]^{\hspace{-2mm}h} & [\prod_{a\in Q^{v\rightarrow\ast}_1}X(t(a))]^+ \ar[d]^{(\psi^X_v)^+} \ar[r]^{ } & 0  \\
 0\ar[r] & C^+(v)  \ar[r]^{p^+(v)} & E^+(v) \ar[r]^{f^+(v)} & X^+(v) \ar[r]^{} & 0   }
$$
where  $\pi^{C}_{t(a)}: \prod_{a\in Q^{v\rightarrow\ast}_1}C(t(a))\to C(t(a))$ denotes the $t(a)$-th coordinate homomorphism with $t(a)=w$, $\pi^{X}_w$  and $\pi^{E}_w$ are defined similarly. By our assumption, there are homomorphisms $\gamma(w):  E^+(w)\to  C^+(w)$ such that $\gamma(w)\circ p^+(w)=1_{C^+(w)}$ for each $w=t(a)$ with $a\in Q^{v\rightarrow\ast}_1$.
{By the assumption that the quiver $Q$ is locally target-finite, we have the set $Q^{v\rightarrow\ast}_1$  finite. This induces
an isomorphism $[\prod_{a\in Q^{v\rightarrow\ast}_1}M(t(a))]^+\cong \oplus_{a\in Q^{v\rightarrow\ast}_1}M^+(t(a))$ for any representation $M$ of $Q$.
If we set  $$\eta':=\oplus_{a\in Q^{v\rightarrow\ast}_1}\gamma(t(a)): [\prod_{a\in Q^{v\rightarrow\ast}_1}E(t(a))]^+\longrightarrow [\prod_{a\in Q^{v\rightarrow\ast}_1}C(t(a))]^+,$$ then it is easy to check}
$\eta'\circ\eta=1_{[\prod_{a\in Q^{v\rightarrow\ast}_1}C(t(a))]^+}$ and $(\pi^{C}_w)^+\circ\gamma(w)=\eta'\circ(\pi^{E}_w)^+.$
Notice further that the homomorphism $(\psi^X_v)^+$ is a splitting monomorphism
as $$\psi_v^X: X(v)\longrightarrow\prod_{a\in Q^{v\rightarrow\ast}_1}X(t(a)) $$   is a pure epimorphism by the condition, one infers from Lemma \ref{split diagram} that
there is a homomorphism $\gamma(v): E^+(v)\to C^+(v)$ such that
$\gamma(v)\circ p(v)^+=1_{C^+(v)}$ and $(\psi^{C}_v)^+\circ\eta'=\gamma(v)\circ(\psi^{E}_v)^+.$ That is, we have constructed
a homomorphism $\gamma(v):  E^+(v) \to  C^+(v)$  for any given vertex $v\in W_{\delta+1}$ such that  the conditions  (I) and (II) hold for $\delta+1$.
Thus, inductively, we can prove that there exists a morphism $\gamma:  E^+ \to  C^+$ of representations such that  $\gamma\circ p^+=1_{C^+}$.
This implies that the sequence
$$\xymatrix{0 \ar[r]^{} & C^+  \ar[r]^{p^+} &  E^+ \ar[r]^{f^+} & X^+ \ar[r]^{} & 0}$$  is splitting, as desired.
\end{proof}

\begin{cor}\label{strongly ccor} Let  $Q$ be a {locally target-finite and}  right rooted quiver.  Then  the class of strongly fp-injective  representations of  $Q$  is closed under direct products.
\end{cor}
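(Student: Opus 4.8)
The plan is to reduce everything to the characterization of strongly fp-injective representations in Theorem \ref{strongly FP right}. Let $\{X_\lambda\}_{\lambda\in\Lambda}$ be a family of strongly fp-injective representations and set $X=\prod_{\lambda\in\Lambda}X_\lambda$, so that $X(i)=\prod_{\lambda\in\Lambda}X_\lambda(i)$ for each vertex $i\in Q_0$ and $X(a)=\prod_\lambda X_\lambda(a)$ for each arrow $a$. Since $Q$ is locally target-finite and right rooted, it suffices to verify that $X$ satisfies conditions (1) and (2) of Theorem \ref{strongly FP right}; the conclusion then follows at once.

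For condition (1), I would use that the class of strongly fp-injective $R$-modules is closed under direct products --- this is precisely the fact already invoked inside the proof of Theorem \ref{strongly FP right}, and it holds because $\Ext^n_R(N,-)$ commutes with arbitrary products in its second argument, so that $\Ext^n_R(N,\prod_\lambda X_\lambda(i))\cong\prod_\lambda\Ext^n_R(N,X_\lambda(i))=0$ for every finitely presented $R$-module $N$ and every $n\geq 1$. Hence each $X(i)=\prod_\lambda X_\lambda(i)$ is a strongly fp-injective $R$-module.

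For condition (2), the key observation is that $\psi_i^X$ is, up to the canonical rearrangement of iterated products, the product of the maps $\psi_i^{X_\lambda}$. Indeed, reindexing gives a natural isomorphism $\prod_{a\in Q^{i\rightarrow\ast}_1}\prod_{\lambda}X_\lambda(t(a))\cong\prod_\lambda\prod_{a\in Q^{i\rightarrow\ast}_1}X_\lambda(t(a))$ under which $\psi_i^X$ corresponds to $\prod_\lambda\psi_i^{X_\lambda}$. By condition (2) for each $X_\lambda$ (Theorem \ref{strongly FP right} again), every $\psi_i^{X_\lambda}$ is a pure epimorphism of $R$-modules. It then remains to check that a product of pure epimorphisms of $R$-modules is again a pure epimorphism, which I would deduce from the characterization of purity via finitely presented test modules recalled in Section 2: an epimorphism $g_\lambda\colon M_\lambda\to N_\lambda$ is pure if and only if $\Hom_R(P,M_\lambda)\to\Hom_R(P,N_\lambda)$ is surjective for every finitely presented $R$-module $P$, and since $\Hom_R(P,-)$ commutes with products this surjectivity is inherited by $\prod_\lambda g_\lambda$. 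Thus $\psi_i^X$ is a pure epimorphism.

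Having verified (1) and (2), Theorem \ref{strongly FP right} yields that $X=\prod_\lambda X_\lambda$ is strongly fp-injective, proving the corollary. I do not expect a genuine obstacle: the only points requiring care are the index-rearrangement identifying $\psi_i^X$ with $\prod_\lambda\psi_i^{X_\lambda}$ and phrasing the two module-level closure statements consistently with the definitions used earlier, both of which are routine once Theorem \ref{strongly FP right} is in hand.
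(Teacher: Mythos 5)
Your proof is correct and takes essentially the same approach as the paper: both arguments reduce the statement to the vertex-wise characterization of Theorem \ref{strongly FP right} together with the closure of strongly fp-injective $R$-modules under direct products. The only cosmetic difference is that the paper first recasts condition (2) as ``$\psi_i^X$ is an epimorphism with strongly fp-injective kernel,'' which makes the product step immediate, whereas you verify directly---via the $\Hom_R(P,-)$ test for finitely presented $P$---that a product of pure epimorphisms is again a pure epimorphism; both verifications are routine and valid.
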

\begin{proof} By  Theorem \ref{strongly FP right}, it is easily seen that if the quiver $Q$ is {locally target-finite and} right rooted, then a representation $X$ of  $Q$ is strongly fp-injective if and only if  the $R$-modules $X(i)$ and $\Ker(\psi_i^X)$ are strongly fp-injective, and
 the homomorphism $\psi_i^X$   is  epimorphic for each vertex  $i\in Q_0$. Since the the class of strongly fp-injective
 $R$-modules  is closed under direct products, one gets the result readily.
\end{proof}

Recently, series work obtained by Holm and Jorgensen \cite{HP19}, and  Odabasi \cite{O19} implies that  there are various ways of lifting
a cotorsion pair in the category $R$-Mod of $R$-modules to a  cotorsion pair in  the category $\text{Rep}(Q, R)$ of representations.
Following the suggestion
in \cite[Page 4]{BHP24}, an $R$-module $K$ is called \emph{weakly fp-projective} if $\Ext^1_R(K, J)=0$ for all
strongly fp-injective $R$-modules $J$. It follows from \cite[Theorem 3.4]{LGO} that
$(\mathcal{K}, \mathcal{J})$ is a hereditary complete cotrosion pair over any ring $R$, where $\mathcal{K}$  and $\mathcal{J}$
denote  the classes of all weakly fp-projective and  strongly fp-injective $R$-modules, respectively.
We note that if the quiver $Q$  is {locally target-finite and} right rooted, then $\Psi(\mathcal{J})$ (see the notation (2.5) in Section 2) is exactly the class of
strongly fp-injective representations of $Q$.

\vskip0.1in
The following result follows directly from \cite[Theorem B]{HP19} and \cite[Theorem B]{O19}.

\begin{lem}\label{cotorsion pair KJ} Let $Q$ be a right rooted quiver. Then  $(\text{Rep}(Q,\mathcal{K}), \Psi(\mathcal{J}))$ forms a
hereditary complete cotorsion pair  in $\text{Rep}(Q, R)$, where $\text{Rep}(Q,\mathcal{K})$ denotes the class of all representations with each component in $\mathcal{K}$.
\end{lem}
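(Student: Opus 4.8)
The plan is to obtain this cotorsion pair as the right-rooted lift of the module-level cotorsion pair $(\mathcal{K}, \mathcal{J})$, using the general lifting machinery of Holm--J{\o}rgensen and Odabas{\i}. The starting point is the fact, recalled above from \cite[Theorem 3.4]{LGO}, that $(\mathcal{K}, \mathcal{J})$ is a hereditary complete cotorsion pair in $R$-Mod; these are precisely the input hypotheses required by the cited theorems, so essentially the whole proof consists of feeding this pair into those theorems and reading off the output.

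First I would match the notation. For a right rooted quiver $Q$ and a cotorsion pair $(\mathcal{A}, \mathcal{B})$ in $R$-Mod, the construction of \cite{HP19} produces a cotorsion pair in $\text{Rep}(Q,R)$ whose left-hand class is $\text{Rep}(Q,\mathcal{A})$ and whose right-hand class is $\Psi(\mathcal{B})$, the class described in notation $(2.5)$. Specializing to $(\mathcal{A}, \mathcal{B}) = (\mathcal{K}, \mathcal{J})$, \cite[Theorem B]{HP19} gives that $(\text{Rep}(Q,\mathcal{K}), \Psi(\mathcal{J}))$ is a cotorsion pair, and that it is hereditary, inherited from the hereditariness of $(\mathcal{K}, \mathcal{J})$ (equivalently, $\mathcal{J}$ is closed under cokernels of monomorphisms, so that $\Psi(\mathcal{J})$ enjoys the corresponding closure, and dually on the left). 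Completeness of the lifted pair is then exactly the content of \cite[Theorem B]{O19}, which upgrades the hereditary cotorsion pair of \cite{HP19} to a complete one once the underlying module-level pair is complete.

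To carry this out rigorously I would verify, step by step, that the hypotheses of \cite[Theorem B]{HP19} and \cite[Theorem B]{O19} are literally satisfied by $(\mathcal{K}, \mathcal{J})$: that the pair is complete and hereditary, and, for the completeness transfer in \cite{O19}, that it is cogenerated by a set, the last point being guaranteed since $(\mathcal{K}, \mathcal{J})$ arises as a complete cotorsion pair over the Grothendieck category $R$-Mod and is therefore of small type. I would then read off from the statements of those theorems the description of the two classes, confirming that the left class is indeed $\text{Rep}(Q,\mathcal{K})$ and the right class is $\Psi(\mathcal{J})$ rather than some variant, which is where the right-rootedness of $Q$ enters decisively (it is exactly right-rootedness that makes the $\psi_v^X$-condition defining $\Psi(\mathcal{J})$ the correct one, via the transfinite structure of Proposition \ref{rooted quiver property}).

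The main obstacle I anticipate is bookkeeping rather than conceptual: ensuring that the orientation conventions of \cite{HP19} and \cite{O19} (left rooted versus right rooted, and the $\Phi$ construction versus the $\Psi$ construction) align with ours, so that the dual pair $(\text{Rep}(Q,\mathcal{K}), \Psi(\mathcal{J}))$ is the one actually produced; and checking that the precise form of completeness furnished by \cite[Theorem B]{O19}, namely the existence of special $\Psi(\mathcal{J})$-preenvelopes and $\text{Rep}(Q,\mathcal{K})$-precovers, is what is being asserted. Since all the required properties of $(\mathcal{K}, \mathcal{J})$ are already established in the excerpt, no fresh homological computation should be needed beyond this verification.
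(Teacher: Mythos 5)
Your proposal takes essentially the same route as the paper: the paper's entire proof is to feed the hereditary complete cotorsion pair $(\mathcal{K},\mathcal{J})$ from \cite[Theorem 3.4]{LGO} into \cite[Theorem B]{HP19} (for the cotorsion pair and heredity) and \cite[Theorem B]{O19} (for completeness), exactly as you describe. One caveat: your claim that a complete cotorsion pair over the Grothendieck category $R$-Mod is automatically cogenerated by a set is not a valid general principle, but it is also harmless here, since the completeness transfer in \cite{O19} is applied to a pair already known to be complete (and for this particular pair, set-cogeneration does hold, coming from the syzygies of finitely presented modules that define $\mathcal{J}$).
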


It is known that the right adjoint preserves injective objects. In the following  we will see that the right adjoint $e^{Q'}_{\rho}$ of the restriction functor $e^{Q'}$  also preserves  strongly fp-injective representations.

\begin{prop}\label{sub-quiver fp} Let $Q'$ be a subquiver of $Q$, and $X$ be a strongly fp-injective representation of $Q'$. If $Q$ is {locally target-finite and} right rooted, then $e^{Q'}_{\rho}(X)$ is a strongly fp-injective representation of $Q$.
\end{prop}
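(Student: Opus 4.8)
The plan is to verify the two local conditions of Theorem \ref{strongly FP right} for the representation $Y:=e^{Q'}_{\rho}(X)$ over $Q$, and then invoke that theorem. First I would record that the subquiver $Q'$ is again locally target-finite, since $Q'^{\,v\rightarrow\ast}_1\subseteq Q^{v\rightarrow\ast}_1$ for every $v\in Q'_0$. Hence Theorem \ref{strongly FP} applies to the strongly fp-injective representation $X\in\text{Rep}(Q',R)$ and tells us that each $R$-module $X(w)$ with $w\in Q'_0$ is strongly fp-injective and that each structure map $\psi_w^{X}$ of $X$ in $\text{Rep}(Q',R)$ is a pure epimorphism. These are the only facts about $X$ that I will use.

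For condition (1) of Theorem \ref{strongly FP right}, I would read off from the explicit description of $e^{Q'}_{\rho}$ that, for every vertex $v\in Q_0$, the module $Y(v)$ is a direct sum of a (possibly trivial) copy of $X(v)$ with a direct product of modules of the form $X(t(a\alpha))$, all of whose targets lie in $Q'_0$. Every such factor is strongly fp-injective by the previous paragraph, and since the class of strongly fp-injective $R$-modules is closed under direct products (as already used in the proof of Theorem \ref{strongly FP right}), it follows that each $Y(v)$ is a strongly fp-injective $R$-module.

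The heart of the argument is condition (2): that $\psi_v^{Y}$ is a pure epimorphism for every $v$. Here I would index $Y(v)$ by the nontrivial paths out of $v$ that end in $Q'_0$ with last arrow outside $Q'_1$, together with the trivial path when $v\in Q'_0$, and then split this index set according to the first arrow $b\in Q^{v\rightarrow\ast}_1$ of each path. Since the arrows of $e^{Q'}_{\rho}(X)$ are the natural projections, this reindexing identifies $\psi_v^{Y}$, under the decomposition $Y(v)=X(v)\oplus\prod(\cdots)$ and a matching splitting of the target $\prod_{b\in Q^{v\rightarrow\ast}_1}Y(t(b))$, with a direct sum $\psi_v^{X}\oplus\theta_v$: the summand $\psi_v^{X}$ is the structure map of $X$ in $\text{Rep}(Q',R)$ (it contributes only the $Q'$-arrows out of $v$ and is present only when $v\in Q'_0$), while $\theta_v$ is an isomorphism collecting the remaining coinduced coordinates. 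As an isomorphism is a pure epimorphism, $\psi_v^{X}$ is a pure epimorphism by the first paragraph, and a direct sum of pure epimorphisms is again a pure epimorphism, I conclude that $\psi_v^{Y}$ is a pure epimorphism. With conditions (1) and (2) in hand, Theorem \ref{strongly FP right} yields that $Y=e^{Q'}_{\rho}(X)$ is strongly fp-injective in $\text{Rep}(Q,R)$.

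The main obstacle I anticipate is the bookkeeping in the third paragraph: correctly matching the coinduced coordinates of $Y(v)$ with those of the various $Y(t(b))$ through the projection arrows, and checking that the two off-diagonal blocks vanish so that $\psi_v^{Y}\cong\psi_v^{X}\oplus\theta_v$ is genuinely a direct sum. The key points are that the trivial path at $v$ never arises by prepending an arrow to a shorter path (so the honest summand $X(v)$ feeds only the $Q'$-arrow targets, via $\psi_v^{X}$), and that a length-one path $b$ with $b\in Q'_1$ is excluded from the coinduced coordinates (so the product part of $Y(v)$ never feeds those targets).
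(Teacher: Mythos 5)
Your proposal is correct, but it takes a genuinely different route from the paper's. The paper never opens up the coordinates of $e^{Q'}_{\rho}(X)$: it applies $e^{Q'}_{\rho}$ to a pure acyclic injective resolution of $X$ in $\text{Rep}(Q',R)$, observes that the image is an exact complex with injective terms, and then shows $\Ext^{i\geq 1}_{Q}(K,e^{Q'}_{\rho}(X))=0$ for every $K\in\text{Rep}(Q,\mathcal{K})$ by passing through the adjunction $\Hom_{Q}(K,e^{Q'}_{\rho}(-))\cong\Hom_{Q'}(e^{Q'}(K),-)$ and invoking the hereditary complete cotorsion pair $(\text{Rep}(Q',\mathcal{K}),\Psi(\mathcal{J}))$ of Lemma \ref{cotorsion pair KJ} over the subquiver $Q'$ (which inherits right rootedness and local target-finiteness); this places $e^{Q'}_{\rho}(X)$ in $\Psi(\mathcal{J})$, which is exactly the class of strongly fp-injective representations of $Q$ by Theorem \ref{strongly FP right}. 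So both proofs rest on Theorem \ref{strongly FP right} in the end, but you verify its two vertexwise conditions directly from the coinduction formula, whereas the paper verifies Ext-orthogonality against $\text{Rep}(Q,\mathcal{K})$. Your bookkeeping does go through: for $b\notin Q'_1$ the map $e^{Q'}_{\rho}(X)(b)$ is a coordinate projection (prepending $b$ to a path never yields the trivial path, so the summand $X(v)$ feeds nothing), and for $b\in Q'_1$ it acts as $X(b)$ on the distinguished summand and as a projection on the coinduced coordinates (a length-one path lying in $Q'_1$ is not a coinduced coordinate, so nothing else feeds $X(t(b))$); hence $\psi^{Y}_{v}\cong\psi^{X}_{v}\oplus\theta_{v}$ with $\theta_{v}$ a reindexing isomorphism, and a finite direct sum of pure epimorphisms is a pure epimorphism. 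One caveat worth recording: for $b\in Q'_1$ the structure map of $e^{Q'}_{\rho}(X)$ is \emph{not} literally a ``natural projection'' despite the paper's wording---that it equals $X(b)$ on the summand is forced by adjointness, and your identification of that block with $\psi^{X}_{v}$ silently uses this. As for what each approach buys: yours is elementary and self-contained (no cotorsion-pair input from \cite{HP19} and \cite{O19}, no Ext computations) and makes the local structure of coinduced representations completely explicit, and it does not even need $Q'$ to be right rooted; the paper's argument avoids all path combinatorics and follows the robust ``right adjoint preserves an orthogonality class'' template---the same pattern showing that right adjoints preserve injectives---at the cost of heavier machinery.
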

\begin{proof} Since $X$ is strongly fp-injective in $\text{Rep}(Q', R)$, there is a pure exact sequence
$$\xymatrix{0\ar[r]^{} &X \ar[r]^{} & E^0 \ar[r]^{} & E^1 \ar[r]^{} &\cdots  }$$
in $\text{Rep}(Q', R)$ with each $E^i$ injective.
This induces that the sequence
$$\xymatrix{0\ar[r]^{} &e^{Q'}_{\rho}(X) \ar[r]^{} & e^{Q'}_{\rho}(E^0) \ar[r]^{} & e^{Q'}_{\rho}(E^1) \ar[r]^{} & \cdots   }\eqno{(3.2)}$$
in $\text{Rep}(Q, R)$ is exact with each $e^{Q'}_{\rho}(E^i)$ injective. To see that $e^{Q'}_{\rho}(X)$ is strongly fp-injective in $\text{Rep}(Q, R)$,
it suffice to show that the sequence $(3.2)$ remains exact after applying the functor $\Hom_{Q}(K, -)$
 for any representation $K\in\text{Rep}(Q,\mathcal{K})$, see Lemma \ref{cotorsion pair KJ}, where $\mathcal{K}$
is  the class of all weakly fp-projective  $R$-modules.
Noting  that $Q'$ is right rooted as so is $Q$, we get by Lemma \ref{cotorsion pair KJ} that the cotorsion pair
$(\text{Rep}(Q',\mathcal{K}), \Psi(\mathcal{J}))$
is hereditary complete in $\text{Rep}(Q', R)$,  where $\mathcal{J}$
is  the class of all  strongly fp-injective $R$-modules. Moreover, it is clear that $e^{Q'}(K)$ belongs to
 $\text{Rep}(Q',\mathcal{K})$. Thus, we have the following commutative diagram of representations with the top row exact:
$$\xymatrix@C=5mm{
  0 \ar[r]^{} & \Hom_{Q'}(e^{Q'}(K), X) \ar[d]_{\cong} \ar[r]^{} & \Hom_{Q'}(e^{Q'}(K), E^0) \ar[d]_{\cong} \ar[r]^{} & \Hom_{Q'}(e^{Q'}(K), E^1) \ar[d]_{\cong} \ar[r]^{} & \cdots \\
  0 \ar[r]^{} & \Hom_{Q}(K, e^{Q'}_{\rho}(X)) \ar[r]^{} & \Hom_{Q}(K, e^{Q'}_{\rho}(E^0)) \ar[r]^{} & \Hom_{Q}(K, e^{Q'}_{\rho}(E^1)) \ar[r]^{} & \cdots  }$$
Now  the adjoint isomorphism $$\Hom_{Q'}(e^{Q'}(K), Y)\cong\Hom_{Q}(K, e^{Q'}_{\rho}(Y)),$$ for any representation $Y\in\text{Rep}(Q,R)$, yields that the bottom row is also exact,  and this implies that $\Ext_{Q}^{i\geq 1}(K, e^{Q'}_{\rho}(X))=0$, that is, $e^{Q'}_{\rho}(X)\in\Psi(\mathcal{J})$ is strongly fp-injective in $\text{Rep}(Q, R)$.
\end{proof}

The above result gives the following example.

\begin{exa} Let  $Q$ be a {locally target-finite and} right rooted quiver, and $v\in Q_0$ a vertex. If $M$ is a strongly fp-injective $R$-module, then $e^v_{\rho}(M)$ is a  strongly fp-injective  representation of  $Q$.
\end{exa}

The following result is interesting independently.

\begin{prop} \label{Stability} A representation $X$ in $\text{Rep}(Q, R)$ is
strongly fp-injective if and only if there
exists a pure exact sequence of representations $0 \to X \to T^0
\to T^1\to T^2 \to \cdots$ with $T^i$ strongly fp-injective in $\text{Rep}(Q,R)$ for all $i\geq 0$.
\end{prop}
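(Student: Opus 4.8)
The plan is to treat only the nontrivial implication, the other being immediate: a pure acyclic injective resolution of $X$ is in particular a pure exact sequence $0\to X\to T^0\to T^1\to\cdots$ whose terms are injective, hence strongly fp-injective. Write $\mathcal{T}$ for the class of strongly fp-injective representations, and let $\mathcal{S}$ be the class of representations $Y$ admitting a pure exact sequence $0\to Y\to T^0\to T^1\to\cdots$ with all $T^i\in\mathcal{T}$. Since $\mathcal{T}\subseteq\mathcal{S}$, the task is to show $\mathcal{S}\subseteq\mathcal{T}$, i.e. that every $X\in\mathcal{S}$ has a genuinely \emph{pure} injective resolution.

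Everything rests on a one-step reduction: for $X\in\mathcal{S}$ I will build a pure exact sequence $0\to X\to E\to D\to 0$ with $E$ injective and $D$ again in $\mathcal{S}$. Iterating this and splicing the resulting short exact sequences then yields a pure acyclic injective resolution of $X$, so $X\in\mathcal{T}$. To get one step, I start from a pure exact $0\to X\to T^0\to C\to 0$ extracted from a pure $\mathcal{T}$-coresolution of $X$; here $T^0\in\mathcal{T}$ and $C\in\mathcal{S}$, since $C$ is the first cosyzygy and the truncated sequence is again a pure $\mathcal{T}$-coresolution. The definition of $\mathcal{T}$ supplies a pure exact $0\to T^0\to E\to W\to 0$ with $E$ injective and $W\in\mathcal{T}$. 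The composite $X\to T^0\to E$ is then a pure monomorphism, giving a pure exact $0\to X\to E\to D\to 0$; a Snake-Lemma diagram, chased with $S\otimes_Q-$ exactly as in the proof of Lemma \ref{pure}, shows in addition that $0\to C\to D\to W\to 0$ is pure.

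The heart of the matter, and the step I expect to be the main obstacle, is the closure statement: if $0\to A\to B\to C\to 0$ is pure exact with $A\in\mathcal{S}$ and $C\in\mathcal{T}$, then $B\in\mathcal{S}$. Applied to the sequence $0\to C\to D\to W\to 0$ found above, with $C\in\mathcal{S}$ and $W\in\mathcal{T}$, this yields $D\in\mathcal{S}$ and closes the reduction. The difficulty is that the naive pure Horseshoe Lemma is unavailable, since the map from $A$ into the first term of its $\mathcal{T}$-coresolution need not extend along $A\to B$. I will sidestep this with a pushout. Choosing a pure exact $0\to A\to T\to A'\to 0$ with $T\in\mathcal{T}$ and $A'\in\mathcal{S}$, I form the pushout of $A\to B$ along $A\to T$ to obtain $P$ sitting in an exact sequence $0\to T\to P\to C\to 0$ and a short exact sequence $0\to B\to P\to A'\to 0$. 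The former yields $P\in\mathcal{T}$ by closure of $\mathcal{T}$ under extensions (Proposition \ref{closeprop}); the latter is pure because $A\to T$ is a pure monomorphism, as one verifies directly after applying $S\otimes_Q-$ to the pushout square. Splicing $0\to B\to P\to A'\to 0$ with a pure $\mathcal{T}$-coresolution of $A'$ then exhibits a pure $\mathcal{T}$-coresolution $0\to B\to P\to T^0_{A'}\to T^1_{A'}\to\cdots$ of $B$, so $B\in\mathcal{S}$. Since this scheme uses only purity together with the closure properties already proved, it needs no hypothesis on the quiver $Q$.
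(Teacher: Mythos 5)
Your proof is correct, and it is essentially the paper's own argument: the paper's proof of Proposition \ref{Stability} performs exactly your two pushout constructions in one explicit induction along the given coresolution --- its $U^i$, $J^i$, $S^i$ are your $D$, $P$, $W$ --- with the same appeal to extension-closure (Proposition \ref{closeprop}) and the same tensor-purity verifications. The only difference is organizational: you isolate the second pushout as a standalone closure lemma for the class $\mathcal{S}$ and phrase the first as a one-step reduction, whereas the paper runs both pushouts inside a single induction.
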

\begin{proof} The necessity is trivial. For the sufficiency, split the sequence
$0 \to X \to T^0 \to T^1 \to \cdots$ as pure short exact sequences $0\to X^i\to T^i \to X^{i+1}\to 0$
with $X^i=\Ker(T^i\to T^{i+1})$ and $i\geq 0$. Since $T^0$ is strongly fp-injective,
It follows by the definition that there is a
pure exact sequence $0\to T^0\to I^0 \to S^{1}\to 0$
with $I^0$ injective and $S^1$ strongly fp-injective. Consider the following push-out
diagram:
$$\xymatrix{
   &  & 0 \ar[d]_{}  & 0 \ar[d]_{ }  &  \\
 0  \ar[r]^{ } & X \ar@{=}[d]_{ } \ar[r]^{ } & T^0 \ar[d]_{}
  \ar[r]^{ } & X^1 \ar[d]_{ } \ar[r]^{ } & 0 \\
 0  \ar[r]^{ } & X \ar[r]^{} & I^0 \ar[d]_{}
  \ar[r]^{ } & U^1 \ar[d]_{ } \ar[r]^{ } & 0 \\
   &   & S^1 \ar[d]_{ }
  \ar@{=}[r]^{ } & S^1 \ar[d]_{ } & \\
   &   & 0 & 0 &   }
$$
We can check readily  that both the sequences $0\to X\to I^0 \to U^1\to 0$ and $0\to X^1\to U^1 \to S^1\to 0$
are pure exact. Now consider the following push-out diagram:
$$\xymatrix{ & 0 \ar[d]_{ }  & 0 \ar[d]_{ }
   &   &   \\
  0  \ar[r]^{ } & X^1 \ar[d]_{ } \ar[r]^{ } & U^1 \ar[d]_{ }
  \ar[r]^{ } & S^1 \ar@{=}[d]_{ } \ar[r]^{ } & 0 \\
  0  \ar[r]^{ } & T^1 \ar[d]_{ } \ar[r]^{ } & J^1 \ar[d]_{ }
  \ar[r]^{ } & S^1\ar[r]^{ } &0 \\
    & X^2 \ar[d]_{ } \ar@{=}[r]^{ } & X^2 \ar[d]_{ }&   &   \\
    & 0                             & 0   &   &    }
$$

Visibly, $J^1$ is strongly fp-injective since the class of
strongly fp-injective representations is closed under extensions by Proposition \ref{closeprop}.
Moreover,  the sequence $0\to T^1\to J^1 \to S^1\to 0$ is
pure exact as a push-out of two pure exact sequences. By applying the functor $Y\otimes_Q-$ for any representation $Y\in\text{Rep}(Q^{op}, R^{op})$, one gets by the Horse-Shoe Lemma that the sequence $0\to U^1\to J^1 \to X^2\to 0$
is pure exact. Since $J^1$ is strongly fp-injective, there exists a pure exact sequence $0\to J^1\to I^1 \to S^2\to 0$ such that
$I^1$ is injective and $S^2$ is strongly  fp-injective.
Again consider the following push-out diagram:
$$\xymatrix{
   &  & 0 \ar[d]_{}  & 0 \ar[d]_{ }  &  \\
 0  \ar[r]^{ } & U^1 \ar@{=}[d]_{ } \ar[r]^{ } & J^1 \ar[d]_{}
  \ar[r]^{ } & X^2 \ar[d]_{ } \ar[r]^{ } & 0 \\
 0  \ar[r]^{ } & U^1 \ar[r]^{} & I^1 \ar[d]_{}
  \ar[r]^{ } & U^2 \ar[d]_{ } \ar[r]^{ } & 0 \\
   &   & S^2 \ar[d]_{ }
  \ar@{=}[r]^{ } & S^2 \ar[d]_{ } & \\
   &   & 0 & 0 &   }
$$ It is easy to see that the sequence $0\to U^1\to I^1 \to U^2\to 0$ is pure exact.
Assembling the sequences $0\to X\to I^0 \to U^1\to 0$ and $0\to U^1\to I^1 \to U^2\to 0$,  we then have a pure
exact sequence $0\to X\to I^0 \to I^1\to U^2\to 0$  with $I^0$ and $I^1$ injective.
Inductively, we will obtain a pure exact
sequence $0\to X\to I^0 \to I^1\to I^2\to \cdots$
 with $I^i$  injective in $\text{Rep}(Q, R)$ for all $i\geq 0$, as desired.
\end{proof}

\begin{rem} If one replaces the strongly fp-injective representations $T^i$ with the
fp-injective ones in Proposition \ref{Stability}, then  $X$ is not necessarily strongly
fp-injective in $\text{Rep}(Q,R)$. In fact, let $X$ be an fp-injective
representation that is not strongly fp-injective, then one has a pure exact
sequence $0 \to X \to X\to 0 \to \cdots$.
\end{rem}


Our next result relies on \cite[Theorem 5.1]{BCE20} proved by Bazzoni, Cortes-Izurdiaga and Estrada that every acyclic complex of injective modules with fp-injective cycles
is contractible. It is evident that for an acyclic complex of injective modules  the cycles are fp-injective if and only if the cycles are strongly fp-injective.

\begin{prop} Let $Q$ be a {locally target-finite and} right rooted quiver, and $X$ a strongly fp-injective representation in $\text{Rep}(Q,R)$. If there is a pure exact sequence
$\cdots\to I^{-2}\to I^{-1}\to I^{0}\to X\to 0$ with  $ I^{i}$ injective for all $i\leq 0$, then $X$ is injective.
\end{prop}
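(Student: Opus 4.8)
The plan is to splice the two one-sided resolutions into a single doubly unbounded acyclic complex of injective representations with strongly fp-injective cycles, and then to reduce to the module-level theorem of Bazzoni, Cort\'es-Izurdiaga and Estrada at every vertex, applied \emph{twice}: once to the componentwise complex and once to the complex of kernels of the canonical maps. First I would use that $X$, being strongly fp-injective, admits a pure acyclic injective coresolution $0\to X\to E^0\to E^1\to\cdots$. Splicing this with the given pure exact sequence $\cdots\to I^{-1}\to I^0\to X\to 0$ at $X$ yields an acyclic complex
$$\mathbf I:\quad \cdots\to I^{-1}\to I^0\to E^0\to E^1\to\cdots$$
of injective representations in which $X$ is the cycle at the splice. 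Breaking $\mathbf I$ into its pure short exact sequences of cycles, every cycle $Z^n$ of $\mathbf I$ is strongly fp-injective: on the coresolution side by definition, and on the resolution side by downward induction using Proposition \ref{closed p1}. In particular each $\psi_v^{Z^n}$ is a (pure) epimorphism by Theorem \ref{strongly FP}.

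Next I would fix $v\in Q_0$ and evaluate $\mathbf I$ at $v$. Since $Q$ is right rooted, every term $I^i(v),E^j(v)$ is an injective $R$-module by Lemma \ref{proflatfp-injective}, and since the cycles of $\mathbf I$ are strongly fp-injective representations, their values at $v$ are strongly (hence ordinarily) fp-injective $R$-modules by Theorem \ref{strongly FP}. Thus $\mathbf I(v)$ is an acyclic complex of injective $R$-modules with fp-injective cycles, so by \cite[Theorem 5.1]{BCE20} it is contractible; in particular its cycle $X(v)$ is an injective $R$-module for every vertex $v$.

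It then remains, by the right-rooted converse in Lemma \ref{proflatfp-injective}, only to upgrade the pure epimorphism $\psi_v^X$ to a splitting epimorphism, equivalently to show $\Ker\psi_v^X$ is injective. For this I would apply, for each fixed $v$, the functor $K_v:=\Ker(\psi_v^{-})$ to $\mathbf I$. A snake-lemma computation (the product functor being exact) shows that $K_v$ is exact on any short exact sequence of representations all three of whose canonical maps $\psi_v$ are epimorphic; as this holds for every short exact sequence of cycles of $\mathbf I$, the complex $K_v(\mathbf I)$ is again acyclic. Its terms $\Ker\psi_v^{I^i}$ and $\Ker\psi_v^{E^j}$ are injective $R$-modules, being direct summands of the injectives $I^i(v),E^j(v)$ by Lemma \ref{proflatfp-injective}, while its cycles $\Ker\psi_v^{Z^n}$ are strongly fp-injective $R$-modules by the characterization in Theorem \ref{strongly FP right} and Corollary \ref{strongly ccor}, hence fp-injective. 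Invoking \cite[Theorem 5.1]{BCE20} once more makes $K_v(\mathbf I)$ contractible, so its cycle $\Ker\psi_v^X$ is injective. Consequently the exact sequence $0\to\Ker\psi_v^X\to X(v)\xrightarrow{\psi_v^X}\prod_{a\in Q^{v\rightarrow\ast}_1}X(t(a))\to0$ has injective kernel, hence splits, so $\psi_v^X$ is a splitting epimorphism; combined with the injectivity of each $X(v)$ this gives that $X$ is injective.

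The step I expect to be the main obstacle is precisely the injectivity of $\Ker\psi_v^X$. It is \emph{not} a cycle of the componentwise complex $\mathbf I(v)$, so the first application of \cite[Theorem 5.1]{BCE20} does not reach it; the point is to observe that $K_v$ is exact along $\mathbf I$ and carries injective representations to injective modules and strongly fp-injective cycles to strongly fp-injective modules, so that \cite[Theorem 5.1]{BCE20} can be applied a second time to the transformed complex.
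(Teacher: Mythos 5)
Your proposal is correct and follows essentially the same route as the paper: splice the two pure resolutions into a doubly unbounded acyclic complex of injective representations, use Propositions \ref{closeprop1} and \ref{closed p1} to see that all cycles are strongly fp-injective, and then apply \cite[Theorem 5.1]{BCE20} vertexwise---both to the evaluation complex and to the complex of kernels of the canonical maps $\psi_v$---before concluding via the right-rooted characterization of injective representations. The only difference is cosmetic: where you run the snake-lemma argument for the exactness of $\Ker(\psi_v^{-})$ on the spliced complex by hand, the paper compresses this step into a citation of \cite[Proposition 2.1]{EEGR09}.
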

\begin{proof} Since $X$ is strongly fp-injective, one has a pure  exact sequence of representations
$0\to X\to I^{1}\to I^{2}\to I^3\to \cdots$ with $ I^{i}$ injective for all $i\geq 0$, and then an acyclic complex
$\cdots\to I^{-1}\to I^{0}\to I^{1}\to \cdots$ of injective  representations. Thus, we infer from Propositions \ref{closeprop1} and \ref{closed p1} that all its  cycles are  strongly fp-injective.
Now it follows from Theorem \ref{strongly FP}, \cite[Proposition 2.1]{EEGR09} and \cite[Theorem 5.1]{BCE20} that  $X(v)$ and $\Ker(\psi^X_v)$ are injective
$R$-modules, $\psi^X_v$ is an epimorphism for each vertex $v\in Q_0$. As the quiver $Q$ is right rooted, it follows
from \cite[Theorem 4.2]{EEGR09} that $X$ is injective
in $\text{Rep}(Q,R)$.
\end{proof}


\section{ Relative Gorenstein injective objects}
 Let $\mathcal{B}$ be a subcategory of an abelian category $\mathcal{A}$.
 Following \cite{GI22}, we will say that a chain complex $X$ of  objects in $\mathcal{A}$ is $\Hom_{\mathcal{A}}(\mathcal{B}, -)$-acyclic
 if  $\Hom_{\mathcal{A}}(B, X)$ is an acyclic complex of abelian groups for all $B\in\mathcal{B}$.
 If $X$ itself is also exact we will say  that $X$ is an exact $\Hom_{\mathcal{A}}(\mathcal{B}, -)$-acyclic complex.
 We say an object $N$ Gorenstein $\mathcal{B}$-injective if  $N =\Z_0E$ for some exact $\Hom_{\mathcal{A}}(\mathcal{B}, -)$-acyclic complex $E$
 of injective objects. In particular, when $\mathcal{B}$ is the subcategory of all injective (respectively, fp-injective) $R$-modules in $R$-Mod,
 then the definition recovers the usual Gorenstein (respectively, Ding) injective $R$-modules, consult \cite{EJ95, Holm04} and \cite{DM08, YLL10} for more details about Gorenstein and Ding injective $R$-modules, respectively.

\begin{nota} In the following we let $\mathcal{J}$ denote the class of all strongly fp-injective $R$-modules,
and $\mathcal{GI}$ denote the class of all Gorenstein $\mathcal{J}$-injective (which we call \emph{Gorenstein strongly fp-injective}) $R$-modules,
and we set $\mathcal{W}={^\bot\mathcal{GI}},$ where ${^\bot\mathcal{GI}}$ is defined as the class of all $R$-modules $W$ satisfying $\Ext^1_R(W, G)=0$ for any $G\in \mathcal{GI}$. We would like to point out that the subcategory of Gorenstein strongly fp-injective
$R$-modules shares many nice properties of the one of Gorenstein injective $R$-modules, this can be proved by a series of
similar arguments used to show Gorenstein $\mathcal{B}$-injective $R$-modules, one of them is restated as the following without proof,  see \cite[Lemma 16]{GI22} for details.
\end{nota}

\begin{lem}\label{close products}The subcategory $\mathcal{GI}$ of all Gorenstein strongly fp-injective $R$-modules is closed under direct products and direct summands.
\end{lem}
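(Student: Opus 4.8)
The plan is to prove the two closure properties separately, obtaining products by forming a direct product of complete resolutions and summands by lifting an idempotent of the cycle module to the whole resolving complex. Recall that $N\in\mathcal{GI}$ means $N=\Z_0 E$ for some exact complex $E$ of injective $R$-modules that is $\Hom_R(\mathcal{J},-)$-acyclic.

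First I would settle the product. Given a family $\{N_\lambda\}_{\lambda\in\Lambda}$ in $\mathcal{GI}$, choose for each $\lambda$ an exact $\Hom_R(\mathcal{J},-)$-acyclic complex $E_\lambda$ of injectives with $\Z_0 E_\lambda=N_\lambda$, and form the degreewise product $E=\prod_\lambda E_\lambda$. Each term $\prod_\lambda (E_\lambda)_n$ is injective, since a product of injective $R$-modules is injective over any ring; $E$ is exact because direct products are exact in $R$-Mod, whence $\Z_0 E=\prod_\lambda \Z_0 E_\lambda=\prod_\lambda N_\lambda$ (kernels commute with products); and for every $J\in\mathcal{J}$ one has $\Hom_R(J,E)\cong\prod_\lambda\Hom_R(J,E_\lambda)$, a product of acyclic complexes of abelian groups, hence acyclic. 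Thus $E$ witnesses $\prod_\lambda N_\lambda\in\mathcal{GI}$.

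For summands, let $M=N\oplus N'\in\mathcal{GI}$ be witnessed by $E$ with $\Z_0 E=M$, and let $e\in\mathrm{End}_R(M)$ be the idempotent with image $N$. One cannot split $E$ directly along $M=N\oplus N'$, so the idea is to promote $e$ to a chain endomorphism. Write $E$ as $\cdots\to E_{n+1}\xrightarrow{d_{n+1}}E_n\xrightarrow{d_n}E_{n-1}\to\cdots$ with cycles $\Z_n$; reading off the exactness of $\Hom_R(J,E)$ shows $\Ext^{i}_R(J,\Z_n)=0$ for all $i\geq 1$ and all $J\in\mathcal{J}$. Since every injective module lies in $\mathcal{J}$ (its positive Ext against anything vanishes), one may lift $e$ to a chain map $\tilde e\colon E\to E$ with $\Z_0(\tilde e)=e$: in nonpositive degrees by extending along the monomorphisms $\Z_n\hookrightarrow E_n$ into the injective terms, and in positive degrees by lifting against the epimorphisms $E_n\twoheadrightarrow\Z_{n-1}$, which is possible because $\Ext^1_R(E_n,\Z_n)=0$. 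The comparison argument also makes such a lift unique up to homotopy.

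The finish, which is also the main obstacle, is to convert $\tilde e$ into an \emph{honest} direct summand of complexes. Since $\tilde e$ and $\tilde e^{2}$ both lift $e=e^{2}$ and lifts are unique up to homotopy, $\tilde e^{2}\simeq\tilde e$; that is, $\tilde e$ is idempotent only in the homotopy category $\mathbf{K}$, and a homotopy-idempotent need not split on the nose. I would resolve this by invoking that $\mathbf{K}$ (the homotopy category of complexes of injectives) is idempotent complete, or more concretely by the eventual-image construction that manufactures a genuine idempotent chain endomorphism homotopic to $\tilde e$. Either route produces an actual summand complex $E'$ of $E$; then $E'$ consists of injectives (summands of injectives are injective), is exact (homology is additive and $H_n(E)=0$), is $\Hom_R(\mathcal{J},-)$-acyclic (a summand of the acyclic $\Hom_R(J,E)$), and satisfies $\Z_0 E'=\im(\Z_0\tilde e)=\im e=N$, so $N\in\mathcal{GI}$. (Were a complete hereditary cotorsion pair with $\mathcal{GI}$ as its right-hand class available, summand closure would be automatic, but I do not assume that here.)
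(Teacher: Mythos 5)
Your product argument is correct and is the standard one (products of injectives are injective, products are exact in $R$-Mod and in $\mathbb{Z}$-Mod, and $\Hom_R(J,-)$ preserves products); your chain-level lift of $e$ to $\tilde e$ is also sound, since each $E_n$ is injective, hence lies in $\mathcal{J}$, so the $\Hom_R(\mathcal{J},-)$-acyclicity of $E$ gives the needed vanishing $\Ext^1_R(E_n,\Z_n E)=0$. The genuine gap is the final strictification step, and neither of your proposed fixes closes it. Splitting the homotopy idempotent in $\mathbf{K}(\mathrm{Inj})$ (which is indeed idempotent complete, having products) produces $E'$ only up to homotopy equivalence, and the cycle functor $\Z_0$ is \emph{not} a homotopy invariant on exact complexes: the contractible complex $\cdots\to 0\to I\xrightarrow{\ 1\ } I\to 0\to\cdots$ of injectives is a zero object of $\mathbf{K}(\mathrm{Inj})$ yet has $\Z_0=I$. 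So a homotopy retract of $E$ carries no information about its cycles, and $\Z_0E'\cong N$ does not follow. As for the second fix, there is no general principle for lifting idempotents along $\mathrm{End}_{\mathbf{Ch}}(E)\to\mathrm{End}_{\mathbf{K}}(E)$ (this ideal even contains nonzero idempotents, so it is not nil), and, worse, even if some strict idempotent chain map $f\simeq\tilde e$ existed, writing $f-\tilde e=ds+sd$ shows that on cycles $f(z)-e(z)=d(s(z))$, i.e.\ $f|_{\Z_0E}$ agrees with $e$ only up to a map factoring through the injective module $E_1$. Hence $\Z_0(fE)=f(\Z_0E)$ is identified with $N$ only up to injective direct summands, and removing those summands is exactly the summand closure you are trying to prove: the argument is circular.

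For comparison: the paper offers no proof at all, deferring to \cite[Lemma 16]{GI22}, and the argument there avoids touching the resolution. It is Eilenberg's swindle (the dual of Holm's Proposition 1.4 in \cite{Holm04}): once $\mathcal{GI}$ is known to be closed under countable products (your first half), under extensions (Horseshoe Lemma), and under cokernels of monomorphisms, then $M=N\oplus N'$ gives $M^{\mathbb{N}}\cong N\oplus(N'\oplus N)^{\mathbb{N}}\cong N\oplus M^{\mathbb{N}}$, hence an exact sequence $0\to M^{\mathbb{N}}\to M^{\mathbb{N}}\to N\to 0$ with both left terms in $\mathcal{GI}$, and cokernel closure yields $N\in\mathcal{GI}$. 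Alternatively, summand closure is automatic once one knows $(\mathcal{W},\mathcal{GI})$ is a cotorsion pair, as in \cite[Proposition 24]{GI22} (used in the paper's Lemma \ref{cotorsion pair}), since an $\Ext$-orthogonal class is closed under summands; this is precisely the route you acknowledged but declined to take. Your proof of product closure stands; the summand half should be replaced by one of these arguments.
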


It is clearly seen that any Gorenstein strongly fp-injective $R$-module is Gorenstein injective,
but the converse is not true in general. By \cite[Theorem 3.5.1]{EHS13}, it follows that a representation
$X$ is Gorenstein injective in $\text{Rep}(Q,R)$ if and only if $X\in \Psi(\mathcal{G})$ whenever $Q$ is right rooted,
where  $\mathcal{G}$ denotes the subcategory of all Gorenstein injective $R$-modules. We show that the similar characterization is possessed by Gorenstein strongly fp-injective representations.

\begin{df} We say a representation $X$ in $\text{Rep}(Q, R)$  Gorenstein strongly fp-injective (respectively,  Gorenstein injective,  Ding injective) if there is an acyclic complex $$\xymatrix{\cdots\ar[r]^{}& I_2 \ar[r]^{f_2}  & I_1 \ar[r]^{f_1} &I_0 \ar[r]^{f_0} & I_{-1} \ar[r]^{f_{-1}} & I_{-2} \ar[r]& \cdots  }$$ of injective representations with $X=\Ker(f_{-1})$, which remains exact after applying the functor $\Hom_{Q}(J,-)$ for any strongly fp-injective (respectively,  injective,  fp-injective) representation $J$.
\end{df}

\begin{lem} \label{cotorsion pair} Let $Q$ be a  right rooted quiver. Then the pair $(\text{Rep}(Q,\mathcal{W}),\Psi(\mathcal{GI}))$ is a hereditary cotorsion pair with $\text{Rep}(Q,\mathcal{W})$ thick in $\text{Rep}(Q,R)$.
\end{lem}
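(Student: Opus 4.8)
The plan is to derive this lemma from the corresponding cotorsion pair in $R$-Mod and to lift it to $\text{Rep}(Q,R)$ by the same machinery of Holm--J{\o}rgensen \cite{HP19} and Odabasi \cite{O19} that was invoked for the pair $(\text{Rep}(Q,\mathcal{K}),\Psi(\mathcal{J}))$ in Lemma \ref{cotorsion pair KJ}. Thus I would split the argument into three stages: first establish the module-level pair $(\mathcal{W},\mathcal{GI})$ in $R$-Mod together with thickness of $\mathcal{W}$; then feed it into the lifting theorems to obtain $(\text{Rep}(Q,\mathcal{W}),\Psi(\mathcal{GI}))$ as a hereditary cotorsion pair; and finally transfer thickness from $\mathcal{W}$ to $\text{Rep}(Q,\mathcal{W})$ vertexwise.

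First I would record that $(\mathcal{W},\mathcal{GI})$ is a hereditary complete cotorsion pair in $R$-Mod whose left-hand class $\mathcal{W}={}^{\bot}\mathcal{GI}$ is thick. This is the relative analogue of the Gorenstein injective cotorsion pair and is obtained by the arguments that establish the properties of Gorenstein $\mathcal{B}$-injective modules in \cite{GI22} (see the Notation preceding Lemma \ref{close products}); here one uses that $\mathcal{GI}$ is closed under direct products and summands by Lemma \ref{close products}, together with the completeness already available for the pair $(\mathcal{K},\mathcal{J})$. Being the left class of a cotorsion pair, $\mathcal{W}$ is automatically closed under extensions and direct summands, and heredity of the pair gives closure of $\mathcal{W}$ under kernels of epimorphisms; thickness then reduces to the remaining closure of $\mathcal{W}$ under cokernels of monomorphisms, which is part of the same \cite{GI22}-type package.

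With the module-level pair in hand, I would invoke \cite[Theorem B]{HP19} and \cite[Theorem B]{O19}: since $Q$ is right rooted, lifting the hereditary complete cotorsion pair $(\mathcal{W},\mathcal{GI})$ yields the hereditary cotorsion pair $(\text{Rep}(Q,\mathcal{W}),\Psi(\mathcal{GI}))$ in $\text{Rep}(Q,R)$, where $\text{Rep}(Q,\mathcal{W})$ is the class of representations with every component in $\mathcal{W}$. This is precisely the mechanism used for Lemma \ref{cotorsion pair KJ}, and heredity is preserved under the lift. Thickness of $\text{Rep}(Q,\mathcal{W})$ is then inherited from that of $\mathcal{W}$: short exact sequences and direct summands in $\text{Rep}(Q,R)$ are formed at each vertex, and $X\in\text{Rep}(Q,\mathcal{W})$ exactly when $X(v)\in\mathcal{W}$ for every $v\in Q_0$, so the two-out-of-three property and closure under summands for $\text{Rep}(Q,\mathcal{W})$ follow immediately by applying the corresponding properties of $\mathcal{W}$ vertexwise. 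I expect the main obstacle to be the first stage, namely verifying that $(\mathcal{W},\mathcal{GI})$ is genuinely a hereditary complete cotorsion pair in $R$-Mod with $\mathcal{W}$ thick and that the hypotheses of the lifting theorems are satisfied; once this is secured, the passage to representations and the transfer of thickness are formal.
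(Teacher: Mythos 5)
Your proposal is correct and takes essentially the same route as the paper: the paper's proof simply cites \cite[Propostion 24]{GI22} for the hereditary cotorsion pair $(\mathcal{W},\mathcal{GI})$ with $\mathcal{W}$ thick in $R$-Mod, and then applies \cite[Theorem B]{HP19} to lift it to $(\text{Rep}(Q,\mathcal{W}),\Psi(\mathcal{GI}))$ over the right rooted quiver $Q$, which is exactly your first two stages (with your vertexwise thickness transfer left implicit in that citation). Your extra appeal to completeness and to \cite[Theorem B]{O19} is superfluous for this lemma, since neither the statement nor the lifting of the hereditary pair with thick left class requires it.
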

\begin{proof}
By \cite[Propostion 24]{GI22}, the pair $(\mathcal{W},\mathcal{GI})$ is a hereditary cotorsion pair with $\mathcal{W}$ thick in the category of $R$-modules. Then \cite[Theorem B]{HP19} applies.
\end{proof}

Now we are in a position to give the main result in this section.

\begin{thm} \label{main Gorens} Let $Q$ be a {locally target-finite and} right rooted quiver. Then a representation $X$ in $\text{Rep}(Q,R)$ is Gorenstein strongly fp-injective if and only if $X\in\Psi(\mathcal{GI})$.
\end{thm}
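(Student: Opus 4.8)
The plan is to reinterpret both implications through the cotorsion pair of Lemma~\ref{cotorsion pair} and to reduce the homological condition in the definition of a Gorenstein strongly fp-injective representation to a purely ``cycle-membership'' condition. The crucial preliminary observation is that $\mathcal{J}\subseteq\mathcal{W}$: if $L$ is strongly fp-injective and $G\in\mathcal{GI}$, writing $G=\Z_0(\mathbf{E})$ for a $\Hom_R(\mathcal{J},-)$-acyclic complex $\mathbf{E}$ of injective modules and using the exact sequence $0\to G\to E^0\to G^1\to 0$, a short dimension shift shows $\Ext^1_R(L,G)=0$, precisely because $\Hom_R(L,\mathbf{E})$ is exact. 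Consequently every $J\in\mathcal{J}_{rep}:=\Psi(\mathcal{J})$ (the class of strongly fp-injective representations, by Corollary~\ref{strongly ccor}) has all components in $\mathcal{W}$, so $\mathcal{J}_{rep}\subseteq\text{Rep}(Q,\mathcal{W})={}^{\bot}\Psi(\mathcal{GI})$. Since the cotorsion pair is hereditary, $\Ext^{\ge 1}_Q(J,B)=0$ for all $J\in\mathcal{J}_{rep}$ and $B\in\Psi(\mathcal{GI})$. Hence any exact complex of injective representations whose cycles all lie in $\Psi(\mathcal{GI})$ is automatically $\Hom_Q(\mathcal{J}_{rep},-)$-acyclic; this single fact will carry the $\Hom$-exactness in both directions. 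I will also use the standard dimension shift that an acyclic complex of injective $R$-modules is $\Hom_R(\mathcal{J},-)$-acyclic exactly when all of its cycles are right $\Ext^1$-orthogonal to $\mathcal{J}$.

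For the necessity, let $X=\Z_0(\mathbf I)$ with $\mathbf I$ a $\Hom_Q(\mathcal{J}_{rep},-)$-acyclic complex of injective representations. Evaluating at a vertex $v$ gives an acyclic complex $\mathbf I(v)$ of injective $R$-modules (Lemma~\ref{proflatfp-injective}) with cycle $X(v)$; since each $\psi_v^{I_n}$ is a split epimorphism, the termwise-split sequence $0\to\Ker\psi_v^{\mathbf I}\to\mathbf I(v)\xrightarrow{\psi_v^{\mathbf I}}\prod_{a\in Q^{v\rightarrow\ast}_1}\mathbf I(t(a))\to 0$ has injective outer terms, and passing to degree-$0$ cycles yields $0\to\Ker\psi_v^X\to X(v)\xrightarrow{\psi_v^X}\prod X(t(a))\to 0$, so $\psi_v^X$ is epic. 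It remains to see $X(v),\Ker\psi_v^X\in\mathcal{GI}$. By the dimension shift, $\mathbf I(v)$ is $\Hom_R(\mathcal{J},-)$-acyclic as soon as $\Ext^1_R(L,X(v))=0$ for every $L\in\mathcal{J}$. Writing $\ell_v$ for the exact left adjoint of the evaluation functor $\mathrm{ev}_v$ (which preserves projectives), one has $\Ext^1_R(L,X(v))\cong\Ext^1_Q(\ell_v(L),X)$; I will show $\ell_v(L)$ has a finite coresolution by objects of $\mathcal{J}_{rep}$, whence $\Ext^1_Q(\ell_v(L),X)=0$ because $\Ext^{\ge1}_Q(\mathcal{J}_{rep},X)=0$ (the latter by the same dimension shift applied to $\mathbf I$). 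Thus $X(v)\in\mathcal{GI}$, and the same holds at every target $t(a)$; applying $\Hom_R(L,-)$ to the termwise-split sequence above and using that its middle and right complexes are then $\Hom_R(L,-)$-acyclic shows $\Ker\psi_v^{\mathbf I}$ is $\Hom_R(\mathcal{J},-)$-acyclic, so $\Ker\psi_v^X\in\mathcal{GI}$ as well; that is, $X\in\Psi(\mathcal{GI})$.

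For the sufficiency, let $X\in\Psi(\mathcal{GI})$; I must produce an exact complex of injective representations with all cycles in $\Psi(\mathcal{GI})$ and $X$ as its degree-$0$ cycle, after which $\Hom_Q(\mathcal{J}_{rep},-)$-acyclicity is free by the first paragraph. The right half $0\to X\to E^0\to E^1\to\cdots$ is an ordinary injective coresolution: its cosyzygies stay in $\Psi(\mathcal{GI})$ because the right-hand class of the hereditary cotorsion pair of Lemma~\ref{cotorsion pair} is closed under cokernels of monomorphisms, and injective representations lie in $\Psi(\mathcal{GI})$ (for them $\psi_v$ splits and $\Ker\psi_v$ is a summand of the injective module $E(v)$, cf. Lemma~\ref{proflatfp-injective}). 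The left half is built by iterating one step: given $B\in\Psi(\mathcal{GI})$, construct a short exact sequence $0\to B'\to E\to B\to 0$ with $E$ injective and $B'\in\Psi(\mathcal{GI})$. At the level of modules each $M\in\mathcal{GI}$ admits such a sequence $0\to M'\to E_M\to M\to 0$ with $E_M$ injective and $M'\in\mathcal{GI}$ (the left end of a complete injective resolution of $M$); applying this to $B(v)$ and to $\Ker\psi_v^B$ for every $v$ and assembling the pieces into a single injective representation — by a transfinite induction along the stratification $\{W_\alpha\}$ of the right rooted quiver, exactly as in the proof of Theorem~\ref{strongly FP right} and in \cite[Theorem 3.5.1]{EHS13}, and using Lemma~\ref{close products} — produces the desired $E$ with $\Psi(\mathcal{GI})$-kernel. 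Splicing the two halves gives the complete resolution, so $X$ is Gorenstein strongly fp-injective.

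The two genuinely technical points are the $\mathcal{J}_{rep}$-coresolution of $\ell_v(L)$ in the necessity and the transfinite assembly in the sufficiency. For the former, local target-finiteness together with right rootedness forces (by K\"onig's lemma) the set of vertices reachable from $v$ to be finite, so $\ell_v(L)$ is finitely supported and can be coresolved in finitely many steps by representations $e^w_\rho(L)$, which are strongly fp-injective by Proposition~\ref{sub-quiver fp}; this finiteness is exactly where the hypothesis ``locally target-finite'' enters. The assembly step is the main obstacle: one must lift the vertexwise complete injective resolutions to a complex of injective representations while keeping every cycle inside $\Psi(\mathcal{GI})$, and this is the part that requires the right rooted induction together with the closure of $\mathcal{GI}$ under products and summands (Lemma~\ref{close products}). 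Once both are in place, the reduction of the first paragraph renders all remaining $\Hom$-exactness automatic, and the theorem follows.
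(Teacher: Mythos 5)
Your sufficiency direction (that $X\in\Psi(\mathcal{GI})$ implies $X$ is Gorenstein strongly fp-injective; the paper's Lemma~\ref{G-st-inj1}) has a genuine gap. The whole implication rests on producing, for each $B\in\Psi(\mathcal{GI})$, a short exact sequence $0\to B'\to E\to B\to 0$ with $E$ an injective representation and $B'\in\Psi(\mathcal{GI})$, and you dispose of this by ``assembling'' the vertexwise module-level sequences ``by a transfinite induction \dots exactly as in the proof of Theorem~\ref{strongly FP right} and in \cite[Theorem 3.5.1]{EHS13}''. That assembly \emph{is} the hard content of this half and is not routine: morphisms \emph{out of} the coinduced representations $e^w_\rho(-)$ are not classified by the adjunction (coinduction is a right adjoint), so the chosen epimorphisms onto $B(v)$ and $\Ker(\psi^B_v)$ do not patch into an epimorphism from an injective representation in any obvious way; one would have to redo the entire construction of \cite[Theorem 3.5.1]{EHS13} with $\mathcal{GI}$ in place of the Gorenstein injective class and check every step. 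What that theorem gives off the shelf is only that $B\in\Psi(\mathcal{GI})\subseteq\Psi(\mathcal{G})$ is Gorenstein injective as a representation, i.e.\ an exact complex $\cdots\to I_1\to I_0\to B\to 0$ with kernels in $\Psi(\mathcal{G})$ --- not in $\Psi(\mathcal{GI})$ --- and that is not enough for your first-paragraph reduction, which needs all cycles in $\Psi(\mathcal{GI})$. The paper sidesteps the construction entirely: it takes the complex with Gorenstein injective kernels $X_j$ from \cite[Theorem 3.5.1]{EHS13} and proves its $\Hom_Q(J,-)$-exactness directly by the two-variable dimension shift $\Ext^1_Q(J,X_1)\cong\Ext^2_Q(J^1,X_1)\cong\Ext^1_Q(J^1,X)=0$, where $0\to J\to E^0\to J^1\to 0$ has $E^0$ injective and $J^1$ strongly fp-injective, using $\Ext^{\geq 1}_Q(E^0,X_1)=0$ (injective into Gorenstein injective) and the cotorsion pair of Lemma~\ref{cotorsion pair}. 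Either adopt that argument or carry out the transfinite construction in full; as written, this direction is incomplete.

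Your necessity direction (the paper's Lemma~\ref{G-st-inj2}) takes a genuinely different route --- the paper uses stalk representations $s_i(F)$, the isomorphism $\Ext^1_Q(s_i(F),X)\cong\Ext^1_R(F,\Ker(\psi^X_i))$ from \cite[Proposition 3.10]{O19}, and a transfinite induction over $\{W_\alpha\}$, whereas your exact, projective-preserving left adjoint $\ell_v$ of evaluation plus the K\"onig's lemma finiteness eliminates the induction --- but it contains an error. You claim $\mathbf{I}(v)$ is $\Hom_R(\mathcal{J},-)$-acyclic ``as soon as $\Ext^1_R(L,X(v))=0$ for every $L\in\mathcal{J}$''. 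This contradicts the dimension-shift principle you yourself state: orthogonality of the degree-zero cycle controls only the right half of $\mathbf{I}(v)$ (the cosyzygies of $X(v)$); for the cycles on the left, dimension shifting runs the wrong way, and their $\Ext^1$-orthogonality to $\mathcal{J}$ is strictly stronger information. The repair is immediate with your own tools: every cycle $\Z_m\mathbf{I}$ has its injective coresolution inside the $\Hom_Q(\mathcal{J}_{rep},-)$-acyclic complex $\mathbf{I}$, so $\Ext^{\geq1}_Q(\mathcal{J}_{rep},\Z_m\mathbf{I})=0$ for all $m$, and your argument $\Ext^1_R(L,(\Z_m\mathbf{I})(v))\cong\Ext^1_Q(\ell_v(L),\Z_m\mathbf{I})=0$ then applies to every cycle, which is what the principle actually requires. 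You should also exhibit the finite $\mathcal{J}_{rep}$-coresolution of $\ell_v(L)$ rather than assert it: the two-term sequence $0\to Y\to\prod_{w}e^w_\rho(Y(w))\to\prod_{a}e^{s(a)}_\rho(Y(t(a)))\to 0$ (the sequence $\xi$ of Section 2) applied to $Y=\ell_v(L)$ does the job, its terms lying in $\mathcal{J}_{rep}$ by Proposition~\ref{sub-quiver fp}, Corollary~\ref{strongly ccor}, and the finiteness of the support of $\ell_v(L)$ that K\"onig's lemma provides. With these two repairs your necessity argument is a valid and arguably cleaner alternative to the paper's; the sufficiency direction remains the part that needs real work.
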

\begin{proof} It follows from Lemmas \ref{G-st-inj1} and \ref{G-st-inj2} below.
\end{proof}

\begin{lem} \label{G-st-inj1} Let $Q$ be a right rooted quiver, and $X$ a representation in $\text{Rep}(Q,R)$.
 If $X\in\Psi(\mathcal{GI})$, then $X$ is Gorenstein strongly fp-injective.
\end{lem}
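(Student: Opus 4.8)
The plan is to realize $X$ as a cycle of a totally acyclic complex of injective representations by gluing a right injective coresolution to a left injective resolution, both with all cycles in $\Psi(\mathcal{GI})$; the $\Hom_Q(J,-)$-acyclicity will then be automatic. First I would record two facts. First, every injective representation lies in $\Psi(\mathcal{GI})$: since $Q$ is right rooted, $\psi_v^I$ is a split epimorphism and $I(v)$ is an injective $R$-module (Lemma \ref{proflatfp-injective}(2)), so both $I(v)$ and the summand $\Ker(\psi_v^I)$ are injective, hence Gorenstein strongly fp-injective. Second, every strongly fp-injective representation $J$ lies in $\text{Rep}(Q,\mathcal{W})$: by Theorem \ref{strongly FP}(1) (whose part (1) holds for an arbitrary quiver) each $J(v)$ is a strongly fp-injective $R$-module, and since any $G\in\mathcal{GI}$ is a cycle of a $\Hom_R(\mathcal{J},-)$-acyclic exact complex of injective modules, dimension shifting gives $\Ext_R^{\geq 1}(J(v),G)=0$, i.e. $\mathcal{J}\subseteq{}^{\perp}\mathcal{GI}=\mathcal{W}$, so $J(v)\in\mathcal{W}$ for all $v$. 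Combined with the hereditary cotorsion pair $(\text{Rep}(Q,\mathcal{W}),\Psi(\mathcal{GI}))$ of Lemma \ref{cotorsion pair}, this yields $\Ext_Q^{\geq 1}(J,Y)=0$ for every strongly fp-injective $J$ and every $Y\in\Psi(\mathcal{GI})$. Hence any short exact sequence with all three terms in $\Psi(\mathcal{GI})$ stays exact after $\Hom_Q(J,-)$, and it suffices to build the two one-sided injective resolutions with cycles in $\Psi(\mathcal{GI})$.

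For the right coresolution I would use that, the cotorsion pair being hereditary, its right half $\Psi(\mathcal{GI})$ is closed under cokernels of monomorphisms. Embedding $X\in\Psi(\mathcal{GI})$ into an injective representation $I^0$ (which lies in $\Psi(\mathcal{GI})$ by the first fact above), the cokernel $X^1=I^0/X$ again lies in $\Psi(\mathcal{GI})$. Iterating gives a coresolution $0\to X\to I^0\to I^1\to\cdots$ whose cycles $X^i$ all lie in $\Psi(\mathcal{GI})$, and by the $\Ext$-vanishing above it is $\Hom_Q(J,-)$-acyclic.

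The main obstacle is the left resolution: I must produce an epimorphism $I_0\twoheadrightarrow X$ from an injective representation whose kernel again lies in $\Psi(\mathcal{GI})$, and then iterate. This cannot come from completeness of the cotorsion pair, since injective representations do not surject onto arbitrary objects; it must use the Gorenstein structure at each vertex. Here I would run a transfinite induction along the right-rooted stratification $\{W_\alpha\}$ of $Q_0$. For each $v$, the modules $X(v)$ and $\Ker(\psi_v^X)$ are Gorenstein strongly fp-injective, so each admits an epimorphism from an injective $R$-module with Gorenstein strongly fp-injective kernel, read off the left half of its complete resolution. Using Proposition \ref{rooted quiver property}(2), all arrows out of $W_{\alpha+1}$ have targets lying in strata already treated, which lets me assemble these module-level surjections into a morphism $I_0\to X$ with $I_0$ injective (so $\psi_v^{I_0}$ split epi and $I_0(v)$ injective, via Lemma \ref{proflatfp-injective}(2)), surjective at every vertex; the Snake Lemma applied to the defining squares of $\psi_v^{X}$ and $\psi_v^{I_0}$ controls $\Ker(I_0\to X)$ so that it again satisfies the defining conditions of $\Psi(\mathcal{GI})$. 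Iterating gives $\cdots\to I_1\to I_0\to X\to 0$ with cycles in $\Psi(\mathcal{GI})$, hence $\Hom_Q(J,-)$-acyclic by the same $\Ext$-vanishing.

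Finally I would splice the left resolution and the right coresolution at $X$ to obtain an acyclic complex $\cdots\to I_1\to I_0\to I^0\to I^1\to\cdots$ of injective representations with $X=\Ker(I^0\to I^1)$; by the $\Ext$-vanishing of the first paragraph the whole complex is $\Hom_Q(J,-)$-acyclic for every strongly fp-injective representation $J$, which is exactly the assertion that $X$ is Gorenstein strongly fp-injective. I expect the vertex-by-vertex assembly of the left resolution, keeping the arrow maps compatible and all kernels simultaneously inside $\Psi(\mathcal{GI})$, to be the technically delicate step.
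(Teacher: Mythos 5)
Your first two steps are correct and rigorous: the inclusion $\mathcal{J}\subseteq\mathcal{W}$ via Theorem \ref{strongly FP}(1) together with Lemma \ref{cotorsion pair} does give $\Ext_Q^{\geq 1}(J,Y)=0$ for every strongly fp-injective representation $J$ and every $Y\in\Psi(\mathcal{GI})$, and since $\Psi(\mathcal{GI})$ is the right class of a hereditary cotorsion pair it is coresolving, so your right coresolution $0\to X\to I^0\to I^1\to\cdots$ with cycles in $\Psi(\mathcal{GI})$ exists and is automatically $\Hom_Q(J,-)$-acyclic. The genuine gap is exactly where you flag it: the left resolution. Producing an epimorphism $I_0\twoheadrightarrow X$ from an injective representation whose kernel lies again in $\Psi(\mathcal{GI})$, and iterating, is the entire difficulty of the lemma, and what you give is a plan rather than an argument. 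None of the needed verifications is carried out: that the vertex-level surjections can be lifted compatibly along the arrows (this requires knowing $\Ext_R^1(E,\Ker(\psi_v^X))=0$ for $E$ injective, i.e.\ that $\mathcal{GI}$ is right orthogonal to injective modules), that the kernel representation $K$ satisfies $\psi_v^K$ epimorphic with $\Ker(\psi_v^K)\in\mathcal{GI}$, and that $K(v)\in\mathcal{GI}$ (which needs closure of $\mathcal{GI}$ under extensions and under the possibly infinite products $\prod_{a\in Q_1^{v\to\ast}}$). Your outline can in fact be completed along these lines, but doing so amounts to re-proving the hard direction of \cite[Theorem 3.5.1]{EHS13} with extra bookkeeping, and as submitted the proof is incomplete at its central point.

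What you missed is the shortcut the paper takes to avoid that construction altogether. Every Gorenstein strongly fp-injective module is Gorenstein injective, so $\Psi(\mathcal{GI})\subseteq\Psi(\mathcal{G})$, and \cite[Theorem 3.5.1]{EHS13} (for right rooted $Q$) then says that $X$ is Gorenstein injective as a representation; this hands you, for free, an exact left resolution $\cdots\to I_1\to I_0\to X\to 0$ by injective representations whose cycles $X_j$ are merely Gorenstein injective, not in $\Psi(\mathcal{GI})$. That weaker information suffices: writing $0\to J\to E^0\to J^1\to 0$ with $E^0$ injective and $J^1$ strongly fp-injective, dimension shifting gives
$$\Ext_Q^1(J,X_1)\cong\Ext_Q^2(J^1,X_1)\cong\Ext_Q^1(J^1,X)=0,$$
and inductively every cycle satisfies $\Ext_Q^{\geq 1}(J,X_j)=0$, so the left resolution is $\Hom_Q(J,-)$-acyclic; finally \cite[Lemma 17]{GI22} reduces the definition of Gorenstein strongly fp-injectivity to exactly this one-sided condition, so no splicing with a controlled right half is even needed. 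To repair your proof, either carry out your transfinite construction in full (lengthy, though feasible), or replace it by this reduction to the known characterization of Gorenstein injective representations plus dimension shifting.
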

\begin{proof} Let $X\in\Psi(\mathcal{GI})$. Then $\Ext^{i\geq 1}_Q(K, X)=0$ for all $K\in \text{Rep}(Q,\mathcal{W})$
by Lemma \ref{cotorsion pair}. Note that the subcategory of all  strongly fp-injective $R$-modules is contained in
$\mathcal{W}={^\bot\mathcal{GI}}$, and so every strongly fp-injective  representation belongs to $\text{Rep}(Q,\mathcal{W})$ by
Theorem \ref{strongly FP}(1) (where $Q$ is not necessarily locally target-finite). Particularly, $\Ext^{i\geq 1}_Q(J, X)=0$ for  any  strongly fp-injective  representation $J$.
Since any Gorenstein strongly fp-injective $R$-module is always Gorenstein injective, it follows
from \cite[Theorem 3.5.1]{EHS13} that $X$ is Gorenstein injective in $\text{Rep}(Q,R)$. Therefore, there exists an exact
sequence of injective representations $$\xymatrix{\mathbb{I}:= \ \cdots\ar[r]^{}& I_2 \ar[r]^{f_2}  & I_1 \ar[r]^{f_1} &I_0 \ar[r]^{f_0} & X \ar[r]& 0  }$$
 such that $X_{j+1}:=\Ker(f_j)$ are Gorenstein injective for all $j\geq 0$.
Let $J$ be any  strongly fp-injective  representation.  By \cite[Lemma 17]{GI22}, it suffices to show that $\mathbb{I}$ is $\Hom_Q(J, -)$-exact. To this end, decompose $\mathbb{I}$ into short exact sequences
$$\xymatrix{\mathbb{I}_j:= \ 0\ar[r]^{}& X_{j+1}\ar[r]^{}& I_j\ar[r]^{}& X_j\ar[r]^{}& 0}$$
in $\text{Rep}(Q,R)$ in which $X_0=X$. Write  a short exact sequence in $\text{Rep}(Q,R)$
$$\xymatrix{ 0\ar[r]^{}& J\ar[r]^{}& E^0\ar[r]^{}& J^1\ar[r]^{}& 0}$$
with $E^0$ injective and $J^1$ strongly fp-injective. Using dimension shifting, one has  $$\Ext^{1}_Q(J, X_1)\cong\Ext^{2}_Q(J^1, X_1)\cong\Ext^{1}_Q(J^1, X)=0.$$ This implies that the sequence $\mathbb{I}_0$  is $\Hom_Q(J, -)$-exact. Moreover, by dimension shifting, we infer that
$\Ext^{i}_Q(J, X_1)=0$ for all $i\geq 1$. Thus, we have shown that $X_1$ has the same property as $X$. Proceeding in the same manner, we can prove
that each $\mathbb{I}_j$  is $\Hom_Q(J, -)$-exact, and then so is $\mathbb{I}$, as desired.
\end{proof}

\begin{lem} \label{G-st-inj2} Let $Q$ be a {locally target-finite and} right rooted quiver.  If $X$ is a Gorenstein strongly fp-injective representation in $\text{Rep}(Q,R)$,
then $X\in\Psi(\mathcal{GI})$.
\end{lem}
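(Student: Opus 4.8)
The plan is to strip off the conditions that come for free from the Gorenstein injective theory and then reduce the genuinely new content to an $\Ext$-vanishing for the components. First I would record that $X$ is a Gorenstein injective representation: the totally acyclic complex $\mathbb{I}=(\cdots\to I_1\to I_0\to I_{-1}\to\cdots)$ of injective representations with $X=\Z_0(\mathbb{I})$ is by definition $\Hom_Q(J,-)$-exact for every strongly fp-injective $J$, and since injective representations are strongly fp-injective this includes all injective $J$. Hence $X$, and likewise each cycle $\Z_n(\mathbb{I})$, is Gorenstein injective, so by \cite[Theorem 3.5.1]{EHS13} (here $Q$ is right rooted) one gets $\Z_n(\mathbb{I})\in\Psi(\mathcal{G})$ for all $n$; in particular $\psi^X_v$ is epic and $X(v),\Ker(\psi^X_v)$ are Gorenstein injective $R$-modules for every $v$. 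Thus the only thing left is to upgrade \emph{Gorenstein injective} to \emph{Gorenstein strongly fp-injective} for the two families $X(v)$ and $\Ker(\psi^X_v)$.

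Next I would pass to module-level complexes. Evaluating at $v$ yields an exact complex $\mathbb{I}(v)$ of injective $R$-modules (Lemma \ref{proflatfp-injective}(2)) with $\Z_0(\mathbb{I}(v))=X(v)$. Because $Q$ is locally target-finite and right rooted, each $\psi^{I_n}_v$ is a split epimorphism onto the finite product $\prod_{a\in Q^{v\rightarrow\ast}_1}I_n(t(a))$, so there is a degreewise split short exact sequence of complexes of injectives $0\to\mathbb{K}\to\mathbb{I}(v)\to\prod_{a\in Q^{v\rightarrow\ast}_1}\mathbb{I}(t(a))\to0$ whose kernel complex $\mathbb{K}$ satisfies $\Z_0(\mathbb{K})=\Ker(\psi^X_v)$. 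Since every injective module is strongly fp-injective, the testing class $\mathcal{J}$ contains the injectives, so by the very definition of $\mathcal{GI}$ it suffices to prove that $\mathbb{I}(v)$ and $\mathbb{K}$ remain exact after applying $\Hom_R(J',-)$ for every strongly fp-injective module $J'$. Applying $\Hom_R(J',-)$ to the displayed sequence (which stays degreewise split) and commuting it past the finite product reduces the assertion for $\mathbb{K}$ to the same assertion for the complexes $\mathbb{I}(w)$, $w\in Q_0$. Everything therefore comes down to the single claim: for each vertex $w$ the complex $\mathbb{I}(w)$ is $\Hom_R(\mathcal{J},-)$-exact, equivalently $\Ext^1_R(J',\Z_n(\mathbb{I})(w))=0$ for all $n$ and all strongly fp-injective modules $J'$.

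This final claim is the crux and the step I expect to be the main obstacle. The tempting route is the adjunction $\Hom_R(J',Y(w))\cong\Hom_Q(L_w(J'),Y)$, where $L_w$ is the left adjoint of the evaluation functor $e^w\colon X\mapsto X(w)$; it turns the claim into exactness of $\Hom_Q(L_w(J'),\mathbb{I})$. The difficulty is that $L_w(J')$ is projective-like and is never strongly fp-injective, so the defining $\Hom_Q(\mathcal{J},-)$-exactness of $\mathbb{I}$ does not apply to it. To circumvent this I would first extract from $\mathbb{I}$ the derived vanishing $\Ext^{\geq1}_Q(J,\Z_n(\mathbb{I}))=0$ for every strongly fp-injective representation $J$, obtained by dimension shifting along the $\Hom_Q(J,-)$-exact sequences $0\to\Z_{n+1}(\mathbb{I})\to I_n\to\Z_n(\mathbb{I})\to0$. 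I would then try to bridge the first-variable condition $\Hom_R(J',\mathbb{I}(w))$ to this representation-level vanishing through the coinduction functor: by Proposition \ref{sub-quiver fp} the representation $e^w_\rho(J')$ \emph{is} strongly fp-injective, and together with the $(-)^+$-duality of Lemmas \ref{pure} and \ref{adjoint} I expect this to convert the required exactness into one tested against $e^w_\rho$-type strongly fp-injective representations, for which the recorded vanishing applies. Should a clean duality resist, the fallback is a transfinite induction along the ordinal filtration $\{W_\alpha\}$ of the right rooted quiver—using Proposition \ref{rooted quiver property}, the finiteness of $Q^{w\rightarrow\ast}_1$, and the splitting criterion of Lemma \ref{split diagram}—modelled on the proof of Theorem \ref{strongly FP right}.

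Granting the crux, $\mathbb{I}(v)$ and $\mathbb{K}$ are exact complexes of injective $R$-modules that are $\Hom_R(\mathcal{J},-)$-exact, so their degree-zero cycles $X(v)$ and $\Ker(\psi^X_v)$ lie in $\mathcal{GI}$ by definition; combined with $\psi^X_v$ being epic for every $v$, this says precisely that $X\in\Psi(\mathcal{GI})$, as desired.
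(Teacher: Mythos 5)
Your opening step and your reduction framework are fine: deducing $X\in\Psi(\mathcal{G})$ from \cite[Theorem 3.5.1]{EHS13}, forming the degreewise split exact sequence of complexes $0\to\mathbb{K}\to\mathbb{I}(v)\to\prod_{a\in Q^{v\rightarrow\ast}_1}\mathbb{I}(t(a))\to 0$, and observing that everything hinges on $\Hom_R(J',-)$-exactness of evaluated complexes all match what is needed. But the crux you flag---and leave unproven---is a genuine gap, and neither of your suggested routes closes it. The ``duality'' route fails for a structural reason: coinduction $e^w_\rho$ is a \emph{right} adjoint, so its adjunction $\Hom_Q(Y,e^w_\rho(J'))\cong\Hom_R(Y(w),J')$ places $J'$ in the \emph{second} variable, whereas your claim needs $J'$ in the \emph{first} variable against the cycles $\Z_n\mathbb{I}(w)$; knowing that $e^w_\rho(J')$ is strongly fp-injective and that $\Hom_Q(e^w_\rho(J'),\mathbb{I})$ is acyclic does not, via Lemma \ref{pure} or Lemma \ref{adjoint}, translate into acyclicity of $\Hom_R(J',\mathbb{I}(w))$. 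The fallback induction is circular given the direction of your reduction: at a successor vertex $j$, the two-out-of-three argument on the split sequence of complexes lets you pass from $\{\mathbb{K}(j),\,\prod\mathbb{I}(t(a))\}$ to $\mathbb{I}(j)$, or from $\{\mathbb{I}(j),\,\prod\mathbb{I}(t(a))\}$ to $\mathbb{K}(j)$; the induction hypothesis supplies only the product term, so you need an independent handle on one of $\mathbb{K}(j)$, $\mathbb{I}(j)$---but you reduced $\mathbb{K}(j)$ to the vertex complexes, including $\mathbb{I}(j)$ itself.

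The missing idea, which is the paper's key move, is to settle the kernel complexes \emph{first}, using stalk representations rather than coinduced ones in the first variable. For a strongly fp-injective module $F$ there is an exact sequence $0\to s_i(F)\to e^i_\rho(F)\to e^i_\rho(F)/s_i(F)\to 0$, whose quotient is $\prod_{a\in Q^{\ast\rightarrow i}_1}e^{s(a)}_\rho(F)$; by Proposition \ref{sub-quiver fp} and Corollary \ref{strongly ccor} both the middle term and the quotient are strongly fp-injective representations, so $\Hom_Q(e^i_\rho(F),\mathbb{I})$ and $\Hom_Q(e^i_\rho(F)/s_i(F),\mathbb{I})$ are acyclic; since each $\Hom_Q(-,I_n)$ keeps the sequence short exact ($I_n$ injective), $\Hom_Q(s_i(F),\mathbb{I})$ is acyclic too. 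Then the isomorphism $\Ext^1_Q(s_i(F),Y)\cong\Ext^1_R(F,\Ker(\psi^Y_i))$ of \cite[Proposition 3.10]{O19}, applicable because $\psi_i$ is epic on every cycle, gives $\Ext^1_R(F,\Ker(\psi^{\Z_m\mathbb{I}}_i))=0$ for all $m$; that is, $\mathbb{K}(i)$ is $\Hom_R(\mathcal{J},-)$-exact and $\Ker(\psi^X_i)\in\mathcal{GI}$, with no induction needed. Only then does the transfinite induction over $\{W_\alpha\}$ close: at a successor vertex the product term is Hom-exact by the induction hypothesis together with Lemma \ref{close products}, the kernel term by the stalk argument, hence $\mathbb{I}(j)$ is Hom-exact and $X(j)\in\mathcal{GI}$. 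In short, your reduction runs in the wrong direction: the kernels must be handled before, not after, the vertex components.
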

\begin{proof}
By definition, there is an epimorphism $f: I\rightarrow X$ with $I$ an injective representation. This yields the following commutative diagram for any $v\in Q_0$:
 $$\xymatrix{
   I(v) \ar[d]_{\psi^I_v} \ar[r]^{f(v)} & X(v) \ar[d]_{\psi^X_v} \ar[r]^{} &0  \\
   \prod_{a\in Q^{v\rightarrow\ast}_1}I(t(a)) \ar[r]^{} & \prod_{a\in Q^{v\rightarrow\ast}_1}X(t(a)) \ar[r]^{} & 0.  }
 $$
Since $I$ is injective, it follows that $\psi^I_v$ is a splitting  epimorphism,  and hence $\psi^X_v$ is an epimorphism.

Now assume that the acyclic complex of injective representations
$$\mathbb{I}:= \ \xymatrix{\cdots\ar[r]^{}&  I_{1} \ar[r]^{f_{1}} &I_0 \ar[r]^{f_0} & I_{-1} \ar[r]^{f_{-1}}& \cdots  }$$
with $X=\Ker(f_{0})=\Z_0\mathbb{I}$  remains exact after applying $\Hom_{Q}(J,-)$ for any strongly fp-injective representation $J$.
Given a strongly fp-injective $R$-module $F$, since the class of all  strongly fp-injective representations in $\text{Rep}(Q,R)$ is closed under direct
products, see Corollary \ref{strongly ccor}, it follows from Proposition \ref{sub-quiver fp} that
$e_{\rho}^i(F)$ and $ e_{\rho}^i(F)/s_i(F)=\prod_{a\in Q_1^{*\to i}}e_{\rho}^{(s(a))}(F)$ are strongly fp-injective representations
 for any vertex $i\in Q_0$, and so  both the complexes $\Hom_Q( e_{\rho}^i(F), \mathbb{I})$ and  $\Hom_Q( e_{\rho}^i(F)/s_i(F), \mathbb{I})$ are acyclic. Thus the exactness of the sequence
$$\xymatrix{0\ar[r]^{} & s_i(F) \ar[r]^{} &e_{\rho}^i(F) \ar[r]^{} & e_{\rho}^i(F)/s_i(F) \ar[r]^{} & 0  }$$
implies that the complex $\Hom_Q(s_i(F), \mathbb{I})$ is acyclic. As we have proved above,  $\psi^X_i$ is  epimorphic, it follows from \cite[Proposition 3.10]{O19} that $$\Ext^1_R(F, \Ker(\psi^X_i))\cong \Ext^1_Q(s_i(F), X)=0.$$ Similarly, one can show
that  $\psi^{{\Z}_m\mathbb{I}}_i$ is epimorphic and $$\Ext^1_R(F, \Ker(\psi^{\Z_m\mathbb{I}}_i))\cong \Ext^1_Q(s_i(F), {\Z_m\mathbb{I}})=0$$ for any integer $m\in \mathbb{Z}$, where $\Z_m\mathbb{I}=\Ker(f_m)$. Now we infer readily that $\Ker(\psi^X_i)=\Z_0\mathbb{K}(i)=\Ker(h_0)$ is a Gorenstein strongly fp-injective $R$-mdoule, where
$$\mathbb{K}(i):= \ \xymatrix{\cdots\ar[r]^{}&  \Ker(\psi^{I_{1}}_i) \ar[r]^{h_1} &\Ker(\psi^{I_{0}}_i) \ar[r]^{h_0} & \Ker(\psi^{I_{-1}}_i) \ar[r]^{\hspace{4mm}h_{-1}}& \cdots  },$$
see the diagram (4.1) below in which one should take $j=i$.

In the next, we will prove that  $X(i)$ is a Gorenstein strongly fp-injective $R$-module for any vertex $i\in Q_0$.
As $Q$ is right rooted, there  exists some ordinal $\lambda$ such that $Q_0=W_\lambda=\bigcup_{\alpha\leq\lambda}W_\alpha$, see \cite[2.9]{HP19}.
We will show the result by transfinite induction on $\lambda$.
The induction start for $\lambda=0$ is nothing to prove since $W_0$ is empty. For $\lambda=1$, we have $Q$ discrete, that is, there are no arrows in $Q_1$. Thus, the complex $\Hom_R(F, \mathbb{I}(i))\cong\Hom_Q(s_i(F), \mathbb{I})$ is acyclic. Hence $X(i)$ is
a Gorenstein strongly fp-injective $R$-module since $X(i)=\Ker(f_{0}(i))=\Z_0\mathbb{I}(i)$.

Now we assume that $\lambda>1$ is a limit ordinal and  that
$X(i)$ is  Gorenstein strongly fp-injective for each vertex $i\in W_\delta$ and all $\delta<\lambda$. We will prove that  $X(j)$ is  Gorenstein strongly fp-injective for any given $j\in W_\lambda$. However, this is clear since we have $W_\lambda=\bigcup_{\delta<\lambda}W_\delta$ by our assumption that $\lambda$ is a limit ordinal.

It remains to consider the situation where $\lambda=\delta+1$ is a successor ordinal. We assume that
$X(i)$ is a Gorenstein strongly fp-injective $R$-module for each $i\in W_\delta\subseteq W_{\delta+1}$, and we must show that $X(j)$ is  Gorenstein strongly fp-injective for any $j\in W_{\delta+1}$.
Without loss of generality, we may assume that $j\in W_{\delta+1}\setminus \bigcup_{\alpha\leq \delta}W_\alpha$.
Combining
$$W_{\delta+1}=\left \{j\in Q_0 \hspace{0.2cm}
\begin{array}{ |l} j \text{ is not the source of any arrow } \\
 a \text{ in }  Q \text{\ with \ } t(a)\not\in\bigcup_{\alpha\leq \delta}W_\alpha.\end{array}\right\},
$$
we consider the  exact sequence $$\xymatrix{
0 \ar[r]^{} &\Ker(\psi^{X}_j) \ar[r]^{}& X(j) \ar[r]^{\hspace{-10mm}\psi^{X}_j} &\prod_{a\in Q^{j\rightarrow\ast}_1}X(t(a))\ar[r]^{}& 0.}$$
Since $j\in W_{\delta+1}\setminus \bigcup_{\alpha\leq \delta}W_\alpha$, it follows that  $t(a)\in\bigcup_{\alpha\leq \delta}W_\alpha$ for all arrows $a\in Q^{j\rightarrow\ast}_1$, and so all
$X(t(a))$ are Gorenstein strongly fp-injective  $R$-modules by the assumption. Moreover, by Lemma \ref{close products}, one has $\prod_{a\in Q^{j\rightarrow\ast}_1}X(t(a))$  Gorenstein strongly fp-injective. Combining the Gorenstein strongly fp-injectivity of $\Ker(\psi^{X}_j)$ which has been shown before, we obtain that both the acyclic complexes $\mathbb{K}(j)$ and $\prod\mathbb{I}(t(a))$ of injective $R$-modules,
in the following commutative diagram, are $\Hom_R(J, -)$-exact
for any strongly fp-injective $R$-module $J$.
$$\xymatrix@C=3mm@R=8mm{    & 0 \ar[d]_{}             &&  0\ar[d]_{}           & &0 \ar[d]^{}&  & \\
\mathbb{K}(j):= \ \cdots \ar[r]^{} & \Ker(\psi^{I^{-1}}_j) \ar[d]_{} \ar[rr]^{} && \Ker(\psi^{I^{0}}_j) \ar[d]_{} \ar[rr]^{} & &\Ker(\psi^{I^{1}}_j) \ar[r]\ar[d]^{}&\cdots & \\
\mathbb{I}(j):= \ \cdots \ar[r]^{} & I^{-1}(j)\ar[d]_{\psi^{I^{-1}}_j} \ar[rr]^{f^{-1}(j)} && I^{0}(j) \ar[d]_{\psi^{I^{0}}_j} \ar[rr]^{f^{0}(j)} & &I^{1}(j) \ar[r]\ar[d]_{\psi^{I^{1}}_j}&\cdots & {(4.1)}    \\
\prod\mathbb{I}(t(a)):= \ \cdots \ar[r]^{} & \displaystyle\prod_{a\in Q^{j\rightarrow\ast}_1}\hspace{-3mm}I^{-1}(t(a)) \ar[d]_{} \ar[rr]^{} &&
  \displaystyle\prod_{a\in Q^{j\rightarrow\ast}_1}\hspace{-3mm}I^{0}(t(a)) \ar[d]_{} \ar[rr]^{} & &\displaystyle\prod_{a\in Q^{j\rightarrow\ast}_1}\hspace{-3mm}I^{1}(t(a)) \ar[r]\ar[d] &\cdots & \\
     & 0   & &0   & &0 &  }$$
By applying the functor $\Hom_R(J,-)$ with $J$ strongly fp-injective, we then get an exact sequence of complexes
$$\xymatrix{
0 \ar[r]^{} &\Hom_R(J, \mathbb{K}(j)) \ar[r]^{}& \Hom_R(J, \mathbb{I}(j)) \ar[r]^{} &\Hom_R(J,\prod\mathbb{I}(t(a)))\ar[r]^{}& 0.}$$
Now it is easily seen that  the complex $\Hom_R(J, \mathbb{I}(j))$ is acyclic. This implies that
$X(j)$ is a Gorenstein strongly fp-injective $R$-module. Hence, by induction, we can show that $X(i)$ is a Gorenstein strongly fp-injective  $R$-module
for every $i\in Q_0$. In other words, $X\in\Psi(\mathcal{GI})$.
\end{proof}

Dalezios shows in \cite[Proposition 4.6 (2)]{D19} that a representation $X$ in $\text{Rep}(Q, R)$ of a right rooted  quiver $Q$  is Ding injective  if and only if  the canonical homomorphism $\psi_i^X$ is an epimorphism, and the left $R$-modules $\Ker(\psi_i^X)$ and $X(i)$ are Ding injective for each vertex  $i\in Q_0$, provided that $R$ is left coherent. We would like to point out that  Theorem \ref{main Gorens}
reveals incidentally the characterization of Ding injective  representations of $R$-modules when  $R$ is restricting to a coherent ring as the following.

\begin{cor} \label{Ding injective} Let $Q$ be a {locally target-finite and} right rooted quiver, and $R$  a left coherent ring.
Then a  representation $X$ in $\text{Rep}(Q, R)$ is Ding injective  if and only if it is Gorenstein  strongly fp-injective, that is, $X\in \Psi(\mathcal{GI})$
\end{cor}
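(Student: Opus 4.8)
The plan is to reduce the statement to the two characterizations already established, together with the module-level collapse of fp-injectivity onto strong fp-injectivity over a coherent ring. The key observation I would establish first is that, under the hypotheses on $Q$ and $R$, the class of fp-injective representations of $Q$ coincides with the class of strongly fp-injective representations. To see this I would combine three ingredients: Lemma \ref{proflatfp-injective}(3), which for a right rooted quiver over a left coherent ring says that $X$ is fp-injective if and only if each $\psi_i^X$ is a pure epimorphism and each $X(i)$ is an fp-injective $R$-module; Theorem \ref{strongly FP right}, which for a locally target-finite and right rooted quiver says that $X$ is strongly fp-injective if and only if each $\psi_i^X$ is a pure epimorphism and each $X(i)$ is a strongly fp-injective $R$-module; and the recalled fact that $R$ is left coherent precisely when every fp-injective $R$-module is strongly fp-injective. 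Since the hypotheses of both statements are granted by the corollary (right rooted and left coherent for the former, locally target-finite and right rooted for the latter), the only difference between the two characterizations is the condition imposed on the components $X(i)$, and over a left coherent ring these two conditions agree. Hence the two classes of representations coincide.

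With this coincidence in hand, I would compare the two definitions directly. Both a Ding injective representation and a Gorenstein strongly fp-injective representation are defined by the existence of an acyclic complex of injective representations with $X$ as a cycle that remains exact after applying $\Hom_Q(J,-)$; the sole distinction is that the test objects $J$ range over the fp-injective representations in the first case and over the strongly fp-injective representations in the second. As the first step shows these two classes of test objects are identical, the two notions coincide verbatim, so $X$ is Ding injective if and only if $X$ is Gorenstein strongly fp-injective.

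Finally, I would invoke Theorem \ref{main Gorens}, which for a locally target-finite and right rooted quiver identifies the Gorenstein strongly fp-injective representations with the class $\Psi(\mathcal{GI})$, thereby supplying the concluding clause $X\in\Psi(\mathcal{GI})$. The only substantive point is the first step, the collapse of the fp-injective representation class onto the strongly fp-injective one; the remaining steps are a verbatim comparison of definitions and a citation of Theorem \ref{main Gorens}. The mild care required is simply to confirm that the hypotheses of both Lemma \ref{proflatfp-injective}(3) and Theorem \ref{strongly FP right} are met, which they are by the hypotheses of the corollary.
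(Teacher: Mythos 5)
Your proposal is correct and follows essentially the same route as the paper: the paper likewise first collapses fp-injective onto strongly fp-injective at the module level via coherence, lifts this to representations by combining Lemma \ref{proflatfp-injective}(3) with Theorem \ref{strongly FP right}, and then identifies the Ding injective representations with $\Psi(\mathcal{GI})$ (the paper additionally cites Dalezios's characterization, but this plays the same role as your verbatim comparison of the two definitions together with Theorem \ref{main Gorens}).
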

\begin{proof} Since $R$ is left coherent, it follows  by \cite[Theorem 4.2]{LGO} that an $R$-module is fp-injective if and only if it is strongly fp-injective, and hence by Lemma \ref{proflatfp-injective} (3) and Theorem \ref{strongly FP right}, one gets that a representation is fp-injective  if and only if it is  strongly fp-injective. Thus, the subcategory $\Psi(\mathcal{GI})$ is exactly the one of Ding injective representations, see
  \cite[Proposition 4.6]{D19}.
\end{proof}

Note that if the ring $R$ is left coherent, then absolutely clean left $R$-modules  and fp-injective left $R$-modules coincide. We end the paper by the next result, which improves the model structure in \cite[Proposition 5.10]{BGH}  from the category $R$-Mod of $R$-modules to the category
$\text{Rep}(Q, R)$ of representations of $R$-modules when $R$ is a left coherent ring. It  also improves \cite[Theorem 4.9]{D19} from a Ding-Chen ring to a left coherent ring.

\begin{cor} \label{Ding injective Model} Let $Q$ be a {locally target-finite and} right rooted quiver,  $R$  a left coherent ring,  and $\mathcal{GI}$ be the class of all Ding injective  $R$-modules. Then
there is a hereditary Hovey triple $(\text{Rep}(Q, R), \text{Rep}(Q,\mathcal{W}), \Psi(\mathcal{GI}))$ on the
category $\text{Rep}(Q,R)$ of quiver representations of left $R$-modules. The homotopy
category of this model structure $$\text{Ho}(\text{Rep}(Q,R))\cong\underline{\Psi(\mathcal{GI})},$$ is the stable category of Ding injective representations.
\end{cor}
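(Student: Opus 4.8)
The plan is to realize the asserted Hovey triple as the \emph{injective cotorsion pair} attached to the cotorsion pair produced in Lemma~\ref{cotorsion pair}. To give a hereditary abelian model structure in which every object is cofibrant, the trivial objects are $\text{Rep}(Q,\mathcal{W})$, and the fibrant objects are $\Psi(\mathcal{GI})$, it suffices, by the Hovey correspondence together with the theory of injective cotorsion pairs (see \cite{GI22,BGH} and the references therein), to check three things: that $(\text{Rep}(Q,\mathcal{W}),\Psi(\mathcal{GI}))$ is a complete hereditary cotorsion pair, that $\text{Rep}(Q,\mathcal{W})$ is thick, and that the core $\text{Rep}(Q,\mathcal{W})\cap\Psi(\mathcal{GI})$ coincides with the class of injective representations. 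The first two are already furnished by Lemma~\ref{cotorsion pair} (available since $Q$ is right rooted), so all the genuine work is concentrated in identifying the core; once this is done, $(\text{Rep}(Q,R),\text{Rep}(Q,\mathcal{W})\cap\Psi(\mathcal{GI}))$ becomes the trivial complete cotorsion pair formed by $\text{Rep}(Q,R)$ and the injective representations, which together with the pair of Lemma~\ref{cotorsion pair} assembles into the desired Hovey triple.

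For the core computation I would argue as follows. On the module side the pair $(\mathcal{W},\mathcal{GI})$ has core $\mathcal{W}\cap\mathcal{GI}$ equal to the injective $R$-modules, this being exactly what underlies the module model structure (see \cite{GI22,BGH}). Now let $X\in\text{Rep}(Q,\mathcal{W})\cap\Psi(\mathcal{GI})$. For every vertex $v$ the component condition gives $X(v)\in\mathcal{W}$, while membership in $\Psi(\mathcal{GI})$ gives $X(v)\in\mathcal{GI}$; hence each $X(v)$ lies in $\mathcal{W}\cap\mathcal{GI}$ and is therefore an injective $R$-module. In particular every target component $X(t(a))$ is injective, so $\prod_{a\in Q_1^{v\to *}}X(t(a))$ is injective and thus lies in $\mathcal{W}$. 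Applying thickness of $\text{Rep}(Q,\mathcal{W})$ at the component level to the exact sequence $0\to\Ker(\psi_v^X)\to X(v)\to\prod_{a\in Q_1^{v\to *}}X(t(a))\to 0$, whose two non-kernel terms lie in $\mathcal{W}$, forces $\Ker(\psi_v^X)\in\mathcal{W}$; combined with $\Ker(\psi_v^X)\in\mathcal{GI}$ coming from $\Psi(\mathcal{GI})$, we obtain $\Ker(\psi_v^X)\in\mathcal{W}\cap\mathcal{GI}$, i.e.\ $\Ker(\psi_v^X)$ is injective. This makes the displayed sequence split, so $\psi_v^X$ is a splitting epimorphism with injective component $X(v)$; since $Q$ is right rooted, Lemma~\ref{proflatfp-injective}(2) then yields that $X$ is an injective representation. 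The reverse inclusion is immediate: an injective representation $X$ has each $X(v)$ injective (hence in $\mathcal{W}\cap\mathcal{GI}$) and $\psi_v^X$ a splitting epimorphism, so $\Ker(\psi_v^X)$ is a summand of the injective $X(v)$ and lies in $\mathcal{GI}$, whence $X\in\text{Rep}(Q,\mathcal{W})\cap\Psi(\mathcal{GI})$.

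With the core identified, the Hovey correspondence delivers the hereditary Hovey triple $(\text{Rep}(Q,R),\text{Rep}(Q,\mathcal{W}),\Psi(\mathcal{GI}))$. For the homotopy category I would invoke the standard description $\text{Ho}(\text{Rep}(Q,R))\cong\underline{\mathcal{C}\cap\mathcal{F}}$ of an abelian model structure, where here $\mathcal{C}\cap\mathcal{F}=\text{Rep}(Q,R)\cap\Psi(\mathcal{GI})=\Psi(\mathcal{GI})$ (all objects being cofibrant) and the stable category is taken modulo the morphisms factoring through the injective representations, i.e.\ through the core just computed. Finally, since $R$ is left coherent, Corollary~\ref{Ding injective} identifies $\Psi(\mathcal{GI})$ with the class of Ding injective representations, so $\underline{\Psi(\mathcal{GI})}$ is precisely the stable category of Ding injective representations, as claimed. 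I expect the main obstacle to be the core computation of the middle paragraph, specifically securing $\Ker(\psi_v^X)\in\mathcal{W}$ via thickness together with the injectivity of the product $\prod_{a\in Q_1^{v\to *}}X(t(a))$, and then passing from the splitting of the defining sequence of $\psi_v^X$ to the injectivity of $X$ through right-rootedness; everything else is a formal application of the cotorsion-pair-to-model-structure machinery.
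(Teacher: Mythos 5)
Your proposal has a genuine gap, and it sits exactly where you claim there is no work to do. You assert that the completeness of the cotorsion pair $(\text{Rep}(Q,\mathcal{W}),\Psi(\mathcal{GI}))$ is ``already furnished by Lemma \ref{cotorsion pair}.'' It is not: that lemma only asserts a \emph{hereditary} cotorsion pair with $\text{Rep}(Q,\mathcal{W})$ thick, and completeness is nowhere claimed there (nor in its source, \cite[Proposition 24]{GI22}, which over an arbitrary ring gives a cotorsion pair but not a complete one). Hovey's correspondence requires both cotorsion pairs of the triple to be complete, so without this ingredient your argument cannot get off the ground. Establishing completeness is in fact the entire content of the paper's proof, and it is precisely where the left coherence of $R$ enters: coherence gives, via \cite[Proposition 5.10]{BGH}, that the module-level pair $(\mathcal{W},\mathcal{GI})$ is cogenerated by a set and hence complete; \cite[Proposition 23]{GI22} upgrades this to a hereditary \emph{perfect} cotorsion pair with $\mathcal{W}$ thick; and then \cite[Theorem B]{HP19} together with \cite[Theorem 4.6]{O19} --- the latter being exactly a result about completeness of induced cotorsion pairs in representation categories --- lift perfectness, heredity and thickness to $(\text{Rep}(Q,\mathcal{W}),\Psi(\mathcal{GI}))$. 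In your write-up, coherence is invoked only at the very end, to identify $\Psi(\mathcal{GI})$ with the Ding injective representations via Corollary \ref{Ding injective}; this misses that coherence is what makes the model structure exist at all.

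By contrast, the part you single out as ``the genuine work'' --- showing that the core $\text{Rep}(Q,\mathcal{W})\cap\Psi(\mathcal{GI})$ is the class of injective representations --- is carried out correctly in your middle paragraph (componentwise reduction to $\mathcal{W}\cap\mathcal{GI}=$ injectives, thickness of $\mathcal{W}$ applied to $0\to\Ker(\psi_v^X)\to X(v)\to\prod_{a\in Q_1^{v\to\ast}}X(t(a))\to 0$, splitting, then Lemma \ref{proflatfp-injective}(2) and right-rootedness), but this is the routine step: it is the standard verification that a complete hereditary cotorsion pair with thick left class containing the injectives is an injective cotorsion pair, which is why the paper does not spell it out. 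So the overall architecture of your proof (injective cotorsion pair $\Rightarrow$ Hovey triple $\Rightarrow$ homotopy category as the stable category of fibrant objects) is sound, but you must replace the appeal to Lemma \ref{cotorsion pair} by the coherence-based completeness argument sketched above before the Hovey machinery can be applied.
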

\begin{proof}
Since $R$ is a left coherent ring, it follows from \cite[Proposition 5.10]{BGH} that the cotorsion pair $(\mathcal{W},\mathcal{GI})$ is
 set-cogenerated, and so it is complete. Moreover, one infers from \cite[Proposition 23]{GI22} that the pair $(\mathcal{W},\mathcal{GI})$ is a hereditary and perfect cotorsion pair with $\mathcal{W}$ thick in the category of $R$-modules. Thus it  follows from \cite[Theorem B]{HP19} and \cite[Theorem 4.6]{O19} that the pair $(\text{Rep}(Q,\mathcal{W}),\Psi(\mathcal{GI}))$
 is a hereditary and perfect cotorsion pair with $\text{Rep}(Q,\mathcal{W})$ thick in $\text{Rep}(Q,R)$.
\end{proof}

\section*{Acknowledgements}
The authors thank the referee for his/her carefully reading and considerable suggestions,
which have improved the present paper. The authors thank professors Li Liang and Zhenxing Di for helpful
discussions related to this work. The work is partly supported by NSF of China (Grant Nos. 12161049; 12361008), and Funds for Innovative Fundamental Research Group Project of Gansu Province (Grant No. 23JRRA684).

\vskip 18pt

{\footnotesize \noindent Qihui Li,\\
 Department of Mathematics, Lanzhou Jiaotong University, Lanzhou 730070, P.R. China\\
 E-mail: 1319241167@qq.com}

\vskip 15pt

 {\footnotesize \noindent Junpeng Wang,\\
 Department of Mathematics,  Northwest Normal University,  Lanzhou {\rm 730070}, P.
R. China\\ E-mail: wangjunpeng1218@163.com}

\vskip 15pt
 {\footnotesize \noindent Gang Yang,\\
1. Department of Mathematics, Lanzhou Jiaotong University, Lanzhou 730070, P.R. China\\
2. Gansu Provincial Research Center for Basic Disciplines of Mathematics and Statistics,
Lanzhou 730070, P. R. China\\
 E-mail: yanggang$\symbol{64}$mail.lzjtu.cn}

\end{document}